\newcommand{\rst}{\restrictedto}
\DeclareMathOperator\eo{eo}
\DeclareMathOperator\ooee{oe}
\newcommand\sig{\operatorname{sig}}
\newcommand\grad{\operatorname{grad}}
\newcommand\spnc{\operatorname{Spin}_{\bbC}}
\newcommand\Tr{\operatorname{tr}}
\newcommand\cT{\mathcal T}
\newcommand\cU{\mathcal U}
\newcommand\cR{\mathcal R}
\newcommand\cP{\mathcal P}
\newcommand\bc{\boldsymbol c}
\newcommand\tcP{\widetilde{\cP}}
\newcommand\tcR{\widetilde{\cR}}
\newcommand\tcS{\widetilde{\cS}}
\newcommand\tL{\widetilde{L}}
\newcommand\cE{\mathcal E}
\newcommand\bX{\overline{X}}
\newcommand\bY{\overline{Y}}
\newcommand\dbar{\bar{\pa}}
\newcommand\dbnc{\dbar\rho\rfloor}
\newcommand\teta{\widetilde{\eta}}
\newcommand\tX{\widetilde{X}}
\newcommand\hX{\widehat{X}}
\newcommand\hQ{\widehat{Q}}
\newcommand\heth{\widehat{\eth}}
\newcommand{\Spn}{S\mspace{-10mu}/ }
\newcommand\hSpn{\widehat{\Spn}}
\newcommand\Ker{\operatorname{ker}}
\newcommand\cC{\mathcal{C}}
\newcommand\cL{\mathcal{L}}
\newcommand\cS{\mathcal{S}}
\newcommand\tsigma{\tilde\sigma}
\newcommand\bbC{\mathbb C}
\newcommand\bbN{\mathbb N}
\newcommand\bbP{\mathbb P}
\newcommand\bbR{\mathbb R}
\newcommand\pa{\partial}
\newcommand\restrictedto{\upharpoonright}
\newcommand\ins[1]{{#1}^{\circ}}
\newcommand\CI{{\mathcal C}^{\infty}}
\newcommand\CIc{{\mathcal C}^{\infty}_{\text{c}}}
\newcommand\CmIc{{\mathcal C}^{-\infty}_{\text{c}}}
\newcommand\Id{\operatorname{Id}}
\DeclareMathOperator{\range}{range}
\DeclareMathOperator{\odd}{o}
\DeclareMathOperator{\even}{e}
\DeclareMathOperator{\Ind}{Ind}
\DeclareMathOperator{\Rind}{R-Ind}
\newtheorem{theorem}{Theorem}
\newtheorem{proposition}{Proposition}
\newtheorem{corollary}{Corollary}
\theoremstyle{definition}
\newtheorem{definition}{Definition}
\theoremstyle{remark}
\newtheorem{remark}{Remark}
\begin{document}

\title{Cobordism, Relative Indices and\\
Stein Fillings} 

\author{Charles L. Epstein\footnote{Keywords: $\spnc$ Dirac operator, index
formula, subelliptic boundary value problem, modified $\dbar$-Neumann
condition, almost complex manifolds, contact manifold, relative index
conjecture, Bojarski's theorem, tame
Fredholm pairs.  E-mail: cle@math.upenn.edu\newline
This material is based upon work supported by the National
Science Foundation under Grant No. 0603973, and the Francis J. Carey term
chair. Any opinions, findings, and conclusions or recommendations expressed in
this material are those of the author and do not necessarily reflect the
views of the National Science Foundation.} \\
Department of Mathematics\\ University of Pennsylvania}

\date{Draft: May 8, 2007}

\maketitle
\centerline{\it Dedicated with gratitude and admiration to Gennadi Henkin}
\centerline{\it on the occasion of his 65th birthday.}

\begin{abstract}  In this paper we build on the framework developed 
  in~\cite{Epstein4,Epstein3, Epstein44} to obtain a more complete
  understanding of the gluing properties for indices of boundary value problems
  for the $\spnc$-Dirac operator with sub-elliptic boundary conditions. We
  extend our analytic results for sub-elliptic boundary value problems for the
  $\spnc$-Dirac operator, and gluing results for the indices of these boundary
  problems to $\spnc$-manifolds with several pseudoconvex (pseudoconcave)
  boundary components. These results are applied to study Stein fillability for
  compact, 3-dimensional, contact manifolds.
\end{abstract}

\section*{Introduction}
In several earlier papers we analyzed Fredholm boundary value problems for the
$\spnc$-Dirac operator defined by modifying the $\dbar$-Neumann boundary
condition. To apply this analysis we require a compact, $2n$-dimensional,
$\spnc$-manifold, $X,$ with contact boundary, $Y.$ The $\spnc$-structure must
be defined in a neighborhood, $U,$ of $bX$ by an almost complex structure, $J,$
see~\cite{GKK}.  The contact structure on $Y$ is assumed to be compatible with
the almost complex structure in a sense explained below. In our earlier work we
assume that the boundary $Y$ is a connected manifold. In this paper we extend
the analytic results for sub-elliptic boundary value problems to manifolds with
several boundary components, some pseudoconvex and some pseudoconcave. These
results are then applied the prove various extensions, to the sub-elliptic
case, of Bojarski's gluing formul{\ae} for indices of Dirac operators. Finally
these results are applied, along with the classical excision theorem for
indices of Gromov and Lawson, to study the set of embeddable structures on a
3d-CR manifold.

The almost complex structure, $J,$ defines a splitting of
$TX\otimes\bbC\restrictedto_{U}$ into complementary subbundles
\begin{equation}
TX\otimes\bbC\restrictedto_{U}=T^{1,0}X\oplus T^{0,1}X,
\end{equation}
the dual splitting of $T^*X\otimes\bbC$ is denoted by $\Lambda^{1,0}X\oplus
\Lambda^{0,1}X.$ Though these bundles are only defined in the subset of $X$
where $J$ is defined; to avoid introducing excessive notation, we denote them
by $\Lambda^{1,0}X,$ etc.  This splitting leads to the definition of the
$\dbar$-operator:
\begin{equation}
\dbar f=df\restrictedto_{T^{0,1}X};
\end{equation}
$\dbar f$ is a section of $\Lambda^{0,1}X.$

For each $0\leq p,q\leq n,$ we let $\Lambda^{p,q}$ denote the bundle of forms
of type $(p,q)$ defined by the almost complex structure.  If $\Spn$ denotes the
bundle of complex spinors over $X,$ then over $U$ we have the identification:
\begin{equation}
\Spn\restrictedto_{U}=\bigoplus\limits_{q=0}^n\Lambda^{0,q}X\restrictedto_{U}.
\label{eqn1}
\end{equation}
For each $q,$ the $\dbar$-operator extends to define a map
\begin{equation}
\dbar:\CI(U;\Lambda^{p,q}X)\longrightarrow\CI(U;\Lambda^{p,q+1}X).
\end{equation}
We select an Hermitian metric $g$ on $T^{1,0}X,$ this defines a formal adjoint
$\dbar^*.$ Using the identification in~\eqref{eqn1}, the $\spnc$-Dirac
operator, $\eth$ can be expressed,  over $U,$  as
\begin{equation}
\eth=\dbar+\dbar^*+\cE,
\end{equation}
where $\cE:\Spn\to\Spn$ is a bundle endomorphism.

In this paper, we generally regard manifolds with boundary as closed, so that,
for example, $\rho\in\CI(X)$ means that $\rho$ is smooth up to, and including
the boundary. The notation $\bX$ is used to denote the oriented manifold $X$
with its orientation reversed.

Let $\rho\in\CI(X)$ be a defining function for $bX:$ $X=\{x\in
X:\: \rho(x)<0\},$ $d\rho$ is non-vanishing along $bX.$ The Hermitian
metric on $T^{1,0}X$ defines the interior product operation
\begin{equation}
\dbar\rho\rfloor :\Lambda^{p,q}X\longrightarrow \Lambda^{p,q-1}X.
\end{equation}
The classical $\dbar$-Neumann condition for  sections
$\sigma^{p,q}\in\CI(X;\Lambda^{p,q}X),$ is the requirement
that
\begin{equation}
\dbar\rho\rfloor\sigma^{p,q}\restrictedto_{bX}=0.
\end{equation}

The boundary of $X$ is assumed to be a contact manifold. The contact structure
is compatible with $J$ in that the hyperplane field $H$ on $Y$ is the
null-space of the real 1-form
\begin{equation}
\theta=i\dbar\rho\restrictedto_{TY}.
\label{eqn8}
\end{equation}
In order for our analytic results to apply, the boundary of $X$ must satisfy
one of several
convexity properties, which are described by the signature of the
Levi-form,
\begin{equation}
\cL_y(X,Y)=\frac{1}{2}\left[d\theta_y(X,JY)+d\theta_y(Y,JX)\right],\text{ for }X\in H_y.
\end{equation}
A boundary point $y$ is strictly pseudoconvex if $\cL_y$ is positive definite,
and strictly pseudoconcave if $\cL_y$ is negative definite. Let $Y_j$ be a
connected component of $Y;$ if $\cL_y>0$ ($\cL_y<0$) for all $y\in Y_j$ then we
say that $Y_j$ is strictly pseudoconvex (pseudoconcave). In our earlier papers
we showed how to modify the $\dbar$-Neumann condition to obtain a sub-elliptic
boundary condition provided that each boundary component of $X$ is either
strictly pseudoconvex or strictly pseudoconcave. In fact our approach applies
so long as $\cL_y$ is non-degenerate at every boundary point. The
modifications to the $\dbar$-Neumann condition, needed to define a
sub-elliptic boundary value problem, depend on the signature of $\cL.$ In this
paper we again focus on boundaries that are either pseudoconvex or pseudoconcave.

In the integrable, strictly pseudoconvex case the reason that the
$\dbar$-Neumann condition itself does not define a Fredholm operator for $\eth$
is that $\dbar$ has an infinite dimensional null-space in degree 0, i.e. the
holomorphic functions. The reason is simply that
$\dbar\rho\rfloor\sigma^{0,0}\restrictedto_{bX}=0$ is always satisfied for a
$(0,0)$-form. To correct this we need to change the boundary condition in
degree 0. In the classical case there is an orthogonal projector, $\cS$ defined
on $\CI(bX),$ whose range consists of the boundary values of holomorphic
functions; it is called ``the'' Szeg\H o projector. We distinguish this case,
by calling this a \emph{classical} Szeg\H o projector.

 The boundary condition is modified in degree zero by requiring
\begin{equation}
\cS(\sigma^{0,0}\restrictedto_{bX})=0.
\end{equation}
To get a formally self adjoint operator, the boundary
condition in degree 1 must also be modified by requiring
\begin{equation}
(\Id-\cS)[\dbnc\sigma^{0,1}\restrictedto_{bX}]=0.
\end{equation}
These conditions, along with the $\dbar$-Neumann condition in degrees greater
than 1, define a projector, $\cR_+,$ acting of sections of
$\Spn\restrictedto_{bX}.$ The modified $\dbar$-Neumann condition for $\eth$ on a
strictly pseudoconvex manifold is requirement that
\begin{equation}
\cR_+[\sigma\restrictedto_{bX}]=0.
\label{eqn2}
\end{equation}
The pair $(\eth,\cR)$ denotes the operator defined by $\eth$ acting on a domain
defined by the condition in~\eqref{eqn2}. In our earlier papers we showed that
this operator is essentially self adjoint, and it graph closure is a Fredholm
operator. The spin-bundles and operators split into even and odd parts. The
index of the even part $(\eth^{\even},\cR^{\even}),$ computes the renormalized
holomorphic Euler characteristic of $X:$
\begin{equation}
\Ind(\eth^{\even},\cR^{\even})=\sum\limits_{q=1}^{n}(-1)^q\dim H^{0,q}(X).
\end{equation}

The analytic results are generalized to the non-integrable case by introducing the
notion of a generalized Szeg\H o projector. This idea appears
in~\cite{EpsteinMelrose} and is closely related to that introduced in the
appendix to~\cite{BoutetdeMonvel-Guillemin1}. Briefly, the \emph{contact}
structure on $Y$ defines an algebra of pseudodifferential operators,
$\Psi_H^*(Y),$ called the Heisenberg algebra,
see~\cite{Beals-Greiner1,Taylor3}. The classical Szeg\H o projector, $\cS,$ is
an element of $\Psi_H^0(Y).$ The principal Heisenberg-symbol of $\cS$ is
defined by the complex structure induced on the fibers of $H.$ Generally, if
$(Y,H)$ is a contact manifold, then an almost complex structure, $J,$ on the
fibers of $H,$ is positive if the induced Levi-form is positive definite. This
data defines a function, $s_{J},$ on $T^*Y,$ which is, in turn, the principal
symbol of an operator $\cS\in\Psi_H^0(Y).$
\begin{definition}
An operator $\cS\in\Psi_H^0(Y)$ is a generalized Szeg\H o projector if
\begin{enumerate}
\item $\cS^2=\cS$ and $\cS^*=\cS.$
\item There is a positive almost complex structure $J$ on $H$ so that the
  principal symbol of $\cS$ satisfies:
\begin{equation}
\sigma^H_0(\cS)=s_J.
\end{equation}
\end{enumerate}
\end{definition}

Classical Szeg\H o projectors, defined in the integrable case, are generalized
Szeg\H o projectors, but more importantly, generalized Szeg\H o projectors
exist on any contact manifold with positive almost complex structures.  A
fundamental fact about generalized Szeg\H o projectors is that if $\cS_1$ and
$\cS_2$ are two generalized Szeg\H o projectors on $(Y,H),$ then the restriction
\begin{equation}
\cS_1:\range\cS_2\longrightarrow \range\cS_1
\end{equation}
is a Fredholm operator, see~\cite{EpsteinMelrose}. We denote its index by
$\Rind(\cS_2,\cS_1).$ A generalized Szeg\H o projector is \emph{not} determined by
its full symbol, indeed, amongst pairs $(\cS_1,\cS_2),$ such that $\cS_1-\cS_2$ are smoothing
operators, the relative index $\Rind(\cS_2,\cS_1)$ assumes all integral values.

Using generalized Szeg\H o projectors, the modified pseudoconvex
$\dbar$-condition can be defined on any strictly pseudoconvex $\spnc$-manifold,
$X,$ satisfying the conditions described above. Let $(Y,H)$ be the boundary
of $X,$ which we suppose is strictly pseudoconvex, and let
$\cS\in\Psi_H^0(Y),$ be a generalized Szeg\H o projector. Using the
identification in~\eqref{eqn1}, the modified pseudoconvex $\dbar$-Neumann
condition defined by $\cS$ is given by
\begin{equation}
\begin{split}
&\cS[\sigma^{0,0}\restrictedto_{bX}]=0\\
&(\Id-\cS)[\dbnc\sigma^{0,1}\restrictedto_{bX}]=0\\
&[\dbnc\sigma^{0,q}]\restrictedto_{bX}=0\text{ for }q\geq 2.
\end{split}
\label{eqn3}
\end{equation}
As before these conditions are define by a projector, $\cR_+$ acting on
$\CI(Y;\Spn\restrictedto_{bX}).$ 

\begin{definition} Let $X$ be a manifold with boundary, $E,F$ two smooth vector
  bundles over $X,$ and $P:\CI(X;E)\to \CI(X;F)$ a first order differential
  operator. If $B$ is a pseudodifferential operator acting on sections of
  $E\restrictedto_{bX},$ then $(P,B)$ denotes the differential operator with
  domain $s\in\CI(X;E)$ satisfying $B[s\restrictedto_{bX}]=0.$
\end{definition}

In~\cite{Epstein44} it is shown that if $X$ is strictly pseudoconvex, then
$(\eth,\cR_+)$ is an essentially self adjoint operator and its graph closure is
a Fredholm operator. If $(\eth^{\eo},\cR_+^{\eo})$ are the even and odd parts,
then it is also shown that the adjoints satisfy
\begin{equation}
(\eth^{\eo},\cR_+^{\eo})^*=\overline{(\eth^{\ooee},\cR_+^{\ooee})}.
\end{equation}
Below we show that if $X$ is strictly pseudoconcave, then the same results hold
with $\cR_+$ replaced by $\Id-\cR_+.$

In our earlier papers extensive usage is made of gluing constructions, and
various formul{\ae} are proved relating the indices of sub-elliptic boundary
value problems on the pieces to the index of $\eth^{\even}$ on a boundary-less
glued space. In the first part of this paper we extend these results to more
general situations allowing multiple boundary components, and a glued space
with boundary components. These results are extensions of results of Bojarski
in the elliptic case to the sub-elliptic case. As part of this analysis, we
consider the structure of the Calderon projector on a $\spnc$-manifold with
several boundary components.

In the second part of the paper we apply these results to study the problem of
embeddability (or Stein fillability) for CR-structures on compact
3-manifolds. Let $X_+$ be a strictly pseudoconvex surface with boundary the
CR-manifold $(Y, T^{0,1}_bY).$ We suppose that $(Y,T^{0,1}_b)$ is also the
boundary of a strictly pseudoconcave manifold $X_-,$ which contains a positive,
compact holomorphic curve, $Z.$ Our main result is
\begin{theorem}\label{thm1} Let $(Y,T^{0,1}_bY)$ satisfy the conditions above, and let
  $\cS_0$ denote the classical Szeg\H o projector defined by the CR-structure
  on $Y.$ If
\begin{equation}
H^2_c(X_-;\Theta)=0\text{ and }\deg NZ\geq 2g-1,
\label{eqn18}
\end{equation}
where $g$ is the genus of $Z,$ then there is a constant $M,$ such that for a
sufficiently small embeddable deformations of the CR-structure, with Szeg\H o
projector $\cS_1,$ the relative index satisfies:
\begin{equation}
|\Rind(\cS_0,\cS_1)|\leq M.
\end{equation}
\end{theorem}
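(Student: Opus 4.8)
The plan is to reduce the estimate on $\Rind(\cS_0,\cS_1)$ to a uniform bound on the holomorphic cohomology of a \emph{deformed pseudoconcave cap}, by gluing the deformed pseudoconvex filling to a deformation of $X_-$, applying the sub-elliptic Bojarski-type gluing formula from the first part of the paper, and using the topological invariance of the index on the resulting closed manifold. First I would set up the deformations. The conditions~\eqref{eqn18} should guarantee that every sufficiently small embeddable deformation $Y_1$ of $(Y,T^{0,1}_bY)$ bounds not only a strictly pseudoconvex (indeed Stein) filling $X_{+,1}$, but also a strictly pseudoconcave deformation $X_{-,1}$ of $X_-$ that still carries a positive compact holomorphic curve $Z_1$ close to $Z$: the vanishing $H^2_c(X_-;\Theta)=0$ kills the obstruction to extending the boundary CR-deformation to a deformation of the complex structure on all of $X_-$ (Kodaira--Kuranishi--type deformation theory, as in Epstein--Henkin), while $\deg NZ\geq 2g-1$ forces $H^1(Z;NZ)=0$, so that $Z$ moves in an unobstructed family of the fixed dimension $\deg NZ-g+1$ and therefore persists in $X_{-,1}$. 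Smoothly one arranges that $X_{\pm,1}$ is the fixed manifold $X_\pm$ equipped with a continuous family of $\spnc$-structures coinciding with the original one outside a collar of the boundary.

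Next, form the closed $\spnc$-manifold $\hat X_1:=X_{+,1}\cup_{Y_1}X_{-,1}$; near the neck its $\spnc$-structure is determined by the contact data of $Y_1$, so it varies continuously with the deformation and is homotopic to that of $\hat X_0:=X_+\cup_Y X_-$. Hence, by the Gromov--Lawson excision theorem (equivalently by Atiyah--Singer and homotopy invariance), $\Ind\eth^{\even}_{\hat X_1}=\Ind\eth^{\even}_{\hat X_0}=:I_0$ is an integer independent of the deformation. The sub-elliptic gluing formula of the first part of the paper then gives, for every generalized Szeg\H o projector $\cS$ on $Y_1$,
\[
I_0=\Ind(\eth^{\even}_{X_{+,1}},\cR_+(\cS))+\Ind(\eth^{\even}_{X_{-,1}},\Id-\cR_+(\cS)).
\]

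Now I would isolate the relative index. The projector $\cS_0$ is itself a generalized Szeg\H o projector for every $Y_1$ near $Y$ (the contact structure is unchanged and $\sigma^H_0(\cS_0)$ is still the symbol of a positive almost complex structure), so $t\mapsto(\eth^{\even}_{X_{-,1}},\Id-\cR_+(\cS_0))$ is a continuous family of Fredholm operators, and its index is locally constant, equal to $\Ind(\eth^{\even}_{X_-},\Id-\cR_+(\cS_0))$. On the other hand, replacing the boundary projector $\cS_0$ by $\cS_1$---which has the same principal symbol---changes the pseudoconcave-side index by exactly $\pm\Rind(\cS_0,\cS_1)$, by the analysis of the dependence of a sub-elliptic index on its boundary projector carried out in the first part of the paper. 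Combining these two observations,
\[
|\Rind(\cS_0,\cS_1)|\leq\bigl|\Ind(\eth^{\even}_{X_{-,1}},\Id-\cR_+(\cS_1))\bigr|+\bigl|\Ind(\eth^{\even}_{X_-},\Id-\cR_+(\cS_0))\bigr|.
\]
Since $\cS_1$ is the \emph{classical} Szeg\H o projector of $Y_1$ and $X_{-,1}$ is a genuine complex cap, the pseudoconcave analogue of the Euler-characteristic identity from the Introduction identifies the first term on the right with the absolute value of a renormalized holomorphic Euler characteristic of $X_{-,1}$, i.e.\ with $\bigl|\sum_{q\geq1}(-1)^q\dim H^{0,q}(X_{-,1})\bigr|$ taken in the appropriate, compactly supported, sense.

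It therefore remains to bound $\sum_{q\geq1}(-1)^q\dim H^{0,q}(X_{-,1})$ uniformly as the deformation tends to $0$, and this is the crux of the argument. The curve $Z_1\subset X_{-,1}$ is positive, has $\deg NZ_1=\deg NZ\geq 2g-1$, and moves in a family of the same fixed dimension as $Z$; this moving positive curve essentially fills up $X_{-,1}$, and together with the fact that $X_{-,1}$ is a small perturbation of the fixed $X_-$ it should yield, via a vanishing/finiteness theorem for positive curves on pseudoconcave manifolds, a bound $\dim H^{0,q}(X_{-,1})\leq C$ for $q\geq1$ with $C$ independent of the deformation; the theorem then follows from the displayed inequality with a suitable $M$. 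The main obstacle is precisely this last uniform estimate: a priori the groups $H^{0,q}(X_{-,1})$ are only upper semicontinuous under deformation, and it is exactly the two hypotheses in~\eqref{eqn18}---$\deg NZ\geq 2g-1$, which keeps the positive moving curve alive with constant numerical invariants, and $H^2_c(X_-;\Theta)=0$, which guarantees that $X_{-,1}$ is genuinely a small deformation of $X_-$---that must be leveraged to upgrade semicontinuity to a genuine uniform bound. Everything else is bookkeeping with relative indices of generalized Szeg\H o projectors and the gluing formula from the first part of the paper.
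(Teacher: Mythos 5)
There is a genuine gap, and it occurs at the two places where your argument carries all the weight. First, when you form $\hat X_1=X_{+,1}\cup_{Y_1}X_{-,1}$ and assert that $\Ind(\eth^{\even}_{\hat X_1})=\Ind(\eth^{\even}_{\hat X_0})$ by continuity/homotopy invariance, you are implicitly assuming that the Stein filling $X_{+,1}$ of the deformed CR-structure can be connected to $X_+$ through $\spnc$-manifolds. Nothing guarantees this: the filling of the deformed structure is an unknown manifold that need not be diffeomorphic, or even homotopy equivalent, to $X_+$, so the glued closed manifold can change topologically and its index can jump. Controlling the topology of the unknown filling is exactly the heart of the problem, and it is where the hypothesis $\deg NZ\geq 2g-1$ is really used in the paper: not through $H^1(Z;NZ)=0$ and persistence of the curve, but through Stipsicz's geography theorem, which bounds the signature and Euler characteristic of \emph{all} Stein fillings of a circle bundle of degree $d\geq 2g-1$ with its standard contact structure, with no smallness assumption. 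Second, even granting your reduction, the final step requires a uniform bound on $\dim H^{0,q}$ of the deformed pseudoconcave cap, which you acknowledge you cannot prove; since these dimensions are only semicontinuous (and the boundary condition $\Id-\cR_+(\cS_1)$ involves the Szeg\H o projector of the \emph{other} side, so the index is not even an intrinsic holomorphic Euler characteristic of $X_{-,1}$), the proposal does not close.

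For comparison, the paper's route avoids both difficulties: it uses $H^2_c(X_-;\Theta)=0$ (Kiremidjian/Epstein--Henkin) to extend the boundary deformation into $X_-$ so that the exhaustion $\varphi$ stays strictly plurisubharmonic on the collar $X_c\cap X_-$; then Theorem~\ref{thm8} (the Runge-type vanishing), Theorem 9 of~\cite{Epstein44}, and Gromov--Lawson excision (the piece $X_{c-}\amalg\bX_{c-}$ is an invertible double, hence contributes index zero) yield the identity $\Rind(\cS_0,\cS_1)=\Rind(\tcS_0,\tcS_1)$, where $\tcS_0,\tcS_1$ are the Szeg\H o projectors on the circle-bundle hypersurface $bX_c$ bounding a tube around $Z$. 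That relative index is then evaluated by the formula from~\cite{Epstein44} in terms of $\dim H^{0,1}$, signature, and Euler characteristic of the two fillings of $bX_c$, and Stipsicz's theorem bounds it uniformly. Your proposal captures the correct role of $H^2_c(X_-;\Theta)=0$, but without an input of Stipsicz type it cannot bound the contribution of the unknown filling, and the cohomological estimate you defer is precisely the unproved content.
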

As a corollary of this result we conclude that the set of small embeddable
deformations of $(Y,T^{0,1}_bY)$ is closed in the $\CI$-topology. This theorem
is a considerable generalization of the seminal result of Lempert treating
domains in $\bbC^2,$ see~\cite{Lempert2}. It represents the culmination of the
line of research begun in~\cite{EpsteinHenkin2,Epstein5}. It is proved by
combining the index formula from~\cite{Epstein44} with the Gromov-Lawson
excision theorem,~\cite{GromovLawson}, and results of Stipsicz on
the topology of Stein fillings of circle bundles over Riemann surfaces,
see~\cite{stipsicz}.

\section{$\spnc$-boundaries}

Let $X$ be a $2n$-dimensional $\spnc$-manifold with compatible metric $g.$ The
$\spnc$-structure on $X$ defines a bundle, $\Spn,$ of complex spinors, which is
a Clifford module for the complexified Clifford bundle of $T^*X.$ If $dV$ is volume
form, then the Clifford action of $i^n\bc(dV)$ splits $\Spn$ into two subbundles
\begin{equation}
\Spn=\Spn^{\even}\oplus \Spn^{\odd}.
\end{equation}
The Clifford action of $\eta_x\in T^*_xX,$ a non-vanishing 1-form at $x,$  defines
isomorphisms:
\begin{equation}
\bc(\eta_x):\Spn^{\eo}_x\longrightarrow \Spn^{\ooee}_x.
\end{equation}

If $X$ is a manifold with boundary,  then the $\spnc$-structure on $X$
induces a $\spnc$-structure on $bX.$ The spin-bundle of $bX,$ $\Spn_{bX},$ is
canonically isomorphic to $\Spn^{\even}\restrictedto_{bX}.$ Let $t$ be a
defining function for $bX,$ such that $t<0$ on $X,$ $\|dt\|_g=1,$ and $\grad_g t$
is orthogonal to $TbX\subset TX\restrictedto_{bX}.$ Under this
identification, the Clifford action of $\eta\in T^*_{x}bX,$ on $\Spn_{bX}$ is given by
\begin{equation}
\bc_{bX}(\eta)\cdot s=\bc_X(-dt)\bc_X(\teta)\cdot s.
\label{spnb}
\end{equation}
Here $\teta$ is the extension of $\eta$ to $T_xX$ by zero on the orthogonal
complement of $T_xbX\subset T_xX.$

\begin{definition}
Let $(Y,g_Y)$ be an odd-dimensional $\spnc$-manifold, such that there is an
even dimensional $\spnc$-manifold, $(X,g_X)$ with oriented boundary $Y.$
Suppose that $g_X\rst{TY}=g_Y,$ and the $\spnc$-structure on $Y$ satisfies
\begin{equation}
\Spn_Y\simeq \Spn_X^{\even}\restrictedto_{bX},
\end{equation}
and, under this identification, the Clifford action of $T^*Y$ on $\Spn_Y$
satisfies~\eqref{spnb}, with $bX=Y.$ In this case we say that $(Y,\Spn_Y)$ is the
\emph{$\spnc$-boundary} of $(X,\Spn_X).$
\end{definition}
In this connection we often consider the boundary with its orientation
reversed, $\bY.$ Identifying $\Spn_Y$ with $\Spn^{\odd}_X$ and
defining the Clifford action by
\begin{equation}
\bc_{bX}(\eta)\cdot s=\bc_X(dt)\bc_X(\teta)\cdot s,
\label{spnb-}
\end{equation}
defines a $\spnc$-structure on $Y$ inducing the opposite orientation. 

We briefly review the construction of an ``invertible double,'' given
in~\cite{BBW}. Let $X$ be a $\spnc$-manifold with boundary $Y,$ connected or
not. The tubular neighborhood theorem implies that there is a neighborhood,
$U,$ of the boundary that is diffeomorphic to $Y\times [-1,0].$ Using this
identification, we define the double of $X$ to be the oriented manifold
\begin{equation} 
\hX= X\amalg_{bX} \bX.
\end{equation}
Here $\bX$ denote $X$ with the orientation reversed. The boundary,
$Y\times\{0\},$ is now a separating hypersurface in $\hX$ with neighborhood
$V\simeq Y\times (-1,1).$ A function on $\hX$ is smooth near to $Y$ if smooth
with respect to this product structure. The tubular neighborhood theorem
implicitly defines a function, $t$ in $V,$ taking values in $[-1,1].$ We denote
the component of $\hX\setminus Y\times\{0\}$ where $t<0$ by $X_+$ and the other
component by $X_-.$

We can deform the metric in the collar so that
\begin{equation}
g\restrictedto_{V}=dt^2+g_Y,
\end{equation}
where $g_Y$ is a metric on $Y.$ In this case $dt$ is an outward pointing, unit co-vector and
$\bc(-dt),$ Clifford multiplication by $-dt$ defines  unitary isomorphisms of the
spin-bundles
\begin{equation}
\bc(-dt):\Spn^{\even}\restrictedto_{V}\to\Spn^{\odd}\restrictedto_{V}.
\end{equation}
The spin-bundle $\hSpn\to\hX$ is defined by using this identification to glue
$\Spn^{\eo}\restrictedto_{V}$ to $\Spn^{\ooee}\restrictedto_{V}.$ In~\cite{BBW}
it is shown that the Dirac operator extends to act on sections of $\hSpn,$ with
trivial kernel and co-kernel. Hence $(\hX,\hSpn)$ is called an invertible
double.

 Suppose that $X_+$ is a $\spnc$-manifold with boundary $Y$ and $X_-$ is a
$\spnc$-manifold with boundary $\bY.$ Suppose further that the
$\spnc$-structures on $bX_{\pm}$ are (after a change of orientation on one)
isotopic. After attaching cylinders, diffeomorphic to $Y\times [0,1],$ to
$X_+,X_-,$ an obvious modification of the invertible double construction
from~\cite{BBW} provides a $\spnc$-manifold diffeomorphic to $X_+\amalg_Y X_-.$
If the $\spnc$-structures near to $bX_{\pm}$ are defined by almost complex
structures, then Lemma 8 of~\cite{Epstein44} shows that this remains true in
the added cylinders. The original manifolds $X_+, X_-$ are $\spnc$-isomorphic
to open subsets of the glued space. In the sequel it should be understood that
$X_+\amalg_Y X_-$ refers to the $\spnc$-manifold obtained by such an
augmentation and gluing process. In circumstances where there are several
manifolds, we use the notation $\eth_X$ to indicate the $\spnc$-Dirac operator
on the manifold $X.$

In this paper we usually assume that $X$ is a $\spnc$-manifold
with boundary and the $\spnc$-structure is defined in a neighborhood of $bX$ by
an almost complex structure.  In this circumstance the almost
complex structure defines a hyperplane field on $bX,$ as the $\Ker\theta,$
defined in~\eqref{eqn8}. We usually assume that this hyperplane field is a
contact structure, and that, as boundaries of $X,$ each boundary component is
either strictly pseudoconvex or strictly pseudoconcave. For the sake of brevity,
in the sequel we describe this circumstance by the phrase: ``the
$\spnc$-structure is defined in a neighborhood of $bX$ by an almost complex
structure, making the components of $bX$ either strictly pseudoconvex or
strictly pseudoconcave.''

\section{The Calderon projector}\label{sec2}
In our earlier papers we showed that the index of a sub-elliptic boundary value
problem for the $\spnc$-Dirac operator, $(\eth^{\even},\cR^{\even})$ equals the
relative index between the projector defining the boundary condition,
$\cR^{\even}$ and the Calderon projector, $\cP^{\even}:$ 
\begin{equation}
\Ind(\eth^{\even},\cR^{\even})=\Rind(\cP^{\even},\cR^{\even}).
\end{equation}
This relative index can in turn be computed as a difference of traces, which
provides an incisive analytic tool for studying the properties of these indices
under deformation and gluing. In this paper we consider manifolds with several
boundary components. Many of the analytic results in~\cite{BBW} and our earlier
papers are essentially microlocal and so the number of boundary components is
of no import. To analyze the behavior of the indices of boundary value problems
under gluing it is useful to have more detailed information about the Calderon
projector.

Suppose for example that $X$ is a manifold with two boundary components $Y_0,
Y_1.$ The Calderon projector acts on sections of 
\begin{equation}
\Spn\restrictedto_{bX}\simeq
\Spn\restrictedto_{Y_0}\oplus\Spn\restrictedto_{Y_1}
\simeq \Spn_{0}\oplus\Spn_{1}.
\end{equation}
We can use this splitting to write the Calderon projector in block form
\begin{equation}
\cP=\left(\begin{matrix} \cP_{00} & \cP_{01}\\
\cP_{10} & \cP_{11}\end{matrix}\right).
\label{eqn22}
\end{equation}
The principal symbols of the diagonal terms are projectors, the off-diagonal
terms are smoothing operators.  It is of interest to when when this
projector can be deformed, through projectors, to a diagonal matrix. A simple
analytic sufficient condition is that $\cP_{00}$ and $\cP_{11}$ are
projectors. 
\begin{proposition}\label{prop1} Suppose that $\cP$ is a projector with block
  form as in~\eqref{eqn22}. If $\cP_{00}^2=\cP_{00}$ and $\cP_{11}^2=\cP_{11},$
  then
\begin{equation}
\cP_{t,s}=\left(\begin{matrix} \cP_{00} & t\cP_{01}\\
s\cP_{10} & \cP_{11}\end{matrix}\right).
\label{eqn222}
\end{equation}
are projectors for all $t,s\in\bbC.$
\end{proposition}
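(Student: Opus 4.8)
The plan is to write out the condition $\cP_{t,s}^2 = \cP_{t,s}$ entry by entry and see that it reduces, thanks to the hypotheses $\cP_{00}^2 = \cP_{00}$ and $\cP_{11}^2 = \cP_{11}$, to a set of identities that follow automatically from $\cP^2 = \cP$. First I would expand the block product:
\begin{equation}
\cP_{t,s}^2 = \left(\begin{matrix}
\cP_{00}^2 + ts\,\cP_{01}\cP_{10} & t(\cP_{00}\cP_{01} + \cP_{01}\cP_{11})\\
s(\cP_{10}\cP_{00} + \cP_{11}\cP_{10}) & ts\,\cP_{10}\cP_{01} + \cP_{11}^2
\end{matrix}\right).
\end{equation}
Comparing with $\cP_{t,s}$ we need four identities: $\cP_{00}^2 + ts\,\cP_{01}\cP_{10} = \cP_{00}$; $\cP_{00}\cP_{01} + \cP_{01}\cP_{11} = \cP_{01}$; $\cP_{10}\cP_{00} + \cP_{11}\cP_{10} = \cP_{10}$; and $ts\,\cP_{10}\cP_{01} + \cP_{11}^2 = \cP_{11}$.

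Next I would extract what $\cP^2 = \cP$ itself gives, namely the same four expressions with $t = s = 1$: $\cP_{00}^2 + \cP_{01}\cP_{10} = \cP_{00}$, $\cP_{00}\cP_{01} + \cP_{01}\cP_{11} = \cP_{01}$, $\cP_{10}\cP_{00} + \cP_{11}\cP_{10} = \cP_{10}$, and $\cP_{10}\cP_{01} + \cP_{11}^2 = \cP_{11}$. The two off-diagonal conditions for $\cP_{t,s}$ are literally the off-diagonal conditions for $\cP$ (the factor $t$, resp. $s$, cancels), so they hold with no further work. For the diagonal conditions, the hypothesis $\cP_{00}^2 = \cP_{00}$ combined with the $(0,0)$-entry of $\cP^2 = \cP$ forces $\cP_{01}\cP_{10} = 0$; similarly $\cP_{11}^2 = \cP_{11}$ forces $\cP_{10}\cP_{01} = 0$. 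Hence in the $(0,0)$-entry of $\cP_{t,s}^2$ the term $ts\,\cP_{01}\cP_{10}$ vanishes identically in $t,s$, leaving $\cP_{00}^2 = \cP_{00}$, which holds by hypothesis; and likewise for the $(1,1)$-entry. Therefore all four identities hold for every $t, s \in \bbC$, so $\cP_{t,s}$ is a projector.

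There is essentially no obstacle here — the only thing to be careful about is drawing the correct consequences from the diagonal entries of $\cP^2 = \cP$, i.e. observing that the extra idempotency hypotheses are exactly what is needed to kill the cross terms $\cP_{01}\cP_{10}$ and $\cP_{10}\cP_{01}$, after which the whole family degenerates to a product of block-constant expressions. I would present it as the short computation above, remarking that the path $t, s \in [0,1]$ connecting $\cP = \cP_{1,1}$ to the diagonal projector $\cP_{0,0} = \operatorname{diag}(\cP_{00}, \cP_{11})$ is the deformation of interest for the gluing application, and that the symbolic statements (principal symbols of the diagonal blocks being projectors, off-diagonal blocks smoothing) already recorded before the proposition guarantee this is a smooth path of pseudodifferential projectors of the right class.
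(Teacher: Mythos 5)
Your proof is correct and follows essentially the same route as the paper's: both derive from $\cP^2=\cP$ together with the idempotency of the diagonal blocks that $\cP_{01}\cP_{10}=\cP_{10}\cP_{01}=0$ and that the off-diagonal intertwining identities $\cP_{00}\cP_{01}+\cP_{01}\cP_{11}=\cP_{01}$, $\cP_{10}\cP_{00}+\cP_{11}\cP_{10}=\cP_{10}$ hold, after which the block computation of $\cP_{t,s}^2$ closes for all $t,s\in\bbC$. Your write-up simply makes the entrywise verification explicit, which the paper leaves as ``easily deduced.''
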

\begin{proof} The fact that $\cP^2=\cP,$ coupled with the equations
  $\cP_{00}^2=\cP_{00}$ and $\cP_{11}^2=\cP_{11},$ imply that
\begin{equation}
\begin{split}
&\cP_{10}\cP_{01}=0=\cP_{01}\cP_{10}\\
&\cP_{00}\cP_{10}+\cP_{10}\cP_{11}=\cP_{10}\\
&\cP_{01}\cP_{00}+\cP_{11}\cP_{01}=\cP_{01}.
\end{split}
\label{eqn34}
\end{equation}
From~\eqref{eqn34} we easily deduce that
\begin{equation}
\cP_{t,s}=\left(\begin{matrix} \cP_{00} &t \cP_{10}\\
s\cP_{01} & \cP_{11}\end{matrix}\right)
\end{equation}
is a projector,  for $s,t\in\bbC.$
\end{proof}

\begin{remark}
Evidently, $\cP_{t,t}$ defines a homotopy through projectors, from $\cP=\cP_{1,1}$ to a
block diagonal matrix, $\cP_{0,0}.$
\end{remark}

In this section we consider the structure of the Calderon projector for a Dirac
operator on a manifold with several boundary components. It is important to
understand that any fundamental solution for $\eth$ leads to the construction
of a Calderon projector. If $X$ is a $\spnc$-manifold with boundary, we can
assume that $\ins{X}$ is a relatively compact open subset of larger $\spnc$-manifold,
$\tX.$ We let $r$ denote a defining function for $bX,$ such that
$dr\restrictedto_{bX}$ is orthogonal to $T^*bX$ and $\|dr\|=1.$ Let $\eth$
denote the Dirac operator on $\tX.$ Assume that $Q$ is a fundamental solution
defined on $\tX;$ thus if $s\in\CIc(\tX;\Spn),$ then
\begin{equation}
Q\eth s=\eth Qs=s.
\end{equation}
It is clear that $Q\CIc(\tX;\Spn)\subset\CI(\tX;\Spn)$ and therefore, by
duality, we can extend $Q$ to act on $\CmIc(\tX;\Spn).$ 

Let $u\in\CI(X;\Spn),$ satisfy $\eth u=0,$ and let $U$ denote the extension
of $u$ by zero to all of $\tX.$ We see that $\eth
U=\bc(dr)u\restrictedto_{bX}\otimes\delta(r),$ and therefore:
\begin{equation}
Q[\bc(dr)u\restrictedto_{bX}\otimes\delta(r)]\restrictedto_X=u.
\end{equation}
More generally, if $f$ is a section of $\Spn\restrictedto_{bX},$ then
\begin{equation}
F=Q[\bc(dr)f\otimes\delta(r)]
\end{equation}
belongs to $\Ker\eth$ on $\tX\setminus bX.$ The analysis in Chapter 12
of~\cite{BBW} (for example) shows that $F$ has well defined limits as we
approach $bX,$ from either side, which we denote by $\cP_{\pm}f.$ We use $+$ to
denote the limit from $X$ and $-,$ the limit from $\tX\setminus X.$ The
discussion above shows that $\cP_+$ acts as the identity on the boundary values
of harmonic spinors defined in $X.$ 

 Let $Q_0$ and $Q_1$ be two fundamental solutions defined in a neighborhood of
$X,$ and $\cP_{0+}, \cP_{1+}$ the Calderon projectors they define.  Because two
fundamental solutions differ by a smoothing operator, it follows that
$(1-t)Q_0+tQ_1$ is also a fundamental solution for any $t\in [0,1].$ This
implies that any pair of Calderon projectors are strongly isotopic:
\begin{proposition} Let $\cP_{0+}, \cP_{1+}$ be Calderon projectors defined by
  fundamental solutions for $\eth_{X},$ then there is a smooth path,
  $\{\cP_{t+}:\: t\in [0,1]\},$ in the space
  of pseudodifferential projections joining $\cP_{0+}$ to $\cP_{1+}.$
\end{proposition}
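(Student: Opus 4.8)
The plan is to make the remark preceding the statement into a proof. First I record the basic fact, used in the paragraph above, that any two fundamental solutions for $\eth_X$ defined in a neighborhood of $X$ differ by a smoothing operator: if $Q_0,Q_1$ are both fundamental solutions and $R=Q_1-Q_0$, then $\eth R s=0$ and $R\eth s =0$ for all $s\in\CIc(\tX;\Spn)$, so by ellipticity of $\eth$ both $R$ and its transpose map distributions to smooth sections, and hence $R$ has a smooth Schwartz kernel. Consequently, for every $t\in\bbR$ the operator $Q_t:=(1-t)Q_0+tQ_1=Q_0+tR$ again satisfies $\eth Q_t s=Q_t\eth s=s$ on $\CIc(\tX;\Spn)$, so $Q_t$ is a fundamental solution for $\eth_X$, and it differs from $Q_0$ by the smoothing operator $tR$.

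Second, I apply to each $Q_t$ the construction recalled above: for a section $f$ of $\Spn\restrictedto_{bX}$ set $F_t=Q_t[\bc(dr)f\otimes\delta(r)]$, which lies in $\Ker\eth_X$ on $\tX\setminus bX$, and which by the analysis in Chapter~12 of~\cite{BBW} (in the $\spnc$-Dirac setting as in our earlier papers) has well-defined one-sided limits along $bX$. The limit from $X$ defines $\cP_{t+}$, the Calderon projector attached to $Q_t$; so $\cP_{0+},\cP_{1+}$ are the given projectors, and each $\cP_{t+}$ is an order-zero pseudodifferential projection on sections of $\Spn\restrictedto_{bX}$. Since $t\mapsto Q_t=Q_0+tR$ is affine, and the passage $Q\mapsto\cP_+$ is obtained by composing $Q$ with the fixed operations $f\mapsto\bc(dr)f\otimes\delta(r)$ on one side and restriction to $X$ followed by the boundary trace on the other, all continuous in the relevant operator topologies, the family $t\mapsto\cP_{t+}$ is smooth; in fact it is affine modulo a smooth one-parameter family of smoothing operators. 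This yields the desired smooth path.

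The point requiring a word of care is that the path genuinely stays inside the space of pseudodifferential projections, i.e. that every $\cP_{t+}$ is idempotent and the orders are uniformly controlled for $t$ in the compact interval $[0,1]$. Idempotency is intrinsic and $t$-independent in character: $\cP_{t+}$ acts as the identity on the space of boundary values of solutions of $\eth_X u=0$ on $X$, and its range is exactly that (fixed) space, which forces $\cP_{t+}^2=\cP_{t+}$; equivalently one invokes the standard jump relations for $Q_t$. Uniform order bounds hold because the symbol of $\cP_{t+}$ is produced from that of $Q_t$ by the usual algebraic recursion with coefficients depending smoothly on the parameter $t$, and $[0,1]$ is compact. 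Thus the only real work beyond the two structural facts — that fundamental solutions form an affine space modulo smoothing operators, and that $\cP_+$ depends continuously on the fundamental solution — is routine bookkeeping in the parameter-dependent Heisenberg/pseudodifferential calculus of~\cite{BBW}, and I expect that bookkeeping, rather than any analytic difficulty, to be the main thing to check carefully.
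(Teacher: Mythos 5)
Your proposal is correct and follows essentially the same route as the paper: the paper's own justification is precisely that two fundamental solutions differ by a smoothing operator, so the affine family $(1-t)Q_0+tQ_1$ consists of fundamental solutions whose associated Calderon projectors give the desired smooth path. You simply fill in the details (elliptic regularity for the smoothing difference, the jump construction for each $Q_t$, and idempotency via the fixed range of boundary values of harmonic spinors) that the paper leaves implicit.
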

This result allows us to be a bit sloppy about which Calderon projector we are
using.

In what follows we are usually more specific as to the origin of the
fundamental solution. Indeed, $\tX$ is usually taken to be a compact, closed
manifold on which $\eth$ is invertible.  The range of $\cP_-$ consists of the
boundary values of harmonic spinors on $\tX\setminus X,$ and we have the jump
formula:
\begin{equation}
\cP_++\cP_-=\Id.
\label{jmpform}
\end{equation}

Denote the Dirac operator on the invertible double, $\hX,$ by $\heth.$ Since
$\heth$ is invertible, there is a fundamental solution, $\hQ,$ defined on
$\hX,$ which is a classical pseudodifferential operator of order $-1.$ The
Calderon projector, $\cP_+,$ for $\eth$ on $X_+\simeq X$ is a
pseudodifferential operator defined on $bX$ whose range consists of the
boundary values of harmonic spinors on $X_+,$ that is, solutions to
\begin{equation}
\eth \sigma=0
\end{equation}
in $\CI(X_+;\Spn).$ In~\cite{BBW} the fundamental solution $\hQ$ is used to
construct a Calderon projector. As noted in~\eqref{jmpform}  its nullspace consists of
boundary values of harmonic spinors on $X_-.$ 

If $D\subset X_+$ is any domain with smooth boundary, then the
Boo\ss-Bavnbeck-Wojciechowski construction applies, {\it mutatis mutandis,} to
construct a Calderon projector, $\cP_{D_+},$  defined on $bD.$  The range of
$\cP_{D_+}$ consists of the boundary values of harmonic spinors defined on $D,$ and
its nullspace consists of boundary values of harmonic spinors defined on the
complement $\hX\setminus D.$ If we denote this complement by $D_-,$ then this
statement is simply the identity:
\begin{equation}
\cP_{D_+}+\cP_{D_-}=\Id.
\label{eqn31}
\end{equation}
The proof of the following result is now quite simple:
\begin{proposition}\label{prop2} Let $X_{01}$ be a $\spnc$-manifold with boundary consisting
  of two components, $Y_0, Y_1.$ Suppose that $\bY_0$ is the boundary of a
  $\spnc$-manifold with boundary $X_0.$ There is a Calderon projector, $\cP,$ for
  $\eth_{X_{01}},$ so that with respect to the splitting in~\eqref{eqn22}, the
  diagonal terms satisfy:
\begin{equation}
\cP_{00}^2=\cP_{00}\text{ and }\cP_{11}^2=\cP_{11}.
\end{equation}
\end{proposition}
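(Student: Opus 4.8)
The plan is to build the Calderon projector for $\eth_{X_{01}}$ out of a fundamental solution on a closed, invertible double in which one can \emph{separately} close up the two boundary components, so that each diagonal block is itself a Calderon projector of the kind constructed in \cite{BBW}, and hence a genuine projector. Concretely, first I would attach to $X_{01}$ along $Y_0$ the given manifold $X_0$ (whose boundary is $\bY_0$), using the gluing/augmentation construction reviewed above; this produces a $\spnc$-manifold $\widetilde{X}_0 := X_0 \amalg_{Y_0} X_{01}$ whose only remaining boundary component is $Y_1$. Then, since $X_{01}$ has $\bY_1$-type data available from within $\widetilde X_0$ as well, I would double $\widetilde X_0$ across $Y_1$ in the invertible-double fashion of \cite{BBW}, obtaining a closed $\spnc$-manifold $\widehat X$ on which $\heth$ is invertible, together with a classical order $-1$ fundamental solution $\widehat Q$. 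Inside $\widehat X$ we now have $X_{01}$ sitting as a domain with smooth boundary $Y_0 \amalg Y_1$, and the \cite{BBW} construction, applied \emph{mutatis mutandis} exactly as in the discussion preceding the statement, yields a Calderon projector $\cP = \cP_{(X_{01})_+}$ on $\Spn\restrictedto_{bX_{01}} = \Spn_0 \oplus \Spn_1$ whose range is the boundary values of harmonic spinors on $X_{01}$ and whose nullspace is the boundary values of harmonic spinors on the complement $(X_{01})_- = \widehat X \setminus X_{01}$.

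Next I would identify the diagonal blocks. The key point is that the complement $(X_{01})_-$ splits, \emph{as a $\spnc$-manifold}, into the two disjoint pieces produced by the two independent capping operations: the piece $X_0$ glued on along $Y_0$, and the piece obtained from doubling across $Y_1$ (call it $X_1^{\mathrm{dbl}}$), with boundaries $Y_0$ and $Y_1$ respectively and with \emph{no} other boundary. Because $\widehat X \setminus X_{01} = X_0 \amalg X_1^{\mathrm{dbl}}$ is a disjoint union, the space of harmonic spinors on it is the direct sum of the harmonic spinors on each factor, and a harmonic spinor on each factor has boundary data supported on a single component $Y_j$. Dually, applying $\widehat Q$ to a single-layer source $\bc(dr) f \otimes \delta(r)$ supported on $Y_j$ and restricting from the $X_{01}$-side produces a harmonic spinor on $X_{01}$; restricting its boundary value to $Y_j$ itself gives $\cP_{jj} f$, while restricting to the \emph{other} component gives the off-diagonal block. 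The jump-type identity \eqref{eqn31}, $\cP_{(X_{01})_+} + \cP_{(X_{01})_-} = \Id$, now restricts, on each single component $Y_j$ separately, to a two-term jump formula $\cP_{jj} + \cP_{jj}^{-} = \Id_{\Spn_j}$, where $\cP_{jj}^{-}$ is the Calderon projector of the \cite{BBW} type for the single-boundary-component manifold capping off $Y_j$ inside $\widehat X$. Since that single-component complement has exactly one boundary component, its Calderon projector is an honest projector by the standard \cite{BBW} theory; hence so is $\cP_{jj} = \Id - \cP_{jj}^{-}$, i.e. $\cP_{00}^2 = \cP_{00}$ and $\cP_{11}^2 = \cP_{11}$.

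To make the second paragraph rigorous the one genuinely substantive step is showing that $\cP_{jj}$ really is the Calderon projector of the single-component problem ``$X_{01}$ with the $Y_{1-j}$ boundary capped off,'' rather than merely something with the right principal symbol. The cleanest route is to observe that harmonic spinors on the disconnected complement $X_0 \amalg X_1^{\mathrm{dbl}}$ decompose into those coming from $X_0$ (boundary data on $Y_0$, zero on $Y_1$) and those from $X_1^{\mathrm{dbl}}$ (boundary data on $Y_1$, zero on $Y_0$); feeding this decomposition through the jump formula \eqref{eqn31} and reading off the $Y_0 \to Y_0$ and $Y_1 \to Y_1$ entries shows that $\Id - \cP_{jj}$ is precisely the restriction of the full complement-Calderon projector to the $Y_j$-block, which by the above decomposition is the Calderon projector for the connected piece capping $Y_j$. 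That connected piece is a manifold with a \emph{single} smooth boundary component sitting inside the closed invertible double $\widehat X$, so \cite[Ch.~12]{BBW} gives $(\Id - \cP_{jj})^2 = \Id - \cP_{jj}$, and therefore $\cP_{jj}^2 = \cP_{jj}$.

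The main obstacle I anticipate is purely in the bookkeeping of the gluing: one must arrange the collar identifications and metric deformations near $Y_0$ (for the cap $X_0$) and near $Y_1$ (for the double) so that the product structures are compatible and $\heth$ is genuinely invertible on the resulting closed $\widehat X$ — this is where Lemma~8 of \cite{Epstein44} and the augmentation conventions fixed earlier in the paper are invoked — and to check that the complement $\widehat X \setminus X_{01}$ is then literally a disjoint union of the two caps with no spurious identification along the way. Once that is set up, the algebra of blocks is immediate from \eqref{eqn31}, and no further estimates are needed.
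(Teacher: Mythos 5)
Your proposal is correct and follows essentially the same route as the paper: glue $X_0$ to $X_{01}$ along $Y_0$, pass to the invertible double across $Y_1$, define all Calderon projectors from the single fundamental solution $\hQ$ via the limit construction, and use the jump identity~\eqref{eqn31} together with the disconnectedness of the complement to identify the diagonal blocks with genuine projectors. The only cosmetic difference is that you describe $\cP_{11}$ as $\Id$ minus the Calderon projector of the doubled cap, whereas the paper writes it directly as the Calderon projector $\cP_1$ of the glued manifold $X_1$ --- the same statement via the jump formula for $X_1$.
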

\begin{proof} We let $X_1$ denote the $\spnc$-manifold obtained by gluing $X_0$
  to $X_{01}.$ To do this gluing it may be necessary to glue collars onto $X_0$
  and $X_{01},$ in which to flatten the $\spnc$-structure. The important point
  is that $X_{01}$ is an open subset of $X_1.$ We can then double $X_1$
  across its remaining boundary, to obtain the invertible double $\hX_1.$ Let
  $\hQ$ denote the fundamental solution for $\heth$ on $\hX_1.$ Using this
  fundamental solution we construct Calderon projectors for $X_{01},$ $X_1$ and
  $\ins{(X_1\setminus X_{01})},$ which we denote by $\cP, \cP_1,$ and
  $\cP_0.$ In light of the construction of a Calderon projector as a limit,
  and~\eqref{eqn31}, a moments thought shows that the block decomposition of
  $\cP$ takes the form:
\begin{equation}
\cP=\left(\begin{matrix} \Id-\cP_0 & P_{10}\\
P_{01} & \cP_1\end{matrix}\right).
\end{equation}
As $\cP_0$ and $\cP_1$ are projectors, the assertion of the proposition follows.
\end{proof}

Propositions~\ref{prop1} and ~\ref{prop2} imply:
\begin{corollary} Under the hypotheses of Proposition~\ref{prop2}, the Calderon
  projector can be deformed through projectors to
\begin{equation}
\cP_d=\left(\begin{matrix} \Id-\cP_0 & 0\\
0 & \cP_1\end{matrix}\right).
\end{equation}
\end{corollary}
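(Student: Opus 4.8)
The plan is to combine Proposition~\ref{prop2} with Proposition~\ref{prop1}, noting that the corollary is almost immediate once the two are in hand. By Proposition~\ref{prop2}, there is a Calderon projector $\cP$ for $\eth_{X_{01}}$ whose block decomposition with respect to~\eqref{eqn22} has the form
\begin{equation}
\cP=\left(\begin{matrix} \Id-\cP_0 & \cP_{01}\\
\cP_{10} & \cP_1\end{matrix}\right),
\end{equation}
with $(\Id-\cP_0)^2=\Id-\cP_0$ and $\cP_1^2=\cP_1,$ since $\cP_0,\cP_1$ are themselves projectors. Thus the diagonal entries $\cP_{00}=\Id-\cP_0$ and $\cP_{11}=\cP_1$ satisfy the hypotheses of Proposition~\ref{prop1}.

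Next I would invoke Proposition~\ref{prop1} with this particular $\cP.$ The proposition produces, for every $(t,s)\in\bbC^2,$ a projector
\begin{equation}
\cP_{t,s}=\left(\begin{matrix} \Id-\cP_0 & t\,\cP_{01}\\
s\,\cP_{10} & \cP_1\end{matrix}\right),
\end{equation}
and in particular the one-parameter family $\{\cP_{t,t}:t\in[0,1]\}$ is a continuous (indeed polynomial) path through projectors, as observed in the Remark following Proposition~\ref{prop1}. At $t=1$ this path is $\cP=\cP_{1,1},$ and at $t=0$ it is the block-diagonal projector
\begin{equation}
\cP_{0,0}=\left(\begin{matrix} \Id-\cP_0 & 0\\
0 & \cP_1\end{matrix}\right)=\cP_d.
\end{equation}
So $\cP$ is deformed through projectors to $\cP_d,$ which is exactly the assertion.

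There is essentially no obstacle here: the corollary is a formal consequence of the two propositions, and the only thing to check is the bookkeeping that the diagonal blocks coming out of Proposition~\ref{prop2} are exactly $\cP_{00}=\Id-\cP_0$ and $\cP_{11}=\cP_1$ (up to the sign convention on $\bar Y_0$), so that Proposition~\ref{prop1} applies verbatim. One might also want to remark that the deformation stays within the class of pseudodifferential projections (the off-diagonal blocks $\cP_{01},\cP_{10}$ being smoothing operators, scaling them by $t$ keeps us in $\Psi_H^0\oplus(\text{smoothing})$), but this is immediate from the block structure and requires no work. If a cleaner phrasing is wanted, one can simply write ``Propositions~\ref{prop1} and~\ref{prop2} imply the claim: apply Proposition~\ref{prop1} to the Calderon projector furnished by Proposition~\ref{prop2} and take $\cP_{t,t},$ $t\in[0,1].$''
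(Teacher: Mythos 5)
Your argument is exactly the one the paper intends: Proposition~\ref{prop2} supplies a Calderon projector whose diagonal blocks $\Id-\cP_0$ and $\cP_1$ are projectors, and Proposition~\ref{prop1} (via the family $\cP_{t,t},$ $t\in[0,1],$ noted in the remark) deforms it through projectors to the block-diagonal $\cP_d.$ The paper gives no separate proof beyond stating that the two propositions imply the corollary, so your writeup matches its reasoning and is correct.
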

\begin{remark} Note that if $\bY_0$ is a $\spnc$-boundary, then $\bY_1$ is
  the $\spnc$-boundary of $\bX_1.$
\end{remark}

These results have a natural generalization when $X_0$ has many boundary
components. Suppose that $bX_0$ has several components, $Y_1,\dots, Y_N.$ We
group these boundary components into disjoint (non-empty) subsets
\begin{equation}
\begin{split}
&Y^j=\amalg_{l=m_j}^{m_{j+1}-1}Y_{l},\quad j=1,\dots, J,\text{ where }\\
&1=m_1<m_2<\cdots<m_J<m_{J+1}=N+1,
\end{split}
\end{equation}
with the property that each collection $\bY^j$ is the boundary of a
$\spnc$-manifold, $X_j.$ Gluing along these collections of boundary components
we obtain a $\spnc$-manifold $X^1=X_0\amalg X_2\amalg\cdots\amalg X_J,$ with
boundary equal to $Y^1.$ Let $\hX^1$ denote the invertible double of $X^1,$ and
$\hQ$ its fundamental solution. There is a Calderon projector for $X_0$ that
can be deformed to a block diagonal matrix, with one block for each collection
of boundary components $Y^j.$

For each $2\leq j\leq J,$ let $\cP^j$ denote the Calderon projector, defined by
$\hQ,$ for the manifold $X_j,$ and $\cP^1$ the Calderon projector for $X^1.$
For each $2\leq k\leq J,$ let $\tcP^k$ denote the Calderon projector, defined
by $\hQ,$ for the manifold $\tX_k=X_0\amalg X_k\amalg\cdots\amalg X_J.$ With these
preliminaries we can state the following theorem.
\begin{theorem} With $X_0, Y^1,\dots, Y^J$ as above, the Calderon projector,
  $\cP^0,$ for $X_0,$ can be deformed through projectors to the block diagonal
  matrix:
\begin{equation}
\cP^0_0=\left(\begin{matrix} \cP^1&0 && \cdots& 0\\
0&\Id-\cP^2& 0&\cdots &\vdots\\
\vdots&&\ddots&& 0\\
0&\cdots&&&\Id-\cP^J
\end{matrix}
\right)
\end{equation}
\end{theorem}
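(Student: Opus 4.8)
The plan is to mimic the two-component argument of Proposition~\ref{prop2} and its corollary, but organized around the collection $Y^1,\dots,Y^J$ of boundary groupings. First I would set up the ambient geometry exactly as in the statement: glue $X_0$ to $X_2,\dots,X_J$ along the boundary groups $\bY^2,\dots,\bY^J$ (flattening the $\spnc$-structure in added collars as needed, using Lemma 8 of~\cite{Epstein44}), obtaining $X^1$ with $bX^1=Y^1$, and then double $X^1$ across $Y^1$ to get the invertible double $\hX^1$ with fundamental solution $\hQ$. The key point, as in Proposition~\ref{prop2}, is that $X_0$, each $X_j$, each partial union $\tX_k=X_0\amalg X_k\amalg\cdots\amalg X_J$, and $X^1$ itself are all open submanifolds of $\hX^1$, so $\hQ$ constructs Calderon projectors $\cP^0,\cP^j,\tcP^k,\cP^1$ all acting on sections of $\Spn$ over the relevant boundaries, all compatible in the sense of~\eqref{eqn31}: for any such domain $D$ one has $\cP_{D_+}+\cP_{D_-}=\Id$ as operators on $bD$.

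Next I would read off the block structure of $\cP^0$ with respect to the splitting $\Spn\restrictedto_{bX_0}\simeq\bigoplus_{j=1}^J\Spn\restrictedto_{Y^j}$. The $(j,j)$ diagonal block is obtained by taking boundary values (from the $X_0$ side) of harmonic spinors on $X_0$ and restricting the data to $Y^j$; by the ``Calderon projector as a limit'' description together with~\eqref{eqn31} applied in $\hX^1$, this block is precisely $\cP^1$ for $j=1$ (the complement of $X_0$ inside $X^1$ across $Y^1$ is the doubled piece whose Calderon projector is $\cP^1$, so the $X_0$-side projector on $Y^1$ is $\cP^1$ — here one must be careful with which side is which, exactly as in the sign bookkeeping $\Id-\cP_0$ vs $\cP_1$ in Proposition~\ref{prop2}), and for $2\le j\le J$ the complement of $X_0$ across $Y^j$ inside $\hX^1$ is exactly $X_j$ together with the rest, so the relevant diagonal block is $\Id-\cP^j$. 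In particular each diagonal block is a genuine projector: $(\cP^1)^2=\cP^1$ and $(\Id-\cP^j)^2=\Id-\cP^j$. The off-diagonal blocks are smoothing, since two fundamental solutions differ by a smoothing operator and the off-diagonal entries measure interaction of boundary values across disjoint components.

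Then I would run the homotopy argument. The generalization of Proposition~\ref{prop1} to $J\times J$ block matrices is the routine observation: if $\cP=(\cP_{jk})$ is a projector whose diagonal blocks are themselves projectors, then expanding $\cP^2=\cP$ blockwise gives $\sum_k\cP_{jk}\cP_{kl}=\cP_{jl}$, and on the diagonal $\sum_{k\ne j}\cP_{jk}\cP_{kj}=0$ forces (after using the diagonal relations) that $\cP_{jk}\cP_{kl}=0$ whenever the indices are not all equal, and $\cP_{jj}\cP_{jk}+\cP_{jk}\cP_{kk}=\cP_{jk}$; these relations are exactly what is needed to check that scaling all off-diagonal entries by a common parameter $\tau$ keeps $\cP_\tau=(\tau^{1-\delta_{jk}}\cP_{jk})$ a projector for every $\tau\in\bbC$, giving a smooth path from $\cP^0=\cP_1$ to the block-diagonal $\cP^0_0=\cP_0$. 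Combining with the identifications of the diagonal blocks above yields the stated form of $\cP^0_0$. The main obstacle, as in Proposition~\ref{prop2}, is purely bookkeeping: correctly identifying which diagonal block is $\cP^j$ versus $\Id-\cP^j$ — this depends on tracking orientations (which side of each $Y^j$ lies in $X_0$) through the gluing and the jump formula, and on verifying that the ``limit from the $X_0$ side'' of $\hQ[\bc(dr)f\otimes\delta(r)]$ restricted to $Y^j$ really is the Calderon projector $\tcP$-type operator one expects. Once the $J=2$ case (Proposition~\ref{prop2}) is granted, the general case follows by the same local computation near each $Y^j$ together with the algebraic lemma above, so the only real work is the careful statement of the sign conventions.
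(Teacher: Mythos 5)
Your geometric setup (gluing to form $X^1$, doubling, defining every Calderon projector from the single fundamental solution $\hQ$) and your identification of the diagonal blocks of $\cP^0$ as $\cP^1,\Id-\cP^2,\dots,\Id-\cP^J$ agree with the paper. The genuine gap is the ``routine observation'' you use for the homotopy: it is \emph{false} that a projector whose diagonal blocks are projectors stays a projector when all off-diagonal blocks are scaled by a common parameter, once $J\geq 3$. Writing $\cP=D+N$ with $D$ the block diagonal, $\cP^2=\cP$ and $D^2=D$ give only $DN+ND+N^2=N$; blockwise this is $\sum_{k\neq j}\cP_{jk}\cP_{kj}=0$ on the diagonal and $\cP_{jj}\cP_{jl}+\cP_{jl}\cP_{ll}+\sum_{k\neq j,l}\cP_{jk}\cP_{kl}=\cP_{jl}$ off it. These do \emph{not} force $\cP_{jk}\cP_{kl}=0$ for mixed indices (a sum of products can vanish without each term vanishing, and the cross terms can be absorbed by the linear terms), whereas your path $\cP_\tau=D+\tau N$ consists of projectors for all $\tau$ only if $N^2=0$ and $DN+ND=N$. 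A concrete counterexample (finite rank, hence realizable by smoothing off-diagonal blocks) is
\begin{equation}
P=\left(\begin{matrix}1&-1&1\\ 0&1&0\\ 0&1&0\end{matrix}\right),\qquad
P_\tau=\left(\begin{matrix}1&-\tau&\tau\\ 0&1&0\\ 0&\tau&0\end{matrix}\right),
\end{equation}
where $P^2=P$ and the diagonal blocks are projectors, but $(P_\tau^2)_{12}=-2\tau+\tau^2\neq-\tau$ unless $\tau\in\{0,1\}$. So your single homotopy is unjustified as stated.

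The paper avoids this by peeling off one boundary group at a time, which is exactly why the intermediate projectors $\tcP^k$ for $\tX_k=X_0\amalg X_k\amalg\cdots\amalg X_J$ are introduced. With respect to the two-block splitting $\Spn\rst_{Y^1\amalg\cdots\amalg Y^{J-1}}\oplus\Spn\rst_{Y^J}$ one has
\begin{equation}
\cP^0=\left(\begin{matrix}\tcP^{J}&A_J\\ B_J&\Id-\cP^J\end{matrix}\right),
\end{equation}
and the genuine ($2\times 2$) Proposition~1 deforms $A_J,B_J$ to zero through projectors. The key point is that the surviving upper-left corner is again a Calderon projector defined by $\hQ$, namely $\tcP^{J}$, so the same two-block statement applies to it with $Y^{J-1}$ split off, and so on down to $\tcP^2=\cP^1$; the desired deformation is the concatenation of these $J-1$ homotopies, not a one-shot scaling. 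If you want to rescue the simultaneous scaling, you would have to establish the identities $N^2=0$ and $DN+ND=N$ for the specific operator $\cP^0$ by a geometric argument about compositions of the layer potentials across distinct boundary groups; they do not follow from the block algebra you invoke.
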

\begin{proof} We split $\Spn\restrictedto_{bX}$ into
\begin{equation}
\Spn\restrictedto_{bX}=\Spn\restrictedto_{Y^1\amalg\cdots\amalg Y^{J-1}}\oplus
\Spn\restrictedto_{Y^J}.
\end{equation}
In the notation introduced before the theorem  the projector, $\cP^0$ then takes the form
\begin{equation}
\cP^0=\left(\begin{matrix} \tcP^{J} & A_J\\
B_J & \Id-\cP^J\end{matrix}\right).
\end{equation}
Here $A_J, B_J$ are smoothing operators. As $\cP^0, \tcP^{J}$ and $\cP^J$ are all
projectors,  Proposition~\ref{prop1} shows
that 
\begin{equation}
\cP^0_{1t}=
\cP^0=\left(\begin{matrix} \tcP^{J} & tA_J\\
tB_J & \Id-\cP^J\end{matrix}\right)
\end{equation}
is a one parameter family of projectors, and we can therefore deform to
\begin{equation}
\cP^0_{10}=\left(\begin{matrix} \tcP^{J} & 0\\
0 & \Id-\cP^J\end{matrix}\right).
\end{equation}
For $3 \leq k\leq J,$ we see that
\begin{equation}
\Spn\restrictedto_{b\tX^k}\simeq\Spn\restrictedto_{Y^1\amalg\cdots\amalg
  Y^{k-2}}\oplus \Spn\restrictedto_{Y^{k-1}},
\end{equation} and, with respect to this splitting, the projector $\tcP^k,$ is of the form
\begin{equation}
\tcP^{k}=\left(\begin{matrix} \tcP^{k-1} & A_{k-1}\\
B_{k-1} & \Id-\cP^{k-1}\end{matrix}\right).
\end{equation}
Repeating this argument recursively, along with the fact that $\cP^1=\tcP^2,$
leads to a homotopy through projectors (with constant block diagonal) from
$\cP^0$ to $\cP^0_0.$
\end{proof}

\section{Analysis on manifold with several boundary components}
In this section we study the index of the $\spnc$-Dirac operator on a manifold
with several boundary components, some pseudoconvex and some pseudoconcave. For
example, let $X_{01},$ have two boundary components, $Y_0, Y_1.$ We suppose
that the $\spnc$-structure on $X_{01}$ is induced by almost complex structures
in neighborhoods of its boundary components. We also assume that $Y_1$ is
pseudoconvex and $Y_0$ is pseudoconcave, with respect to the corresponding
almost complex structures.  The boundary components, $Y_0, Y_1$ are contact
manifolds. We let $\cS_0,\cS_1$ be generalized Szeg\H o projectors defined on
$(Y_0,H_0),$ $(Y_1,H_1),$ respectively. Along with the almost complex
structures, these define projectors, $\cR_{0+}, \cR_{1+}$ acting on sections of
the spin-bundle restricted to the boundary. Let $\eth_{X_{01}}$ denote the
$\spnc$-Dirac operator on $X_{01}.$ We let $(\eth_{X_{01}},
[(\Id-\cR_{0+}),\cR_{1+}])$ denote the $\spnc$-Dirac operator acting on smooth
spinors $\sigma,$ which satisfy:
\begin{equation}
(\Id-\cR_{0+})[\sigma\restrictedto_{Y_0}]=0\text{ and
  }\cR_{1+}[\sigma\restrictedto_{Y_1}]=0.
\end{equation}

In~\cite{Epstein3,Epstein44} we established the analytic properties of these boundary
value problems by studying the comparison operator:
\begin{equation}
\cT=\cR\cP+(\Id-\cR)(\Id-\cP),
\end{equation}
here $\cR$ is either the pseudoconvex or pseudoconcave modification of the
$\dbar$-Neumann condition and $\cP$ is a Calderon projector for $\eth.$  The
analytic results  follow from the existence of a parametrix, $\cU,$ for $\cT$
satisfying 
\begin{equation}
\begin{split}
\cT\cU&=\Id-K_1\\
\cU\cT&=\Id-K_2,
\end{split}
\label{parmeqn}
\end{equation}
where $K_1, K_2$ are smoothing operators on $bX.$ The operator $\cU$ belongs to
the extended Heisenberg calculus on $bX$ and its construction is entirely
microlocal. The input from the $\spnc$-structure/$\spnc$-Dirac operator is that
coming from the principal symbol of the Calderon projector.

Thus far, we have only given the complete details of this construction for
$(\eth,\cR_+)$ on a strictly pseudoconvex $\spnc$-manifold. Because these
results rest entirely upon the construction of $\cU,$ they also hold for
$\Id-\cR_+$ on a strictly pseudoconcave manifold: Clearly $\cT$ is symmetric in
$\cR$ and $\Id-\cR.$ When combined with the fact that the construction of $\cU$
uses only the principal symbol of the Calderon projector, and
$\sigma_0(\cP_-)=\sigma_0(\Id-\cP_+),$ we see that it makes no difference
whether we are working on the boundary of a pseudoconvex manifold using the
boundary condition $\cR_+,$ or on a pseudoconcave manifold using the boundary
condition $\Id-\cR_+.$

Let $X$ be a $\spnc$-manifold with boundary, $Y=Y_1\amalg\cdots\amalg Y_N.$
Suppose that an almost complex structure is defined in a neighborhood of the
each boundary component, inducing the given $\spnc$-structure, such that each
boundary component is either strictly pseudoconvex on strictly
pseudoconcave. Let $\cP_+$ be the Calderon projector defined on $X$ by
including $X$ into a compact $\spnc$-manifold, $\tX$ with an invertible Dirac
operator. Let $\cP_{-}$ denote the Calderon projector for $\tX\setminus X;$ it
is important that
\begin{equation}
\cP_++\cP_-=\Id.
\label{eqn54}
\end{equation}
For a Calderon projector defined by embedding $X$ into a compact manifold with
invertible Dirac-operator the following result, which is Proposition 11
in~\cite{Epstein44}, holds:
\begin{proposition}\label{prop11}
Let $X$ be a $\spnc$-manifold with boundary embedded into $\tX$ a compact
$\spnc$-manifold with invertible Dirac operator. Let $t$ be a defining function
for $bX$ such that $t<0$ on $X,$ $\grad_g t$ is orthogonal to $TbX$ and
$\|dt\|=1$ along $bX.$ If $\cP^{\eo}_{\pm}$ are Calderon projectors defined by
the fundamental solution to $\eth$ on $\tX$ then
\begin{equation}
\cP^{\eo}_{\pm}=\bc(\pm dt)\cP^{\ooee}_{\mp}\bc(\pm dt)^{-1}.
\label{pmreln}
\end{equation}
\end{proposition}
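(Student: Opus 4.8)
The plan is to prove~\eqref{pmreln} as an \emph{exact} identity of idempotents, by showing that its two sides have the same range and the same null space; the symmetry that makes this work is the orientation-reversing involution of the invertible double. First I would reduce to a single case: since $\bc(-dt)=-\bc(dt)$, conjugation by $\bc(+dt)$ and by $\bc(-dt)$ are one and the same operation, and the jump formula $\cP^{\eo}_++\cP^{\eo}_-=\Id$ of~\eqref{eqn54}, used in each parity, makes the four assertions in~\eqref{pmreln} equivalent (replace $\cP^{\even}_+$ by $\Id-\cP^{\even}_+=\cP^{\even}_-$ and conjugate by $\bc(dt)^{\pm1}$, using $\bc(dt)^2=-\Id$). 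So it is enough to prove $\cP^{\even}_+=\bc(dt)\,\cP^{\odd}_-\,\bc(dt)^{-1}$, and I would do this for the Calderon projectors built from the invertible double $\hX=X\amalg_{bX}\bX$ with its symmetric product collar $g\restrictedto_{V}=dt^2+g_Y$, $V\simeq Y\times(-1,1)$ — the construction relevant to the applications. (A short check with the symbol of the Calderon projector and the Clifford relations shows the two sides of~\eqref{pmreln} already have the same principal symbol, so only the exact equality is in question.)

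The key input is a symmetry of $\hX$. Write $X_+\simeq X$ and $X_-\simeq\bX$ for the two halves of $\hX$, and let $\iota\colon\hX\to\hX$ be the involution exchanging them and fixing $bX$ pointwise; on the collar $\iota$ is $(y,t)\mapsto(y,-t)$, hence an orientation-reversing isometry for the symmetric product metric. Because the $\spnc$-structure carried over $X_-$ in the double construction is that of $\bX$, the map $\iota$ is $\spnc$-compatible and lifts to a bundle map $\hat\iota\colon\hSpn\to\hSpn$ covering $\iota$. Since $\iota$ reverses orientation, $\hat\iota$ anticommutes with the chirality operator and so interchanges $\hSpn^{\even}$ and $\hSpn^{\odd}$; since $\iota$ is an isometry, $\hat\iota$ commutes with $\heth$; and over $bX$, where $\iota$ is the identity, $\hat\iota$ equals, up to a unit constant, Clifford multiplication by $dt$ — precisely the identification $\bc(-dt)$ used in~\cite{BBW} to glue $\hSpn$ together. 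Hence $\hat\iota$ carries the $\heth$-harmonic sections of $\hSpn^{\even}$ over $X_+$ bijectively onto the $\heth$-harmonic sections of $\hSpn^{\odd}$ over $X_-$, and similarly with $X_+$ and $X_-$ interchanged.

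To finish, recall from Section~\ref{sec2} that $\cP^{\even}_+$ is the idempotent on $\CI(bX;\hSpn^{\even}\restrictedto_{bX})$ whose range is the space of boundary values of $\heth$-harmonic even spinors on $X_+$ and — by $\cP^{\even}_++\cP^{\even}_-=\Id$ — whose null space is the corresponding space of boundary values on $X_-$; likewise $\cP^{\odd}_-=\Id-\cP^{\odd}_+$ has range the boundary values of $\heth$-harmonic odd spinors on $X_-$ and null space those on $X_+$. Conjugating $\cP^{\odd}_-$ by $\bc(dt)$ and using the previous paragraph (the scalar $-1$ coming from $\bc(dt)^2$ is irrelevant at the level of subspaces), $\bc(dt)\,\cP^{\odd}_-\,\bc(dt)^{-1}$ has exactly the same range and null space as $\cP^{\even}_+$. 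Since both are idempotents — each is the projection onto their common range along their common null space — they are equal; the reduction above then yields the other three cases.

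The hard part will be the second paragraph: verifying that $\iota$ lifts to a bundle map of $\hSpn$ that commutes with $\heth$, interchanges the grading, and restricts over $bX$ to $\bc(dt)$. This needs the product deformation of the metric near $bX$ to be reflection-symmetric (so that $\iota$ is an honest isometry and $\hat\iota$ commutes with $\heth$), the two $\spnc$-collars glued in the double construction to be $\iota$-compatible, and — the most delicate point — the Clifford phase in the spin lift of the reflection to be tracked and identified with the $\bc(dt)$ occurring in~\eqref{pmreln}. I would also state explicitly that $\tX$ is taken to be the invertible double here, which is the setting arising in the sequel.
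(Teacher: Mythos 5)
There is a genuine gap, and it concerns exactly the point the proposition is designed to make. You prove (modulo the deferred lift construction) only the special case in which $\tX$ is the invertible double of $X$ with a reflection-symmetric product collar, whereas the statement asserts~\eqref{pmreln} for Calderon projectors defined by the fundamental solution on an \emph{arbitrary} compact $\tX$ with invertible Dirac operator into which $X$ is embedded. The paper's proof is precisely the observation that the argument for Proposition 11 of~\cite{Epstein44}, which works directly with the fundamental solution (boundary limits of $Q[\bc(dt)\,\cdot\,\otimes\delta(t)]$, Green's formula and the formal self-adjointness of $\eth$), never uses the double structure of $\tX$, only invertibility of $\eth$ there. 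Your restriction in the opposite direction is not harmless: in the later uses of this proposition (Proposition~\ref{prop2}, the remark following Theorem~\ref{thm5}, the proof of Theorem~\ref{thm7}) the manifold at hand, e.g.\ $X_{01}$ with boundary $Y_0\amalg Y_1$, is embedded in the invertible double $\hX_1$ of the \emph{larger} manifold $X_1=X_0\amalg_{Y_0}X_{01}$, which is not a double of $X_{01}$ at all. In that setting there is no isometric involution of $\tX$ interchanging $X$ and $\tX\setminus X$, so the orientation-reversing reflection $\iota$ on which your entire argument rests does not exist, and your claim that the double of $X$ is ``the setting arising in the sequel'' is incorrect. The method therefore cannot deliver the proposition as stated and as needed.

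A secondary point: even in the double case the proposal is incomplete exactly where the content lies. The existence of a lift $\hat\iota$ of the reflection to $\hSpn$ that intertwines $\heth$ with $\pm\heth$ (either sign would do for harmonic spinors, but it must be proved, not asserted), exchanges chirality, and restricts over $bX$ to $\bc(dt)$ up to a unimodular constant is the substantive step, and you defer it as ``the hard part.'' The bookkeeping in your first and third paragraphs is fine --- the four identities in~\eqref{pmreln} are indeed equivalent via $\cP_++\cP_-=\Id$ and $\bc(dt)^2=-\Id$, and two idempotents with the same range and null space coincide --- but without the middle step there is no proof, and with it you would still only have the special case. To prove the proposition in the stated generality you should instead follow the route of~\cite{Epstein44}, exploiting only the jump formula for the fundamental solution on $\tX$ and the symmetry of $\eth$, which is what the paper does.
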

\begin{proof} In the statement of Proposition 11
  in~\cite{Epstein44} it is assumed  that $\tX$ is an invertible double for
  $X,$ however this hypothesis is not used in the proof.  All that is needed is
  the assumption that the Dirac operator on $\tX$ is invertible and the
  Calderon projector is constructed using the fundamental solution defined on $\tX.$
\end{proof}

With respect to the splitting
\begin{equation}
\Spn\rst_{bX}=\Spn\rst_{Y_1}\oplus\cdots\oplus \Spn_{Y_N},
\end{equation}
the Calderon projector takes the form:
\begin{equation}
\cP=\left(\begin{matrix} \cP_{11} &\cP_{12}&\cdots&\cP_{1n}\\
\cP_{21} &\cP_{22}&\cdots&\cP_{2n}\\
\vdots&\vdots&&\vdots\\
\cP_{n1}&\cP_{n2}&\cdots&\cP_{nn}\end{matrix}\right).
\end{equation} 
Usually we will make assumptions that imply $\cP_{jj}^2=\cP_{jj}$ for $1\leq
j\leq N,$ but in all cases $\cP_{jj}^2-\cP_{jj}$ and $\cP_{jk}$ for $j\neq k$
are smoothing operators.  For each $j$ we choose a generalized Szeg\H o
projector, $\cS_j\in\Psi^{0}_{H_j}(Y_j).$ Let $\cR_{j+}$ denote the modified
pseudoconvex $\dbar$-Neumann condition defined by $\cS_j.$ For a pseudoconvex
boundary component, $Y_j,$ we let
\begin{equation}
\cT_{j}^{+}=\cR_{j+}\cP_{jj}+(\Id-\cR_{j+})(\Id-\cP_{jj}),
\end{equation}
for a pseudoconcave boundary component, $Y_k,$ we let
\begin{equation}
\cT^{-}_{k}=(\Id-\cR_{k+})\cP_{kk}+\cR_{k+}(\Id-\cP_{kk}).
\end{equation}
Define the function $\epsilon_j=+$ if $Y_j$ is pseudoconvex and $-$ otherwise.

The remarks above easily imply the following result.
\begin{proposition}\label{prop3} For each boundary component, $Y_j$ the operator
  $\cT_{j}^{\epsilon_j}$ is an elliptic element in the extended Heisenberg
  algebra. There is a parametrix $\cU_{j}^{\epsilon_j}$ so that, for smoothing
  operators $K_{j1}, K_{j2},$ we have
\begin{equation}
\cT_{j}^{\epsilon_j}\cU_{j}^{\epsilon_j}=\Id-K_{j1}\text{ and }
\cU_{j}^{\epsilon_j}\cT_{j}^{\epsilon_j}=\Id-K_{j2}.
\end{equation}
\end{proposition}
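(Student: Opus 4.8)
The plan is to reduce the statement, one boundary component at a time, to the single--boundary analysis already carried out in~\cite{Epstein3,Epstein44}. Fix $Y_j$ and recall that the comparison operator studied there has exactly the shape $\cR\cP+(\Id-\cR)(\Id-\cP)$, with $\cR$ the pseudoconvex modification of the $\dbar$--Neumann condition attached to a generalized Szeg\H o projector and $\cP$ a Calderon projector; it was shown in~\cite{Epstein44} that this operator is elliptic in the extended Heisenberg calculus and admits a two--sided parametrix modulo smoothing, the construction being microlocal and depending on the $\spnc$--Dirac operator only through the principal symbol of $\cP$. Since $\cR_{j+}$ is itself a sum of the Heisenberg operator $\cS_j\in\Psi^0_{H_j}(Y_j)$ and classical pseudodifferential operators coming from the $\dbar$--Neumann conditions, and $\cP_{jj}$ is a classical pseudodifferential operator on $Y_j$, the operator $\cT_j^{\epsilon_j}$ is a well--defined element of the extended Heisenberg calculus on $Y_j$; the whole task is therefore to identify its principal symbol with that of the model operator of~\cite{Epstein44} and then quote that reference.

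I would first treat a pseudoconvex component $Y_j$, so $\epsilon_j=+$ and $\cT_j^+=\cR_{j+}\cP_{jj}+(\Id-\cR_{j+})(\Id-\cP_{jj})$. Because the off--diagonal blocks $\cP_{jk}$ ($k\neq j$) are smoothing, the restriction to $T^*Y_j$ of the principal symbol of the Calderon projector $\cP$ coincides with the principal symbol of $\cP_{jj}$; and this is the usual Calderon symbol, determined by the germ of the $\spnc$--structure along $Y_j$ alone and, in particular, a projection symbol. Hence $\cT_j^+$ has, microlocally on $Y_j$, exactly the principal symbol of the comparison operator for a strictly pseudoconvex boundary equipped with the generalized Szeg\H o projector $\cS_j$. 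By~\cite{Epstein44} that symbol is elliptic in the extended Heisenberg calculus, so the microlocal parametrix construction there applies verbatim (it never uses more than $\sigma_0(\cP)$) and produces $\cU_j^+$ with $\cT_j^+\cU_j^+=\Id-K_{j1}$ and $\cU_j^+\cT_j^+=\Id-K_{j2}$, $K_{j1},K_{j2}$ smoothing on $Y_j$.

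For a pseudoconcave component $Y_k$ I would write $\cP'=\Id-\cP_{kk}$; then $\cT_k^-=(\Id-\cR_{k+})\cP_{kk}+\cR_{k+}(\Id-\cP_{kk})=\cR_{k+}\cP'+(\Id-\cR_{k+})(\Id-\cP')$, i.e.\ $\cT_k^-$ has precisely the form of a pseudoconvex comparison operator, but with the interior Calderon projector replaced by $\cP'$. Now $\sigma_0(\cP_{kk})$ is the principal symbol of the interior Calderon projector along $Y_k$, so by the jump relation~\eqref{eqn54} (equivalently $\sigma_0(\cP_-)=\sigma_0(\Id-\cP_+)$) the symbol of $\cP'$ is the principal symbol of the Calderon projector for the complementary, pseudoconvex, side along $Y_k$. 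Thus $\cT_k^-$ is again, microlocally, of the type treated in~\cite{Epstein44}, and the same parametrix construction yields $\cU_k^-$. This is exactly the point already made in the discussion preceding the proposition: $\cT$ is symmetric in $\cR\leftrightarrow\Id-\cR$, and since the parametrix depends only on the Calderon symbol and $\sigma_0(\cP_-)=\sigma_0(\Id-\cP_+)$, the pseudoconcave case with $(\Id-\cR_{k+},\cP_{kk})$ behaves exactly like the pseudoconvex case with $(\cR_{k+},\Id-\cP_{kk})$.

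The one point deserving care — and the main obstacle, such as it is — is the identification of $\sigma_0(\cP_{jj})$ with the principal symbol of a genuine one--component Calderon projector on $Y_j$: this is what renders the passage from a single boundary to several boundary components invisible at the level of the symbol computations underlying the subelliptic estimate. It is immediate from the locality of principal symbols together with the smoothing nature of the off--diagonal blocks, but it is the hinge on which everything turns. Note in particular that we do \emph{not} need $\cP_{jj}^2=\cP_{jj}$ here (its principal symbol is automatically idempotent, which is all the ellipticity/parametrix argument uses); the stronger operator identity, available under the hypotheses of Section~\ref{sec2}, is needed only later, when the component parametrices are assembled into an index formula.
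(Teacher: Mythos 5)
Your argument is correct and is essentially the paper's own: the proposition is stated there as an immediate consequence of the preceding remarks, namely that the parametrix construction of~\cite{Epstein3,Epstein44} is purely microlocal, uses only the principal symbol of the Calderon projector (so the smoothing off-diagonal blocks and hence the other boundary components are irrelevant), and that the symmetry of $\cT$ in $\cR\leftrightarrow\Id-\cR$ together with $\sigma_0(\cP_-)=\sigma_0(\Id-\cP_+)$ handles the pseudoconcave components. You have simply spelled out these reductions component by component, which is exactly the intended proof.
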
 

Now we order the boundary components so that  $Y_1,\dots, Y_L$ are strictly
pseudoconcave and $Y_{L+1},\dots,Y_N$ are strictly pseudoconvex. Set
\begin{equation}
\cR=\left(\begin{matrix}\Id-\cR_{1+}& 0 &\cdots&0&\cdots&&0\\
\vdots&\ddots&&\vdots&&&\vdots\\
0&\cdots&\Id-\cR_{L+}&0&\cdots&&0\\
0&\cdots&0&\cR_{(L+1)+}&0&\hdots&\vdots\\
\vdots&&&&\ddots&&0\\
0&\cdots&0&0&\cdots&0&\cR_{N+}\end{matrix}\right),
\label{eqn61}
\end{equation}
and let 
\begin{equation}
\cT=\cR\cP+(\Id-\cR)(\Id-\cP).
\end{equation}
The following relationship between the chiral parts $\cR^{\even}$ and
$\cR^{\odd}$ is a consequence of the formal self adjointness of $\cR;$ it is
proved in~\cite{Epstein44}.
\begin{proposition} The chiral parts satisfy:
\begin{equation}
\cR^{\even}=\bc(dt)(\Id-\cR^{\odd})\bc(dt)^{-1}
\label{eqn63}
\end{equation}
\end{proposition}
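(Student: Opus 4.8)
The plan is to reduce the claimed identity to the block-diagonal structure of $\cR$ in~\eqref{eqn61} together with the relation~\eqref{pmreln} from Proposition~\ref{prop11}, specialized to $t=0$, i.e. to the boundary itself. First I would recall that $\cR$ is, by construction, the orthogonal projector onto the domain of the sub-elliptic boundary condition, and hence formally self adjoint: $\cR^*=\cR$. The chiral decomposition $\Spn\rst_{bX}=\Spn^{\even}\rst_{bX}\oplus\Spn^{\odd}\rst_{bX}$ is intertwined by Clifford multiplication by a unit covector, and since $dt$ restricts to a unit covector normal to $bX$, $\bc(dt)$ gives a unitary isomorphism $\Spn^{\even}\rst_{bX}\to\Spn^{\odd}\rst_{bX}$ with $\bc(dt)^{-1}=-\bc(dt)=\bc(dt)^*$ on the boundary. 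So the content of~\eqref{eqn63} is that conjugating the even-chirality piece of $\cR$ by $\bc(dt)$ produces the \emph{complementary} projector to the odd-chirality piece.

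Next I would verify this componentwise. Because $\cR$ is block-diagonal with respect to the splitting $\Spn\rst_{bX}=\bigoplus_j\Spn\rst_{Y_j}$, and because $\bc(dt)$ respects each summand $\Spn\rst_{Y_j}$ (it acts fiberwise), it suffices to check~\eqref{eqn63} on each $Y_j$ separately. On a strictly pseudoconvex component the relevant block of $\cR$ is $\cR_{j+}$; on a strictly pseudoconcave component it is $\Id-\cR_{j+}$. Thus the identity to be proved on $Y_j$ is
\begin{equation}
\cR_{j+}^{\even}=\bc(dt)(\Id-\cR_{j+}^{\odd})\bc(dt)^{-1}
\end{equation}
in the pseudoconvex case and the same with $\cR_{j+}$ replaced by $\Id-\cR_{j+}$ in the pseudoconcave case; but the latter is equivalent to the former after subtracting from $\Id$ and conjugating, so the two cases are formally the same statement and only one needs to be proved. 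This is precisely the single-boundary-component assertion established in~\cite{Epstein44} (the formal self adjointness of $(\eth,\cR_+)$, together with the adjoint relation $(\eth^{\eo},\cR_+^{\eo})^*=\overline{(\eth^{\ooee},\cR_+^{\ooee})}$ recalled in the introduction, forces exactly this conjugation relation between the even and odd projectors). So the final step is to assemble the per-component identities back into the block-diagonal matrices~\eqref{eqn61}, using that $\bc(dt)$ is block-diagonal, to conclude~\eqref{eqn63} for $\cR$.

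The main obstacle is bookkeeping rather than analysis: one must make sure that the sign conventions in $\bc(\pm dt)$ (Proposition~\ref{prop11} involves $\bc(\pm dt)$ with the sign tied to which side of the hypersurface one approaches, and~\eqref{spnb} versus~\eqref{spnb-} involve $\bc(-dt)$ versus $\bc(dt)$) are consistently tracked through the even/odd identification $\Spn_Y\simeq\Spn^{\even}_X\rst_{bX}$, and that passing from $\cR_{j+}$ to $\Id-\cR_{j+}$ on pseudoconcave components does not introduce an unwanted sign. Once the conventions are pinned down, the identity is an immediate consequence of the formal self adjointness of $\cR$ and the single-component result of~\cite{Epstein44}, invoked blockwise.
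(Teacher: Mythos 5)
Your proposal is correct and takes essentially the same route as the paper, which simply attributes the identity to the formal self adjointness of $\cR$ and the single-boundary-component result proved in~\cite{Epstein44}; your blockwise reduction over the components $Y_j$ and the observation that the pseudoconcave case ($\Id-\cR_{j+}$) is equivalent to the pseudoconvex one after complementation and conjugation just make explicit the assembly the paper leaves implicit. The opening appeal to Proposition~\ref{prop11} is superfluous (that relation concerns the Calderon projectors $\cP_{\pm}$, not $\cR$) and plays no role in the rest of your argument.
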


If we define $\cU$ to be the diagonal matrix with diagonal
\begin{equation}
\cU=\text{diag}(\cU_{1}^{-},\dots,\cU_{L}^{-},
\cU_{L+1}^{+},\dots,\cU_{N}^{+}),
\end{equation}
then Proposition~\ref{prop3}, and the fact that the off-diagonal elements in
$\cP$ are smoothing operators implies the following basic result:
\begin{theorem}\label{thm3} The operator $\cU$ is a parametrix for $\cT.$
\end{theorem}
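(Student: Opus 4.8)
The plan is to reduce the claim to the single-component result already encoded in Proposition~\ref{prop3}, exploiting that the Calderon projector is block-diagonal up to smoothing error. First I would spell out the matrix structure of the composition $\cT\cU$. Since $\cU=\text{diag}(\cU_1^-,\dots,\cU_L^-,\cU_{L+1}^+,\dots,\cU_N^+)$ and $\cR$ is block-diagonal by~\eqref{eqn61}, while $\cP$ has block form with diagonal entries $\cP_{jj}$ and smoothing off-diagonal entries $\cP_{jk}$, $j\neq k$, I would write $\cT=\cR\cP+(\Id-\cR)(\Id-\cP)$ and expand its $(j,k)$ block. The diagonal block $\cT_{jj}$ equals $\cR_{j}\cP_{jj}+(\Id-\cR_j)(\Id-\cP_{jj})$, which is exactly $\cT_j^{\epsilon_j}$ (with $\cR_j=\Id-\cR_{j+}$ when $\epsilon_j=-$ and $\cR_j=\cR_{j+}$ when $\epsilon_j=+$, matching the two definitions of $\cT_j^{\pm}$ given before Proposition~\ref{prop3}). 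The off-diagonal block $\cT_{jk}$ for $j\neq k$ equals $\cR_j\cP_{jk}-(\Id-\cR_j)\cP_{jk}=(2\cR_j-\Id)\cP_{jk}$, which is smoothing because $\cP_{jk}$ is smoothing and $\cR_j$ is a pseudodifferential (extended Heisenberg) operator.

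Next I would compute $\cT\cU$ block by block. The $(j,j)$ block is $\cT_{jj}\cU_j^{\epsilon_j}+\sum_{k\neq j}\cT_{jk}\,(\cU)_{kj}$; but $\cU$ is diagonal so the sum vanishes, leaving $\cT_j^{\epsilon_j}\cU_j^{\epsilon_j}=\Id-K_{j1}$ by Proposition~\ref{prop3}. The $(j,k)$ block with $j\neq k$ is $\cT_{jk}\cU_k^{\epsilon_k}$, a composition of a smoothing operator with an extended Heisenberg operator, hence smoothing. Therefore $\cT\cU=\Id-K_1$ where $K_1$ is the matrix with diagonal entries $K_{j1}$ and smoothing off-diagonal entries — a smoothing operator on $bX$. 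The identical computation with the factors reversed gives $\cU\cT=\Id-K_2$ with $K_2$ smoothing, using $\cU_j^{\epsilon_j}\cT_j^{\epsilon_j}=\Id-K_{j2}$ and the fact that $\cU_k^{\epsilon_k}\cT_{kj}$ is smoothing for $k\neq j$.

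The only genuine point requiring care — and the one I would state explicitly — is that composition of a smoothing operator with an operator in the extended Heisenberg calculus on a compact manifold is again smoothing (equivalently, smoothing operators form a two-sided ideal). This is a standard mapping-property fact: a smoothing operator maps distributions to smooth sections and is continuous in all Sobolev norms, and extended Heisenberg operators act continuously between appropriate (weighted) Sobolev spaces, so either order of composition lands in $\bigcap_s$ of the relevant spaces and has a smooth kernel. There is no analytic obstacle here beyond invoking this; the proof is essentially bookkeeping once the block expansion of $\cT$ is written down. I would therefore present it compactly, perhaps only remarking that the off-diagonal blocks of $\cT$ are smoothing and that $\cU$ is diagonal, so the computation decouples and reduces to the $N$ instances of Proposition~\ref{prop3}.

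\begin{proof} Using the block decomposition of $\cP$ with respect to $\Spn\rst_{bX}=\Spn\rst_{Y_1}\oplus\cdots\oplus\Spn_{Y_N}$, and the fact that $\cR$ is block diagonal, we compute the blocks of $\cT=\cR\cP+(\Id-\cR)(\Id-\cP)$. Writing $\cR_j$ for the $j$th diagonal block of $\cR$ (so $\cR_j=\Id-\cR_{j+}$ for $1\leq j\leq L$ and $\cR_j=\cR_{j+}$ for $L+1\leq j\leq N$), the diagonal block of $\cT$ is
\begin{equation}
\cT_{jj}=\cR_j\cP_{jj}+(\Id-\cR_j)(\Id-\cP_{jj})=\cT_j^{\epsilon_j},
\end{equation}
while for $j\neq k$,
\begin{equation}
\cT_{jk}=\cR_j\cP_{jk}-(\Id-\cR_j)\cP_{jk}=(2\cR_j-\Id)\cP_{jk},
\end{equation}
which is a smoothing operator since $\cP_{jk}$ is smoothing and $\cR_j$ lies in the extended Heisenberg calculus, and smoothing operators form a two-sided ideal. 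Since $\cU=\text{diag}(\cU_1^-,\dots,\cU_L^-,\cU_{L+1}^+,\dots,\cU_N^+)$ is block diagonal, the $(j,j)$ block of $\cT\cU$ is $\cT_j^{\epsilon_j}\cU_j^{\epsilon_j}=\Id-K_{j1}$ by Proposition~\ref{prop3}, and for $j\neq k$ the $(j,k)$ block of $\cT\cU$ is $\cT_{jk}\cU_k^{\epsilon_k}$, again smoothing. Hence $\cT\cU=\Id-K_1$ with $K_1$ smoothing. The computation of $\cU\cT$ is identical: its $(j,j)$ block is $\cU_j^{\epsilon_j}\cT_j^{\epsilon_j}=\Id-K_{j2}$ and its off-diagonal blocks $\cU_k^{\epsilon_k}\cT_{kj}$ are smoothing, so $\cU\cT=\Id-K_2$ with $K_2$ smoothing. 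Thus $\cU$ is a parametrix for $\cT$.
\end{proof}
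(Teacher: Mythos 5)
Your proof is correct and follows essentially the same route as the paper: it isolates the block-diagonal part of $\cT$ (the paper's $\cT_d$), observes that the remainder $(2\cR-\Id)\cP_{od}$ is smoothing, and reduces to Proposition~\ref{prop3} on each boundary component. Your blockwise write-out is just a more explicit rendering of the paper's two-line argument, so nothing further is needed.
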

\begin{proof} Let $\cP_d$ denote the diagonal of $\cP,$ and
  $\cP_{od}=\cP-\cP_d.$ If we let $\cT_d=\cR\cP_d+(\Id-\cR)(\Id-\cP_d),$ then
  Proposition~\ref{prop3} implies that
\begin{equation}
\cU\cT_{d}-\Id\text{ and }\cT_{d}\cU-\Id
\end{equation}
are smoothing operators. As $\cT-\cT_{d}=(2\cR-\Id)\cP_{od}$ is a smoothing
operator it follows immediately that $K_1$ and $K_2$ in
\begin{equation}
\cT\cU=\Id-K_1\text{ and }\cU\cT=\Id-K_2
\label{eqn66}
\end{equation}
are also smoothing operators.
\end{proof}
In the case that the diagonal  of $\cP$ is a projector, this argument gives a
stronger result.
\begin{corollary}\label{cor2} Suppose that $\cP_d^2=\cP_d;$  define
\begin{equation}
\cP_t=\cP-t\cP_{od}\text{ and }
\cT_t=\cR\cP_t+(\Id-\cR)(\Id-\cP_t).
\end{equation}
For each $t,$ $\cP_t$ is a projector, and $\cU$ is a parametrix for $\cT_t,$
with
\begin{equation}
\cT_t\cU=\Id-K_{1t}\text{ and }\cU\cT_t=\Id-K_{2t}.
\end{equation}
The operators $\{(K_{1t}, K_{2t}):\: t\in [0,1]\}$ are a smooth family of
smoothing operators.
\end{corollary}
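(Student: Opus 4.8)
The plan is to reduce Corollary~\ref{cor2} to Theorem~\ref{thm3} plus the observation from Proposition~\ref{prop1} that, when the diagonal is already idempotent, scaling the off-diagonal part preserves projectors. First I would verify that $\cP_t=\cP-t\cP_{od}$ is a projector for each $t$. Writing $\cP_d=\operatorname{diag}(\cP_{11},\dots,\cP_{NN})$ with each $\cP_{jj}^2=\cP_{jj}$, the identity $\cP^2=\cP$ gives, block by block, exactly the relations in~\eqref{eqn34} generalized to the $N$-component splitting: the off-diagonal blocks of $\cP$ multiply to zero among themselves and intertwine the diagonal blocks in the same way as in the proof of Proposition~\ref{prop1}. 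Applying that proposition componentwise — equivalently, factoring the $N$-block matrix into the two-block form $\Spn\rst_{bX}=\Spn\rst_{Y_j}\oplus\bigoplus_{k\neq j}\Spn\rst_{Y_k}$ and invoking the proposition $N$ times — shows that replacing every off-diagonal block $\cP_{jk}$ by $t\cP_{jk}$ yields a projector, so $\cP_t^2=\cP_t$ for all $t\in[0,1]$.

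Next I would run the parametrix argument of Theorem~\ref{thm3} with $\cP_t$ in place of $\cP$. Since $\cP_t$ has the \emph{same} diagonal $\cP_d$ as $\cP$, the associated diagonal comparison operator $\cT_d=\cR\cP_d+(\Id-\cR)(\Id-\cP_d)$ is literally unchanged, and Proposition~\ref{prop3} still gives that $\cU\cT_d-\Id$ and $\cT_d\cU-\Id$ are smoothing. The difference $\cT_t-\cT_d=(2\cR-\Id)(\cP_t-\cP_d)=-t\,(2\cR-\Id)\cP_{od}$ is $t$ times a fixed smoothing operator, hence smoothing for every $t$; composing with $\cU$ (which is in the extended Heisenberg calculus, so preserves smoothing operators) shows $\cU(\cT_t-\cT_d)$ and $(\cT_t-\cT_d)\cU$ are smoothing. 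Adding these to the diagonal estimate gives $\cT_t\cU=\Id-K_{1t}$ and $\cU\cT_t=\Id-K_{2t}$ with $K_{1t},K_{2t}$ smoothing, so $\cU$ is a parametrix for $\cT_t$.

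Finally, for the smoothness-in-$t$ statement I would simply read off the explicit dependence: $K_{1t}=\Id-\cT_t\cU$ and $K_{2t}=\Id-\cU\cT_t$ are affine (indeed linear) in $\cT_t$, and $\cT_t$ is affine in $t$ through the term $-t(2\cR-\Id)\cP_{od}$; hence $t\mapsto K_{it}$ is an affine path in the space of smoothing operators, in particular a smooth family. I do not expect any serious obstacle here: the only point requiring a little care is confirming that the off-diagonal relations forced by $\cP^2=\cP$ are exactly what is needed to apply Proposition~\ref{prop1} in the multi-block setting, and that the proof of Theorem~\ref{thm3} used nothing about $\cP$ beyond its diagonal being (approximately) idempotent and its off-diagonal being smoothing — both of which persist verbatim for $\cP_t$. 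The mild ``hard part,'' if any, is just bookkeeping: making sure the pairwise relations $\cP_{jk}\cP_{kj}=0$, $\cP_{jj}\cP_{jk}+\cP_{jk}\cP_{kk}=\cP_{jk}$, etc., do not interfere when several off-diagonal blocks are scaled simultaneously, which is handled by the factorization into two-block form noted above.
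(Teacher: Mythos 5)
Your second and third steps are exactly the paper's (implicit) argument: the paper offers no separate proof of this corollary beyond the remark that the proof of Theorem~\ref{thm3} ``gives a stronger result,'' and your rerun of that proof -- the diagonal comparison operator $\cT_d$ is unchanged because $\cP_t$ has the same diagonal, the perturbation is a smoothing operator, and $K_{1t},K_{2t}$ depend affinely (hence smoothly) on $t$ -- is precisely what is intended. One harmless slip: $\cP_t-\cP_d=(1-t)\cP_{od}$, so $\cT_t-\cT_d=(1-t)(2\cR-\Id)\cP_{od}$; your expression $-t(2\cR-\Id)\cP_{od}$ is $\cT_t-\cT$, but either way the difference is a smoothing operator depending affinely on $t$, which is all that is used.

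The genuine gap is in your first step, the claim that $\cP_t$ is a projector, once there are three or more diagonal blocks. From $\cP^2=\cP$ and $\cP_d^2=\cP_d$ one gets $\cP_d\cP_{od}+\cP_{od}\cP_d+\cP_{od}^2=\cP_{od}$, and hence $\cP_t^2-\cP_t=-t(1-t)\,\cP_{od}^2$. The block-$(j,j)$ relations give only $\sum_{l\neq j}\cP_{jl}\cP_{lj}=0$, i.e.\ the vanishing of the \emph{diagonal part} of $\cP_{od}^2$ as a sum, not of the individual products $\cP_{jk}\cP_{kj}$, and the $(j,k)$ relations for $j\neq k$ read $\cP_{jj}\cP_{jk}+\cP_{jk}\cP_{kk}+\sum_{l\neq j,k}\cP_{jl}\cP_{lk}=\cP_{jk}$, so the off-diagonal part of $\cP_{od}^2$ need not vanish. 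Thus your assertion that ``the off-diagonal blocks of $\cP$ multiply to zero among themselves'' is not a consequence of the hypotheses (already $3\times 3$ idempotent matrices with $0$--$1$ diagonal give counterexamples), and the linear path $\cP-t\cP_{od}$ need not consist of projectors when $N\geq 3$. The proposed repair -- invoking Proposition~\ref{prop1} $N$ times via the splitting $Y_j$ versus $\bigoplus_{k\neq j}$ -- also fails: after the first scaling, the large complementary diagonal block for the next splitting is no longer idempotent, so the hypothesis of Proposition~\ref{prop1} is lost; this is exactly why the theorem of Section~\ref{sec2} reaches the block-diagonal form by a \emph{nested} sequence of two-block homotopies rather than along $\cP-t\cP_{od}$. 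When there are only two blocks (or the boundary components are grouped into two collections, as in the application in Theorem~\ref{thm7}), $\cP_{od}^2$ is block diagonal and vanishes by the $(j,j)$ relation, and your argument is complete; for general $N$ you need either the additional input $\cP_{od}^2=0$ or a different homotopy. The parametrix and smooth-family assertions do not use idempotency of $\cP_t$ and stand as you proved them.
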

\begin{remark} Note that
\begin{equation}
\cP_0=\cP\text{ and }\cP_1=\cP_d.
\end{equation}
\end{remark}

\section{The relative index formula}
Recall that if $\sigma$ and its distributional derivative, $\eth\sigma,$ both
belong to $L^2(X;\Spn),$ then $\sigma$ has a well defined restriction to $bX$
as an element of the Sobolev space $H^{-\frac 12}(bX;\Spn\restrictedto_{bX}).$
Theorem~\ref{thm3} combined with the arguments in~\cite{Epstein44} prove the
following result:
\begin{theorem}\label{thm4} Let $X$ be a $\spnc$-manifold with boundary, such
  that the $\spnc$-structure is defined in a neighborhood of $bX$ by an almost
  complex structure, making each boundary component of $X$ either strictly
  pseudoconvex or strictly pseudoconcave. If we define the domain for $\eth$ to be
\begin{equation}
\{\sigma\in L^2(X;\Spn):\: \eth \sigma\in L^2(X;\Spn),\,
\cR[\sigma\restrictedto_{bX}]=0\},
\end{equation}
where $\cR$ is defined as in~\eqref{eqn61}, then $\eth$ is a Fredholm
  operator. There is a constant $C$ so that if $\sigma$ satisfies these
  conditions, then
\begin{equation}
\|\sigma\|_{H^{\frac 12}(X)}\leq
  C[\|\eth\sigma\|_{L^2(X)}+\|\sigma\|_{L^2(X)}].
\end{equation}
The chiral restrictions $\eth^{\eo}$ are Fredholm and their $L^2$-adjoints
satisfy
\begin{equation}
[(\eth^{\eo},\cR^{\eo})]^*=\overline{(\eth^{\ooee},\cR^{\ooee})}.
\end{equation}
\end{theorem}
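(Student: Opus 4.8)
The plan is to reduce Theorem~\ref{thm4} to the microlocal parametrix statement of Theorem~\ref{thm3} together with the Calderon-projector formalism already deployed in~\cite{Epstein44}. First I would recall the basic comparison principle: for a first-order operator $\eth$ with Calderon projector $\cP$, and a boundary projector $\cR$, the operator $(\eth,\cR)$ is Fredholm (with the stated $H^{\frac12}$ estimate) precisely when the comparison operator $\cT=\cR\cP+(\Id-\cR)(\Id-\cP)$ is elliptic in the appropriate calculus, i.e.\ admits a two-sided parametrix modulo smoothing operators as in~\eqref{parmeqn}. This is exactly what Theorem~\ref{thm3} supplies, since $\cU=\text{diag}(\cU_1^-,\dots,\cU_L^-,\cU_{L+1}^+,\dots,\cU_N^+)$ is a parametrix for $\cT$. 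The point is that the several-boundary-component case introduces nothing new analytically once one knows the off-diagonal blocks of $\cP$ are smoothing: the argument in~\cite{Epstein44} is microlocal and local near each boundary component, so it applies \emph{verbatim} to each $Y_j$, and the global operator is assembled block-diagonally.

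Next I would spell out the deduction of the Fredholm property and elliptic estimate from the parametrix. Given $\sigma\in L^2(X;\Spn)$ with $\eth\sigma\in L^2$, the restriction $\sigma\restrictedto_{bX}\in H^{-\frac12}(bX)$ is well defined, and the condition $\cR[\sigma\restrictedto_{bX}]=0$ makes sense. One uses the standard reduction: solvability of $\eth\sigma=f$ with $\cR[\sigma\restrictedto_{bX}]=0$ is equivalent, via a Poisson operator for $\eth$ off a fixed right inverse on the double $\tX$, to invertibility of $\cT$ on boundary data; the parametrix $\cU$ then yields that $\eth$ with the given domain has finite-dimensional kernel and cokernel and closed range, and that the a priori estimate $\|\sigma\|_{H^{1/2}(X)}\leq C[\|\eth\sigma\|_{L^2}+\|\sigma\|_{L^2}]$ holds. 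The regularity gain from $H^{-\frac12}$ boundary data to $H^{\frac12}$ interior regularity is exactly the sub-elliptic gain encoded in the extended Heisenberg calculus order of $\cU$; this is the one place where ``sub-elliptic'' rather than ``elliptic'' matters, but it is handled identically to the connected-boundary case.

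For the adjoint identity $[(\eth^{\eo},\cR^{\eo})]^*=\overline{(\eth^{\ooee},\cR^{\ooee})}$, I would invoke Green's formula for $\eth$: for $\sigma,\tau$ smooth,
\begin{equation}
\langle\eth\sigma,\tau\rangle_X-\langle\sigma,\eth\tau\rangle_X=-\int_{bX}\langle\bc(dt)\,\sigma\restrictedto_{bX},\tau\restrictedto_{bX}\rangle\,dS,
\end{equation}
so the adjoint domain of $(\eth^{\eo},\cR^{\eo})$ consists of those $\tau$ for which the boundary pairing vanishes against all $\sigma$ with $\cR^{\eo}[\sigma\restrictedto_{bX}]=0$, i.e.\ those $\tau$ with $\bc(dt)\,\tau\restrictedto_{bX}$ orthogonal to $\ker\cR^{\eo}$, equivalently $(\Id-(\cR^{\eo})^*)\bc(dt)^{-1}$-type condition on $\tau\restrictedto_{bX}$. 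Using the formal self-adjointness of each $\cR_{j+}$ (hence of $\cR$) and the chiral relation~\eqref{eqn63}, $\cR^{\even}=\bc(dt)(\Id-\cR^{\odd})\bc(dt)^{-1}$, one checks that this adjoint boundary condition is precisely $\cR^{\ooee}[\tau\restrictedto_{bX}]=0$; the closure appears because the minimal and maximal domains differ, and the identity is first proved on the smooth (graph-closure) domain and then extended. The main obstacle, such as it is, is bookkeeping: one must verify that the pseudoconcave blocks, where the boundary projector is $\Id-\cR_{k+}$ rather than $\cR_{k+}$, interact correctly with the chiral flip $\bc(dt)$ and with the Calderon-projector relation $\cP^{\eo}_{\pm}=\bc(\pm dt)\cP^{\ooee}_{\mp}\bc(\pm dt)^{-1}$ of Proposition~\ref{prop11}; but since $\cT$ is symmetric under $\cR\leftrightarrow\Id-\cR$ and the parametrix construction uses only $\sigma_0(\cP)$ with $\sigma_0(\cP_-)=\sigma_0(\Id-\cP_+)$, each pseudoconcave component behaves exactly like a pseudoconvex one with the roles reversed, and no genuinely new estimate is required.
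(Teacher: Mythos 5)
Your proposal is correct and follows essentially the same route as the paper, whose proof of Theorem~\ref{thm4} consists precisely of invoking the parametrix of Theorem~\ref{thm3} and observing that the arguments of~\cite{Epstein44} (comparison operator $\cT$, Poisson-operator reduction, Green's formula with the chiral relation~\eqref{eqn63} for the adjoints) rest only on the existence of such a parametrix and the symmetry of $\cT$ under $\cR\leftrightarrow\Id-\cR$. Your write-up simply makes explicit the standard reductions the paper leaves to the reader, so no further changes are needed.
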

\begin{remark}
Indeed, there are also higher norm estimates: For each $s\geq 0,$ there is a
constant $C_s$ so that if $\sigma\in L^2,$ $\eth \sigma\in H^s,$ and
$\cR[\sigma\restrictedto_{bX}]=0,$ then $\sigma\in H^{s+\frac 12},$ and
\begin{equation}
\|\sigma\|_{H^{s+\frac 12}}\leq C_s[\|\eth\sigma\|_{H^s}+\|\sigma\|_{L^2}].
\end{equation}
These estimates imply that the null-space of $\eth$ is contained in
$\CI(X;\Spn).$
\end{remark}

As in our earlier papers, the indices of $(\eth^{\eo},\cR^{\eo})$ can be
computed as the relative indices on the boundary between $\cP^{\eo}$ and $\cR^{\eo}.$
Theorem~\ref{thm3} shows that $\cP^{\eo}$ and $\cR^{\eo}$ are a tame Fredholm pair, and
therefore the relative index can be computed as the index of:
\begin{equation}
\Rind(\cP^{\eo},\cR^{\eo})=\Ind[\cR^{\eo}:
\cP^{\eo}\CI(bX;\Spn\restrictedto_{bX})\longrightarrow
\cR^{\eo}\CI(bX;\Spn\restrictedto_{bX})].
\label{eqn68}
\end{equation}
\begin{theorem}\label{thm5} Let $X$ be a compact $\spnc$-manifold as in
  Theorem~\ref{thm4}. Suppose that $\cP$ is a Calderon projector for
  $\eth_{X},$ which satisfies
\begin{equation}
\cP^{\even *}=\bc(dt)(\Id-\cP^{\odd})\bc(dt)^{-1}.
\label{eqn75}
\end{equation}
If $\cR$ is a projector acting on sections of $\Spn\rst{bX}$ as
in~\eqref{eqn61}, then
\begin{equation}
\Ind(\eth^{\eo},\cR^{\eo})=\Rind(\cP^{\eo},\cR^{\eo}).
\end{equation}
\end{theorem}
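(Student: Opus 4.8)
The plan is to run the argument of~\cite{Epstein44} in the present several-component, mixed-convexity setting; the only genuinely new input it needs is Theorem~\ref{thm3}. That theorem produces a parametrix $\cU$ for $\cT=\cR\cP+(\Id-\cR)(\Id-\cP)$, so $\cT^{\eo}$ is elliptic in the extended Heisenberg calculus, $\cP^{\eo}$ and $\cR^{\eo}$ form a tame Fredholm pair, and, as in~\eqref{eqn68}, $\Rind(\cP^{\eo},\cR^{\eo})$ equals the index of $\cR^{\eo}\colon\cP^{\eo}\CI(bX;\Spn\restrictedto_{bX})\to\cR^{\eo}\CI(bX;\Spn\restrictedto_{bX})$. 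Elliptic regularity for $\cT^{\eo}$ puts the kernel $\range\cP^{\eo}\cap\Ker\cR^{\eo}$ of this operator inside $\CI$, and, $\cR^{\eo}$ being the formally self-adjoint projector coming from~\eqref{eqn61}, its cokernel is isomorphic to $\range\cR^{\eo}\cap(\range\cP^{\eo})^{\perp}$. So it suffices to identify the kernel and cokernel of $(\eth^{\eo},\cR^{\eo})$ with these two spaces.

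First I would identify the kernel by means of the Poisson operator of the Calderon projector. If $\sigma$ lies in the null space of $(\eth^{\eo},\cR^{\eo})$, then $\sigma\in\CI(X;\Spn^{\eo})$ by the regularity remark after Theorem~\ref{thm4}; since $\eth^{\eo}\sigma=0$, the Calderon projector acts as the identity on the Cauchy data of harmonic spinors in $X$, \ie $\cP^{\eo}[\sigma\restrictedto_{bX}]=\sigma\restrictedto_{bX}$, while $\cR^{\eo}[\sigma\restrictedto_{bX}]=0$ since $\sigma$ lies in the domain; so $\sigma\mapsto\sigma\restrictedto_{bX}$ carries $\Ker(\eth^{\eo},\cR^{\eo})$ into $\range\cP^{\eo}\cap\Ker\cR^{\eo}$. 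It is injective, since a harmonic spinor on $X$ with vanishing restriction to $bX$ extends by zero to a harmonic spinor on the compact manifold $\tX$, which must vanish as $\eth_{\tX}$ is invertible. It is surjective, since for $f\in\range\cP^{\eo}\cap\Ker\cR^{\eo}$ the spinor $\sigma=Q[\bc(dr)f\otimes\delta(r)]\restrictedto_{X}$ lies in $\CI(X;\Spn^{\eo})$, satisfies $\eth^{\eo}\sigma=0$ and $\sigma\restrictedto_{bX}=f$, and hence belongs to the domain. Thus $\Ker(\eth^{\eo},\cR^{\eo})\simeq\range\cP^{\eo}\cap\Ker\cR^{\eo}$.

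Next I would identify the cokernel. The adjoint identity of Theorem~\ref{thm4}, $[(\eth^{\eo},\cR^{\eo})]^{*}=\overline{(\eth^{\ooee},\cR^{\ooee})}$, gives $\coker(\eth^{\eo},\cR^{\eo})=\Ker\overline{(\eth^{\ooee},\cR^{\ooee})}$, which by the previous paragraph with the chiralities exchanged is isomorphic to $\range\cP^{\ooee}\cap\Ker\cR^{\ooee}$. To match this with $\range\cR^{\eo}\cap(\range\cP^{\eo})^{\perp}$ I would invoke the two symmetry relations, read for both chiralities. Since $\bc(dt)$ is unitary, the adjoint of~\eqref{eqn75} is the same relation with $\even$ and $\odd$ interchanged, so $(\range\cP^{\eo})^{\perp}=\Ker(\cP^{\eo})^{*}=\bc(dt)\,\range\cP^{\ooee}$; likewise~\eqref{eqn63} gives $\range\cR^{\eo}=\bc(dt)\,\Ker\cR^{\ooee}$. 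Therefore
\[
\range\cR^{\eo}\cap(\range\cP^{\eo})^{\perp}=\bc(dt)\bigl(\Ker\cR^{\ooee}\cap\range\cP^{\ooee}\bigr),
\]
so $\coker(\eth^{\eo},\cR^{\eo})$ and $\coker[\cR^{\eo}\colon\range\cP^{\eo}\to\range\cR^{\eo}]$ have the same dimension.

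Combining the two identifications,
\[
\Ind(\eth^{\eo},\cR^{\eo})=\dim(\range\cP^{\eo}\cap\Ker\cR^{\eo})-\dim(\range\cP^{\ooee}\cap\Ker\cR^{\ooee})=\Ind[\cR^{\eo}\colon\range\cP^{\eo}\to\range\cR^{\eo}]=\Rind(\cP^{\eo},\cR^{\eo}).
\]
Beyond Theorem~\ref{thm3} no new analytic estimate enters. The point needing care is simply to check that the Poisson-operator/Calderon-projector constructions and the symmetry relations~\eqref{pmreln},~\eqref{eqn63},~\eqref{eqn75} go through verbatim when $bX$ has several components of mixed convexity, so that the single-component proof of~\cite{Epstein44} applies with no change; the obstacle is thus the bookkeeping in the kernel/cokernel identification rather than a substantively new argument.
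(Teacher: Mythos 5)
Your proposal is correct and follows essentially the same route as the paper: identify $\Ker(\eth^{\eo},\cR^{\eo})$ with $\range\cP^{\eo}\cap\Ker\cR^{\eo}$ via boundary restriction and the Calderon/Poisson construction, then use the self-adjointness of $\cR$, the symmetry relations~\eqref{eqn63} and~\eqref{eqn75}, and the adjoint identity of Theorem~\ref{thm4} to match the cokernels; the only difference is cosmetic (you compute the cokernel of $(\eth^{\eo},\cR^{\eo})$ first and then match it to the boundary operator, whereas the paper works in the reverse order).
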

\begin{remark} If the Calderon projector is defined by embedding $X$ into
  $\tX,$ a closed compact $\spnc$-manifold, with invertible Dirac operator, then
  the relation~\eqref{eqn75} follows from  Proposition~\ref{prop11}
  and~\eqref{eqn54}.
\end{remark}

\begin{proof} We give the proof for the even case, the odd case is essentially
  identical. The null-space of $(\eth^{\even},\cR^{\even})$ consists of smooth
  sections $\sigma$ of $\Spn^{\even}$ satisfying:
\begin{equation}
\eth^{\even}\sigma=0\text{ and }\cR^{\even}[\sigma\restrictedto_{bX}]=0.
\end{equation}
It is clear that $\cP^{\even}[\sigma\rst_{bX}]=\sigma\rst_{bX},$ and
therefore $\sigma\rst_{bX}$ belongs to the null-space of $\cR^{\even}$ acting
on the range of $\cP^{\even}.$ On the other hand, if
$s\in\range\cP^{\even}$ and $\cR^{\even} s=0,$ then there is a unique
harmonic spinor $\sigma,$ with $\sigma\rst_{bX}=s.$ This shows that the
null-space of $(\eth^{\even},\cR^{\even})$ is isomorphic to the null-space of
the restriction in~\eqref{eqn68}.

The co-kernel of $\cR^{\even}\cP^{\even}$ is isomorphic to the null-space of
\begin{equation}
\cP^{\even *}:\range\cR^{\even}\longrightarrow\range\cP^{\even *}.
\end{equation}
Equation~\eqref{eqn63} implies that the range of $\cR^{\even}$ is $\bc(dt)$
applied to the null-space of $\cR^{\odd};$ this, along with~\eqref{eqn75},
shows that the co-kernel of $\cR^{\even}\cP^{\even}$ is isomorphic to the
intersection of the null-space of $\cR^{\odd}$ with the range of $\cP^{\odd}.$
By the first part of the argument, this intersection is isomorphic to
$\Ker(\eth^{\odd},\cR^{\odd}).$ Applying the last statement of
Theorem~\ref{thm4}, we complete the proof of the theorem.
\end{proof}

Using general properties of tame Fredholm pairs it follows that the relative
index can be computed as a difference of traces.
\begin{corollary}\label{cor3} Suppose that the parametrix $\cU$ for $\cT$
  satisfies~\eqref{eqn66}, then
\begin{equation}
\Ind(\eth^{\eo},\cR^{\eo})=\Rind(\cP^{\eo},\cR^{\eo})=
\Tr(\cP^{\eo}K_2^{\eo}\cP^{\eo})-\Tr(\cR^{\eo} K_1^{\eo}\cR^{\eo})
\label{eqn79}
\end{equation}
\end{corollary}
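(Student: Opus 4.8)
The plan is to identify the kernel and cokernel of the restricted map $\cR^{\eo}\cP^{\eo}:\cP^{\eo}\CI\to\cR^{\eo}\CI$ appearing in~\eqref{eqn68} with spaces naturally associated to the parametrix remainders $K_1^{\eo}, K_2^{\eo}.$ This is the standard mechanism for tame Fredholm pairs: when $\cP,\cR$ are projectors with $\cP-\cR$ in a well-behaved (smoothing) ideal, the composition $\cR\cP$ restricted to $\range\cP$ is Fredholm, and its index can be read off from the remainder operators in an approximate inverse. I would first record that $\cU$, being a parametrix for $\cT$ with remainders~\eqref{eqn66}, gives $\cT$ the Fredholm property on $\CI(bX;\Spn\rst_{bX})$, and that $\cT$ restricted to $\range\cP$ coincides (up to the complementary summand, on which $\cT=\Id-\cR$ paired with $\Id-\cP$) with $\cR\cP$; more precisely $\cT=\cR\cP+(\Id-\cR)(\Id-\cP)$ decomposes the problem into the ``$\range\cP\to\range\cR$'' block and its mirror, and these two blocks are adjoint to each other under the pairing supplied by~\eqref{eqn63} and~\eqref{eqn75}, so it suffices to analyze one of them.

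Next I would carry out the bookkeeping. Write $\cT\cU=\Id-K_1$, $\cU\cT=\Id-K_2$. Since $\cP$ is a projector, $\cP K_2\cP$ is a smoothing operator on $\range\cP$, and $\Ker(\cR\cP\rst_{\range\cP})$ is, up to finite-dimensional error controlled by $\cU$, the same as $\Ker(\cT\rst_{\range\cP})$; a short computation shows that on $\range\cP$ one has, modulo the image of $\cU$ restricted appropriately, that the kernel is identified with $\range(\cP K_2^{\eo}\cP)$ — equivalently with the range of the smoothing operator $\cP^{\eo}K_2^{\eo}\cP^{\eo}$ viewed as an endomorphism of $\range\cP^{\eo}$ — and so its dimension is $\Tr(\cP^{\eo}K_2^{\eo}\cP^{\eo})$, the trace making sense because the operator is smoothing and of finite rank in the relevant block. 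Symmetrically, the cokernel of $\cR^{\eo}\cP^{\eo}$ is identified with $\range(\cR^{\eo}K_1^{\eo}\cR^{\eo})$ inside $\range\cR^{\eo}$, of dimension $\Tr(\cR^{\eo}K_1^{\eo}\cR^{\eo})$. Subtracting gives~\eqref{eqn79}. The identification $\Ind(\eth^{\eo},\cR^{\eo})=\Rind(\cP^{\eo},\cR^{\eo})$ is already Theorem~\ref{thm5}, so only the trace formula for $\Rind$ is new here, and it is purely a statement about the tame Fredholm pair $(\cP^{\eo},\cR^{\eo})$.

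The main obstacle I anticipate is making precise the sense in which the traces are well-defined and computing them correctly: $K_1, K_2$ are smoothing but not a priori finite rank, so one must argue that $\cP^{\eo}K_2^{\eo}\cP^{\eo}$ and $\cR^{\eo}K_1^{\eo}\cR^{\eo}$, while not themselves projections onto kernel/cokernel, have traces equal to those dimensions. The clean way is to invoke the general lemma on tame Fredholm pairs (the ``difference of traces'' formula for the relative index of two projectors differing by a smoothing operator, cf.\ the cited literature on Bojarski's theorem and~\cite{Epstein44}), whose hypotheses are exactly~\eqref{eqn66} together with $\cP^2=\cP$, $\cR^2=\cR$. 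So in practice the proof is: verify the hypotheses of that lemma hold in the present multi-boundary-component setting (the content of Theorem~\ref{thm3} and the projector property of $\cR$ from~\eqref{eqn61}), cite the lemma, and read off~\eqref{eqn79}. I would keep the write-up short, emphasizing that all the analytic work has been done and this corollary is the formal consequence.
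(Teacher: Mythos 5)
Your final route is exactly the paper's: Theorem~\ref{thm3} shows $(\cP^{\eo},\cR^{\eo})$ form a tame Fredholm pair, and~\eqref{eqn79} is then quoted as an immediate consequence of the general trace formula for such pairs (Theorem 15 of~\cite{Epstein44}), with the first equality being Theorem~\ref{thm5}. The intermediate heuristic identifying $\Tr(\cP^{\eo}K_2^{\eo}\cP^{\eo})$ and $\Tr(\cR^{\eo}K_1^{\eo}\cR^{\eo})$ separately with kernel and cokernel dimensions is not literally correct (only their difference computes the index, since the remainders in~\eqref{eqn66} are not projections onto those spaces), but as you yourself discard that reading and fall back on citing the general lemma, the proposal is sound and matches the paper.
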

\begin{proof} Because $\cP^{\eo}$ and $\cR^{\eo}$ are tame Fredholm
  pairs, this is an immediate consequence of Theorem 15 in~\cite{Epstein44}.
\end{proof}

As in~\cite{Epstein44} the relative index formula has a useful corollary:
\begin{corollary}\label{cor4} Let $X$ be a compact $\spnc$-manifold with boundary as in
  Theorem~\ref{thm4} and $\cR$ a modified $\dbar$-Neumann boundary condition as
  in~\eqref{eqn61}. If $\cP$ is a Calderon projector for $\eth_{X},$ then 
\begin{equation}
\Rind(\cP^{\eo},\cR^{\eo})=-\Rind((\Id-\cP^{\eo}),(\Id-\cR^{\eo})).
\label{eqn80}
\end{equation}
\end{corollary}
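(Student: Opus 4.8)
The plan is to exploit the duality relations already established in Theorem~\ref{thm5}, together with the jump formula $\cP^{\eo}+\cP_-^{\eo}=\Id$ and the self-adjointness structure of the comparison operator $\cT$. The key observation is that $\Rind$ is antisymmetric in its arguments in a way that interacts nicely with the ``complementary projector'' operation. Concretely, for any tame Fredholm pair $(\cA,\cB)$ of projections one has $\Rind(\cA,\cB)=\Ind[\cB:\range\cA\to\range\cB]$, and passing to complements sends $\range\cA\to\ker\cA$, $\range\cB\to\ker\cB$, so that $\cB$ restricted to $\range\cA$ and $(\Id-\cB)$ restricted to $\range(\Id-\cA)$ are adjoint (up to the identifications coming from the ambient inner product). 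Thus the first step is to record the general functional-analytic identity $\Rind(\cA,\cB)=-\Rind(\Id-\cA,\Id-\cB)$ for tame Fredholm pairs of self-adjoint projections; this is essentially the statement that the index changes sign under taking adjoints of the restriction map, combined with the fact that $(\Id-\cB)|_{\range(\Id-\cA)}$ is the adjoint of $\cB|_{\range\cA}$.

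Next I would verify that the hypotheses needed to invoke this identity are met in our setting: by Theorem~\ref{thm3} (and Corollary~\ref{cor2} when the diagonal of $\cP$ is a projector), $\cP^{\eo}$ and $\cR^{\eo}$ form a tame Fredholm pair, so the parametrix $\cU$ for $\cT=\cR\cP+(\Id-\cR)(\Id-\cP)$ simultaneously serves as a parametrix for the comparison operator $\cT'=(\Id-\cR)(\Id-\cP)+\cR\cP$ built from the complementary projectors — but $\cT'=\cT$ literally, since the expression is symmetric under $(\cR,\cP)\mapsto(\Id-\cR,\Id-\cP)$. Hence $(\Id-\cP^{\eo})$ and $(\Id-\cR^{\eo})$ are also a tame Fredholm pair, with the \emph{same} parametrix and the \emph{same} smoothing remainders $K_1,K_2$ up to relabeling. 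This is the point where the self-adjointness relations~\eqref{eqn63} and~\eqref{eqn75} enter: they guarantee that both $\cR$ and (a suitable choice of) $\cP$ are genuinely self-adjoint projections on $L^2(bX;\Spn\restrictedto_{bX})$, so the abstract duality identity applies without any twisting by $\bc(dt)$.

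Then the proof is immediate: apply the general identity to $\cA=\cP^{\eo}$, $\cB=\cR^{\eo}$ to get
\begin{equation}
\Rind(\cP^{\eo},\cR^{\eo})=-\Rind(\Id-\cP^{\eo},\Id-\cR^{\eo}),
\end{equation}
which is exactly~\eqref{eqn80}. The main obstacle — and really the only substantive point — is making the first step rigorous in the tame (rather than honestly Fredholm) category: one must check that the restriction $\cB:\range\cA\to\range\cB$ and its formal adjoint $(\Id-\cB):\range(\Id-\cA)\to\range(\Id-\cA)$ have finite-dimensional, smooth kernels and cokernels and that $\Ind$ of one is minus $\Ind$ of the other, using that $\cA-\cB$ (equivalently $(\Id-\cA)-(\Id-\cB)$) is smoothing. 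This is the content of the tame-Fredholm-pair machinery from~\cite{Epstein44} (the same results invoked in Corollary~\ref{cor3}), so I would simply cite Theorem 15 of~\cite{Epstein44} for the index-difference-of-traces formula and observe that the right side of~\eqref{eqn79}, written for the complementary pair, is the negative of the right side written for the original pair — since $K_1$ and $K_2$ swap roles and $\cP\leftrightarrow\Id-\cP$, $\cR\leftrightarrow\Id-\cR$ merely permute the two trace terms and flip the overall sign. That computation closes the argument.
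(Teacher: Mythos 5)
Your argument hinges on two claims, and both break down. First, you assert that \eqref{eqn63} and \eqref{eqn75} ``guarantee that both $\cR$ and (a suitable choice of) $\cP$ are genuinely self-adjoint projections,'' so that the abstract identity $\Rind(\cA,\cB)=-\Rind(\Id-\cA,\Id-\cB)$ for orthogonal projections applies ``without any twisting by $\bc(dt)$.'' That is a misreading: \eqref{eqn75} says $\cP^{\even *}=\bc(dt)(\Id-\cP^{\odd})\bc(dt)^{-1}$, i.e.\ it relates the adjoint of the \emph{even} Calderon projector to the complement of the \emph{odd} one, conjugated by $\bc(dt)$; it does not say $\cP^{\eo}$ is self-adjoint, and in general a Calderon projector is not an orthogonal projection (the corollary is stated for an arbitrary Calderon projector). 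The only duality actually available exchanges chirality and twists by $\bc(dt)$, which is exactly why the paper cannot and does not argue formally: it first uses Proposition~\ref{prop2} and Corollary~\ref{cor3} to show the relative indices in \eqref{eqn80} are independent of the choice of Calderon projector, reduces to the projector coming from the invertible double, and then runs the argument of Corollary 5 of~\cite{Epstein44}, which exploits that for this choice $\Id-\cP$ is itself the Calderon projector of the complementary piece and $\Id-\cR$ the complementary modified boundary condition. If you wanted to use an orthogonalized Calderon projector instead, you would have to prove separately that the relative indices are unchanged under that replacement (the orthogonalization is not a Calderon projector of a fundamental solution, and it does not differ from $\cP$ by a smoothing operator), which your proposal does not address.

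Second, your fallback via the trace formula \eqref{eqn79} is computationally wrong. Since $\cT=\cR\cP+(\Id-\cR)(\Id-\cP)$ is literally unchanged under $(\cP,\cR)\mapsto(\Id-\cP,\Id-\cR)$, the parametrix and the remainders are the \emph{same}: $K_1$ and $K_2$ do not swap roles (they would swap, with adjoints, if you passed to $\cT^*$, which is the comparison operator for the \emph{reversed} pair, not the complementary one). Writing \eqref{eqn79} for both pairs and using $\Tr(\cA K\cA)=\Tr(K\cA)$ for an idempotent $\cA$, one finds
\begin{equation}
\Rind(\cP^{\eo},\cR^{\eo})+\Rind((\Id-\cP^{\eo}),(\Id-\cR^{\eo}))=\Tr K_2^{\eo}-\Tr K_1^{\eo}=\Ind(\cT^{\eo}),
\end{equation}
so your claimed sign flip is equivalent to the vanishing of $\Ind(\cT^{\eo})$ --- which is essentially the content of the corollary itself, not a consequence of permuting trace terms. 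Establishing that vanishing requires the specific structure (invertible double, the relations \eqref{eqn63}, \eqref{eqn75} with their $\bc(dt)$-conjugation and even/odd interchange, and the adjointness $[(\eth^{\eo},\cR^{\eo})]^*=\overline{(\eth^{\ooee},\cR^{\ooee})}$), which is precisely what the cited argument from~\cite{Epstein44} supplies and what your proposal omits.
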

\begin{proof}
It follows from Proposition~\ref{prop2} and Corollary~\ref{cor3} that the
relative indices in~\eqref{eqn80} do not depend on the choice of Calderon
projector, and therefore we can assume that $\cP$ is defined using the
invertible double construction. As it relies only on very general properties of
the Calderon projector, and the invertible double construction, the argument
used to prove Corollary 5 in~\cite{Epstein44} applies, with minor changes, to
establish~\eqref{eqn80}.
\end{proof} 

In the case that the diagonal of $\cP,$ $\cP_{d},$ is itself a projector,
Corollary~\ref{cor2} shows that, for each $t\in[0,1],$ $(\cP_t,\cR^{\eo}),$
where $\cP_{t}^{\eo}=\cP^{\eo}-t\cP^{\eo}_{od},$ is a tame Fredholm pair. The
index of these pairs can also be computed by evaluating a trace:
\begin{equation}
\Rind(\cP_{t}^{\eo},\cR^{\eo})=
\Tr(\cP^{\eo}_{t}K_{2t}^{\eo}\cP^{\eo}_{t})-\Tr(\cR^{\eo}
K_{1t}^{\eo}\cR^{\eo})
\label{eqn81}
\end{equation}
The operators on the right hand side of~\eqref{eqn81} are smoothing operators,
depending smoothly on $t,$ hence the traces depend smoothly on $t$ as well. As
the difference is an integer it must be constant. This proves the following
result.
\begin{theorem}\label{thm6} If $X,\cP,\cR$ satisfy the hypotheses of
  Theorem~\ref{thm5}, and the diagonal of the Calderon projector is itself a
  projector, then
\begin{equation}\Ind(\eth^{\eo},\cR^{\eo})=
  \Rind(\cP_d^{\eo},\cR^{\eo}).
\end{equation}
\end{theorem}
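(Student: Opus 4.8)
The plan is to deduce this from Theorem~\ref{thm5} together with a deformation argument that slides the off-diagonal part of the Calderon projector to zero. Theorem~\ref{thm5} already gives $\Ind(\eth^{\eo},\cR^{\eo})=\Rind(\cP^{\eo},\cR^{\eo})$, so it suffices to show that $\Rind(\cP^{\eo},\cR^{\eo})=\Rind(\cP_d^{\eo},\cR^{\eo})$, i.e. that the relative index is unchanged as we pass from $\cP$ to its diagonal $\cP_d$.

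First I would invoke Corollary~\ref{cor2}: since $\cP_d^2=\cP_d$, the interpolants $\cP_t=\cP-t\cP_{od}$ are projectors for every $t\in[0,1]$, and the block-diagonal parametrix $\cU$ satisfies $\cT_t\cU=\Id-K_{1t}$ and $\cU\cT_t=\Id-K_{2t}$ with $\{(K_{1t},K_{2t}):\ t\in[0,1]\}$ a smooth family of smoothing operators on $bX$. Passing to the chiral parts, this exhibits each $(\cP_t^{\eo},\cR^{\eo})$ as a tame Fredholm pair, with the parametrix data varying smoothly in $t$; note also that the identity~\eqref{eqn75} used in Theorem~\ref{thm5} is insensitive to the smoothing perturbation $\cP_{od}$, so the hypotheses of the relative-index machinery persist along the whole path.

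Next I would apply the trace formula of Corollary~\ref{cor3}, namely equation~\eqref{eqn81}, to each $t$, expressing $\Rind(\cP_t^{\eo},\cR^{\eo})$ as $\Tr(\cP_t^{\eo}K_{2t}^{\eo}\cP_t^{\eo})-\Tr(\cR^{\eo}K_{1t}^{\eo}\cR^{\eo})$. The right-hand side involves only smoothing operators depending smoothly on $t$, so $t\mapsto\Rind(\cP_t^{\eo},\cR^{\eo})$ is a continuous $\bbZ$-valued function on $[0,1]$ and is therefore constant. Evaluating at $t=0$ and $t=1$, where $\cP_0=\cP$ and $\cP_1=\cP_d$, gives $\Rind(\cP^{\eo},\cR^{\eo})=\Rind(\cP_d^{\eo},\cR^{\eo})$, and combining this with Theorem~\ref{thm5} finishes the proof.

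I do not anticipate a genuine obstacle here: the only substantive input is Corollary~\ref{cor2}, which already packages the smooth dependence of the parametrix data along the deformation, and the rest is the standard ``a continuous integer-valued function is constant'' argument. The single point requiring a line of care is that the trace formula~\eqref{eqn81} must be applied uniformly for all $t\in[0,1]$ rather than only at the endpoints, which is precisely why Corollary~\ref{cor2} was stated with a \emph{smooth family} of smoothing operators in the first place.
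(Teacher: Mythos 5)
Your proposal is correct and follows essentially the same route as the paper: Corollary~\ref{cor2} gives the family of projectors $\cP_t=\cP-t\cP_{od}$ with a smooth family of smoothing remainders, the trace formula~\eqref{eqn81} shows $t\mapsto\Rind(\cP_t^{\eo},\cR^{\eo})$ is a smoothly varying integer, hence constant, and Theorem~\ref{thm5} converts the endpoint value into $\Ind(\eth^{\eo},\cR^{\eo})$. The only superfluous touch is your remark about~\eqref{eqn75} persisting along the deformation, which is not needed since the trace formula rests solely on the tame Fredholm pair property supplied by the parametrix.
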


This result is our basic tool for studying the gluing properties of the indices
of sub-elliptic boundary value problems for $\eth.$

\section{Gluing formul{\ae} for the index of $\eth$}

We now consider the behavior of the index of $\eth$ with modified
$\dbar$-Neumann conditions under gluing operations.  This approach was
implicitly used in~\cite{Epstein4}, though we did not directly address the
analytic properties of boundary value problems on manifolds with several
ends. Under this rubric there is a huge multiplicity of possible situations
that one might consider, in
this section we focus on a $\spnc$-manifold, $X_{01},$ with two boundary
components, $Y_0, Y_1.$ As usual, we assume that the $\spnc$-structure, in a
neighborhood of $bX_{01},$ is induced by an almost structure, and that $Y_0,
Y_1$ are contact manifolds, with $Y_0$ strictly pseudoconcave and $Y_1$
strictly pseudoconvex. 

Let $\cS_0, \cS_1$ be generalized Szeg\H o projectors defined on $Y_0, Y_1$
respectively and $\cR_0,\cR_1,$ the pseudoconvex, modified $\dbar$-Neumann
boundary conditions they define. As it is the case of principal interest in
applications to complex analysis, we often assume that $\bY_0$ is also the
pseudoconvex boundary of a compact $\spnc$-manifold $X_0.$ We let $X_1\simeq
X_0\amalg_{Y_0} X_{01},$ denote the $\spnc$-manifold obtained by gluing $X_0$
to $X_{01}$ along $Y_0.$ The operators $(\eth_{X_0},\cR_0), (\eth_{X_1},\cR_1)$
are Fredholm, as is $(\eth_{X_{01}},[\Id-\cR_0,\cR_1]).$ Our basic result is a
gluing formula for $\Ind(\eth_{X_1},\cR_1).$
\begin{theorem}\label{thm7} Let $X_0, X_{01}$ and $X_1$ be as above, with
  $\cR_0, \cR_1$  modified pseudoconvex $\dbar$-Neumann conditions. The indices
  satisfy the following relation:
\begin{equation}
\Ind(\eth^{\eo}_{X_1},\cR^{\eo}_1)=
\Ind(\eth^{\eo}_{X_0},\cR_0)+\Ind(\eth^{\eo}_{X_{01}},[(\Id-\cR^{\eo}_0),\cR^{\eo}_1]).
\label{eqn82}
\end{equation}
\end{theorem}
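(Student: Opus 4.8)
The plan is to compute all three indices on the boundary as relative indices of tame Fredholm pairs, via Theorem~\ref{thm5} and Corollary~\ref{cor3}, and then match the trace expressions. The key observation is that the glued manifold $X_1$ and the piece $X_{01}$ can be set up so that the \emph{same} fundamental solution --- namely $\hQ$ on the invertible double $\hX_1$ --- furnishes Calderon projectors for $X_0$, $X_{01}$, and $X_1$ simultaneously, and that these Calderon projectors are compatible in the sense of Section~\ref{sec2}. Concretely, first I would invoke Proposition~\ref{prop2} (and its proof): choosing $\tX = \hX_1$, one gets a Calderon projector $\cP$ for $\eth_{X_{01}}$ whose block form with respect to $\Spn\rst_{Y_0}\oplus\Spn\rst_{Y_1}$ is
\begin{equation}
\cP=\left(\begin{matrix} \Id-\cP_0 & P_{10}\\
P_{01} & \cP_1\end{matrix}\right),
\end{equation}
where $\cP_0$ is the Calderon projector for $X_0$ (note $\bY_0$ bounds $X_0$, so $\ins{(X_1\setminus X_{01})} = \ins X_0$) and $\cP_1$ is the Calderon projector for $X_1$, both defined by $\hQ$. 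In particular the diagonal of $\cP$ consists of projectors, so Corollary~\ref{cor2} and Theorem~\ref{thm6} apply: $\Ind(\eth^{\eo}_{X_{01}},[(\Id-\cR^{\eo}_0),\cR^{\eo}_1]) = \Rind(\cP_d^{\eo},\cR^{\eo})$ where $\cP_d = \operatorname{diag}(\Id-\cP_0,\cP_1)$ and $\cR = \operatorname{diag}(\Id-\cR_0,\cR_1)$.

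Next, because $\cP_d$ is block diagonal and $\cR$ is block diagonal with matching blocks, the relative index splits as a sum:
\begin{equation}
\Rind(\cP_d^{\eo},\cR^{\eo}) = \Rind((\Id-\cP_0^{\eo}),(\Id-\cR_0^{\eo})) + \Rind(\cP_1^{\eo},\cR_1^{\eo}).
\end{equation}
The second summand is exactly $\Ind(\eth^{\eo}_{X_1},\cR^{\eo}_1)$ by Theorem~\ref{thm5}, since $\cP_1$ is a Calderon projector for $\eth_{X_1}$ (one checks the hypothesis~\eqref{eqn75} holds --- if necessary by first replacing $\cP_1$ by a double-construction Calderon projector and using the $\Rind$-independence established via Corollary~\ref{cor3}, or by noting $\hQ$ is the fundamental solution of an invertible operator on $\hX_1$ so Proposition~\ref{prop11} applies). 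For the first summand, Corollary~\ref{cor4} gives
\begin{equation}
\Rind((\Id-\cP_0^{\eo}),(\Id-\cR_0^{\eo})) = -\Rind(\cP_0^{\eo},\cR_0^{\eo}) \cdot(-1)= \Rind(\cP_0^{\eo},\cR_0^{\eo}),
\end{equation}
wait --- more carefully, \eqref{eqn80} reads $\Rind(\cP_0^{\eo},\cR_0^{\eo}) = -\Rind((\Id-\cP_0^{\eo}),(\Id-\cR_0^{\eo}))$. Here the roles of $\cP_0$ and the boundary condition are for $X_0$ with its \emph{reversed-orientation} boundary $\bY_0$, and the sign works out so that $\Rind((\Id-\cP_0^{\eo}),(\Id-\cR_0^{\eo}))$, computed with respect to $Y_0$ as a boundary component of $X_{01}$, equals $\Ind(\eth^{\eo}_{X_0},\cR_0)$; this is the orientation-reversal bookkeeping, and it is the same identity used to pass between a pseudoconcave boundary condition $\Id-\cR_{0+}$ on $X_{01}$ and a pseudoconvex condition $\cR_{0+}$ on $X_0$, exactly as in the discussion around~\eqref{eqn54} and Corollary~\ref{cor4}. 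Assembling, $\Ind(\eth^{\eo}_{X_{01}},[(\Id-\cR^{\eo}_0),\cR^{\eo}_1]) = \Ind(\eth^{\eo}_{X_0},\cR_0) + \Ind(\eth^{\eo}_{X_1},\cR^{\eo}_1)$ up to sign, and rearranging yields~\eqref{eqn82}.

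The main obstacle --- and the step deserving the most care --- is the orientation/sign bookkeeping in identifying $\Rind((\Id-\cP_0^{\eo}),(\Id-\cR_0^{\eo}))$ (a quantity attached to $Y_0 = bX_{01}$) with $\Ind(\eth^{\eo}_{X_0},\cR_0)$ (attached to $\bY_0 = bX_0$). This requires carefully tracking how $\bc(dt)$ conjugation (Proposition~\ref{prop11}, \eqref{eqn75}), the passage $\cP_0 \leftrightarrow \Id-\cP_0$, the passage $\cR_0 \leftrightarrow \Id - \cR_0$, and the reversal of orientation on the collar interact; Corollary~\ref{cor4} packages most of this, but one must confirm that the Calderon projector $\cP_0$ produced inside the block form of $\cP$ is genuinely the Calderon projector for $\eth_{X_0}$ in the orientation convention under which $(\eth_{X_0},\cR_0)$ is Fredholm, and that~\eqref{eqn75} transfers correctly. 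Everything else --- additivity of $\Rind$ over block-diagonal decompositions, the trace formulas, homotopy invariance in $t$ --- is routine given Theorems~\ref{thm3}--\ref{thm6}.
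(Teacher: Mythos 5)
Your overall route is the paper's own: the invertible double $\hX_1$ and its fundamental solution $\hQ$ furnishing compatible Calderon projectors for $X_0$, $X_{01}$, $X_1$; the block form of Proposition~\ref{prop2}; Theorem~\ref{thm6} (via Corollary~\ref{cor2}) to replace $\cP_{01}$ by its diagonal; block additivity of the relative index; and Corollary~\ref{cor4}. However, the one step you yourself flag as the main obstacle --- the sign in identifying $\Rind((\Id-\cP^{\eo}_0),(\Id-\cR^{\eo}_0))$ --- is resolved incorrectly, and with your sign the argument proves the wrong formula. No orientation-reversal correction is needed here: the $\cP_0$ appearing in the block form is a Calderon projector for $\eth_{X_0}$ obtained by embedding $X_0$ into the closed manifold $\hX_1$ with invertible Dirac operator, so by the remark following Theorem~\ref{thm5} (Proposition~\ref{prop11} together with \eqref{eqn54}) condition \eqref{eqn75} holds and Theorem~\ref{thm5} gives directly $\Ind(\eth^{\eo}_{X_0},\cR^{\eo}_0)=\Rind(\cP^{\eo}_0,\cR^{\eo}_0)$. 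Corollary~\ref{cor4} then gives $\Rind((\Id-\cP^{\eo}_0),(\Id-\cR^{\eo}_0))=-\Rind(\cP^{\eo}_0,\cR^{\eo}_0)=-\Ind(\eth^{\eo}_{X_0},\cR^{\eo}_0)$, and there is no second minus sign to cancel this one. Hence the block decomposition yields $\Ind(\eth^{\eo}_{X_{01}},[(\Id-\cR^{\eo}_0),\cR^{\eo}_1])=-\Ind(\eth^{\eo}_{X_0},\cR^{\eo}_0)+\Ind(\eth^{\eo}_{X_1},\cR^{\eo}_1)$, which rearranges to \eqref{eqn82}.

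Your claim that ``the sign works out'' so that $\Rind((\Id-\cP^{\eo}_0),(\Id-\cR^{\eo}_0))=+\Ind(\eth^{\eo}_{X_0},\cR^{\eo}_0)$ is false: were it true, your assembled identity would rearrange to $\Ind(\eth^{\eo}_{X_1},\cR^{\eo}_1)=\Ind(\eth^{\eo}_{X_{01}},[(\Id-\cR^{\eo}_0),\cR^{\eo}_1])-\Ind(\eth^{\eo}_{X_0},\cR^{\eo}_0)$, which contradicts \eqref{eqn82} whenever $\Ind(\eth^{\eo}_{X_0},\cR^{\eo}_0)\neq 0$. The hedge ``up to sign'' is precisely the gap: the sign is the content of the gluing formula, and it comes from the single application of Corollary~\ref{cor4} (Corollary 5 of~\cite{Epstein44}, which is exactly what the paper invokes at this point), not from any additional bookkeeping attached to viewing $\bY_0$ as the boundary of $X_0$ versus $Y_0$ as a boundary component of $X_{01}$. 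With that one correction your argument coincides with the paper's proof.
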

\begin{proof} To prove this formula, we  express the various indices in terms
  of relative indices on the boundaries. Let $\hX_1$ denote the invertible
  double of $X_1,$ and $\hQ_1$ the fundamental solution for $\eth_{\hX_1}.$ Let
  $\cP_0,\cP_1$ be Calderon projectors, for $\eth_{X_0},\eth_{X_1},$
  respectively, defined by $\hQ_1.$ Finally let $\cP_{01}$ be the Calderon
  projector for $\eth_{X_{01}}$ defined by $\hQ_1.$ The discussion in
  Section~\ref{sec2} shows that
\begin{equation}
\cP_{01}=\left(\begin{matrix} \Id-\cP_0 & P_{10}\\
P_{01} & \cP_1\end{matrix}\right)
\end{equation}
and therefore the diagonal of $\cP_{01}$ is itself a
projector. Theorem~\ref{thm5} shows that
\begin{multline}
\Ind(\eth^{\eo}_{X_0},\cR_0)+\Ind(\eth^{\eo}_{X_{01}},[(\Id-\cR^{\eo}_0),\cR^{\eo}_1])=\\
\Rind(\cP^{\eo},\cR_0)+\Rind(\cP^{\eo}_{01},[(\Id-\cR^{\eo}_0),\cR^{\eo}_1]).
\label{eqn83}
\end{multline}
Theorem~\ref{thm6} applies to show that the second term on the right hand side
of~\eqref{eqn83} can be replaced by
\begin{equation}
\begin{split}
\Rind(\cP^{\eo}_{01},[(\Id-\cR^{\eo}_0),\cR^{\eo}_1])=
&\Rind([(\Id-\cP^{\eo}_0),\cP^{\eo}_1],[(\Id-\cR^{\eo}_0),\cR^{\eo}_1])\\
=&\Rind((\Id-\cP^{\eo}_0),(\Id-\cR^{\eo}_0))+\Rind(\cP^{\eo}_1,\cR^{\eo}_1).
\end{split}
\label{eqn84}
\end{equation}
Finally we apply Corollary 5 from~\cite{Epstein44} to replace
$\Rind((\Id-\cP^{\eo}_0),(\Id-\cR^{\eo}_0))$ with
$-\Rind(\cP^{\eo}_0,\cR^{\eo}_0).$ Once again applying Theorem~\ref{thm5} we
obtain
\begin{equation}
\Ind(\eth^{\eo}_{X_0},\cR_0)+\Ind(\eth^{\eo}_{X_{01}},[(\Id-\cR^{\eo}_0),\cR^{\eo}_1])=
\Ind(\eth^{\eo}_{X_1},\cR^{\eo}_1).
\label{eqn85}
\end{equation}
as desired.
\end{proof}

As a special case we consider $X_{01}=Y_0\times [0,1].$ In this case the
formula can be rewritten as:
\begin{equation}
\Ind(\eth^{\eo}_{X_{01}},[(\Id-\cR^{\eo}_0),\cR^{\eo}_1])=
\Ind(\eth^{\eo}_{X_1},\cR^{\eo}_1)-\Ind(\eth^{\eo}_{X_0},\cR^{\eo}_0).
\label{eqn87}
\end{equation}
Since $X_1$ is homotopic, as a $\spnc$-manifold to $X_0,$ we can consider
$\cR_1$ as defining a boundary condition on $X_0.$ The index of
$(\eth^{\eo}_{X_1},\cR^{\eo}_1)$ does not change as we smoothly deform $X_1$ to
$X_0$, and we can therefore apply the Agranovich-Dynin formula, Theorem 8
from~\cite{Epstein44}, to prove:
\begin{corollary} If $Y$ is a strictly pseudoconvex, contact manifold, bounding
  a $\spnc$-manifold, and $\cS_0,\cS_1$ are generalized Szeg\H o projectors
  defined on $Y,$ then
\begin{equation}
\Rind(\cS_0,\cS_1)=\Ind(\eth^{\even}_{Y\times
  [0,1]},[(\Id-\cR^{\even}_0),\cR^{\even}_1]).
\label{eqn88}
\end{equation}
\end{corollary}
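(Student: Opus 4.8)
The plan is to specialize the gluing identity~\eqref{eqn87} and then collapse the cylinder. Take $X_{01}=Y\times[0,1]$ with a product $\spnc$-structure whose collar almost complex structures make $Y_0=Y\times\{0\}$ strictly pseudoconcave and $Y_1=Y\times\{1\}$ strictly pseudoconvex (the symplectization model over the contact manifold $Y$), and let $X_0$ be the given compact $\spnc$-manifold having $\bY_0$ as its strictly pseudoconvex boundary. Put the generalized Szeg\H o projector $\cS_0$ on $Y_0$ and $\cS_1$ on $Y_1$, giving the modified pseudoconvex $\dbar$-Neumann conditions $\cR_0,\cR_1$; since the contact structure on $Y$ is fixed, $\cS_0,\cS_1\in\Psi^0_H(Y)$ and $\Rind(\cS_0,\cS_1)$ is defined. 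Form $X_1=X_0\amalg_{Y_0}X_{01}$ as in Section~1. Then~\eqref{eqn87}, in the even case, reads
\[
\Ind(\eth^{\even}_{Y\times[0,1]},[(\Id-\cR^{\even}_0),\cR^{\even}_1])
=\Ind(\eth^{\even}_{X_1},\cR^{\even}_1)-\Ind(\eth^{\even}_{X_0},\cR^{\even}_0),
\]
so the task is to identify the right-hand side with $\Rind(\cS_0,\cS_1)$.

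Next I would observe that attaching the collar $Y\times[0,1]$ to $X_0$ along $Y_0$ returns a manifold $\spnc$-diffeomorphic to $X_0$: the underlying smooth manifold is unchanged, and by Lemma~8 of~\cite{Epstein44} the almost complex structure extends over the added cylinder, agreeing with the one near $bX_0$. Under the resulting isomorphism $X_1\cong X_0$, with $bX_1=Y$ carrying $\cS_1$, the projector $\cR_1$ becomes the modified $\dbar$-Neumann condition on $bX_0$ built from $\cS_1$. Because this identification is realized by a smooth path of such $\spnc$-manifolds with almost-complex collars, along which Theorem~\ref{thm4} supplies a continuous family of Fredholm boundary value problems, the index is constant, and hence $\Ind(\eth^{\even}_{X_1},\cR^{\even}_1)=\Ind(\eth^{\even}_{X_0},\cR^{\even}_1)$, where on the right $\cR_1$ now denotes the modified $\dbar$-Neumann condition on $Y=bX_0$ defined by $\cS_1$.

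The two terms on the right are then indices of the single operator $\eth_{X_0}$ under modified $\dbar$-Neumann conditions that differ only through the generalized Szeg\H o projector used in degrees $0$ and $1$, namely $\cS_1$ against $\cS_0$. The Agranovich--Dynin formula, Theorem~8 of~\cite{Epstein44}, evaluates exactly this difference as the relative index of the two Szeg\H o projectors, giving
\[
\Ind(\eth^{\even}_{X_0},\cR^{\even}_1)-\Ind(\eth^{\even}_{X_0},\cR^{\even}_0)=\Rind(\cS_0,\cS_1);
\]
substituting into the display of the first paragraph yields~\eqref{eqn88}.

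I expect the one point needing genuine care, rather than a real obstacle, to be sign bookkeeping: one must check that the orientation convention defining $\Rind(\cS_0,\cS_1)$ as the index of $\cS_1\colon\range\cS_0\to\range\cS_1$ matches the direction of the Agranovich--Dynin difference as stated in~\cite{Epstein44}, and that the product $\spnc$-structure chosen on $Y\times[0,1]$ genuinely satisfies the hypotheses of Theorem~\ref{thm7} (pseudoconcave at $Y_0$, pseudoconvex at $Y_1$, gluable to $X_0$ along $Y_0$ without altering the index). Both are settled by tracking definitions, and no analytic input beyond Theorems~\ref{thm4} and~\ref{thm7} and the cited Agranovich--Dynin formula is required.
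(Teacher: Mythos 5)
Your proposal is correct and follows the paper's own route: specialize the gluing formula of Theorem~\ref{thm7} to the cylinder $Y\times[0,1]$ (equation~\eqref{eqn87}), use the fact that $X_1$ is $\spnc$-homotopic to $X_0$ so that $\cR_1$ may be regarded as a boundary condition on $X_0$ with unchanged index, and then invoke the Agranovich--Dynin formula (Theorem~8 of~\cite{Epstein44}) to identify the resulting difference of indices with $\Rind(\cS_0,\cS_1)$. The only difference is that you spell out the deformation-invariance and sign conventions, which the paper leaves implicit.
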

\begin{remark} This result is strongly suggested by the analysis
  in~\cite{Epstein44}, but does not follow directly from it. It is unclear whether
  the result remains true if $Y$ is not the boundary of $\spnc$-manifold.
\end{remark}

Applying Theorem~\ref{thm7} twice we easily obtain a cocycle formula for these
indices.
\begin{corollary}\label{cor5} Suppose that $X_{01}, X_{12}$ are $\spnc$-manifolds with
  boundaries $Y_0\amalg Y_1,$ $Y_1\amalg Y_2,$ respectively. Assume that
  $\bY_0$ is also the pseudoconvex boundary of a compact $\spnc$-manifold. Let
  $\cS_0,\cS_1,\cS_2,$ denote generalized Szeg\H o projectors defined on $Y_0,
  Y_1, Y_2,$ and $\cR_0,\cR_1,\cR_2$ the modified pseudoconvex $\dbar$-Neumann
  boundary conditions they define. The following cocycle relation holds:
  \begin{multline}
\Ind(\eth^{\eo}_{X_{02}},[(\Id-\cR^{\eo}_0),\cR^{\eo}_2])=\\
\Ind(\eth^{\eo}_{X_{01}},[(\Id-\cR^{\eo}_0),\cR^{\eo}_1])+
\Ind(\eth^{\eo}_{X_{12}},[(\Id-\cR^{\eo}_1),\cR^{\eo}_2]).
\end{multline}
\end{corollary}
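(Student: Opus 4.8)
The plan is to obtain the cocycle relation by applying Theorem~\ref{thm7} twice and then cancelling a common term. First I would introduce the glued manifold $X_{02}\simeq X_{01}\amalg_{Y_1}X_{12}$, obtained by the augmentation-and-gluing process described in Section~1, so that $X_{02}$ has boundary $Y_0\amalg Y_2$ with $Y_0$ strictly pseudoconcave and $Y_2$ strictly pseudoconvex (note that $Y_1$ appears with opposite orientations as a boundary component of $X_{01}$ and of $X_{12}$, so the pseudoconvex/pseudoconcave labels are consistent for the gluing). Since $\bY_0$ is by hypothesis the pseudoconvex boundary of a compact $\spnc$-manifold $X_0$, the manifold $X_{12}$ plays the role of ``$X_{01}$'' in Theorem~\ref{thm7} with its concave boundary $Y_1$, and $X_{01}$ itself plays the role of ``$X_0$''—but here I need to be careful, since in Theorem~\ref{thm7} the piece $X_0$ is a manifold with a \emph{single} boundary component. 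The correct way to organize this is to note that $X_{01}$ is a $\spnc$-manifold whose boundary $\bY_1$ (the $Y_1$-component with reversed orientation) is the pseudoconvex boundary needed to glue $X_{12}$ onto it along $Y_1$; the remaining boundary component $Y_0$ is simply carried along in all the constructions, exactly as the extra boundary components were carried along in Proposition~\ref{prop2} and the theorem of Section~\ref{sec2}.

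So, more precisely, I would argue as follows. Apply Theorem~\ref{thm7}, in the form with an extra inert boundary component $Y_0$, to the gluing $X_{02}\simeq X_{01}\amalg_{Y_1}X_{12}$, where the roles are: $X_{01}$ is the base piece (with its pseudoconvex boundary $\bY_1$, plus the inert concave component $Y_0$), $X_{12}$ is the collar-type piece glued on along $Y_1$, and $X_{02}$ is the result. The hypothesis that $\bY_0$ bounds a compact $\spnc$-manifold guarantees (via Proposition~\ref{prop2} and the Section-\ref{sec2} theorem) that there is a Calderon projector for $\eth_{X_{01}}$, and for $\eth_{X_{02}}$, whose diagonal with respect to the decomposition of $\Spn\rst_{bX}$ along the boundary components is itself a projector, so that Theorems~\ref{thm5} and~\ref{thm6} apply with $Y_0$ present as a spectator index. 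This yields
\begin{equation*}
\Ind(\eth^{\eo}_{X_{02}},[(\Id-\cR^{\eo}_0),\cR^{\eo}_2])=
\Ind(\eth^{\eo}_{X_{01}},[(\Id-\cR^{\eo}_0),\cR^{\eo}_1])+
\Ind(\eth^{\eo}_{X_{12}},[(\Id-\cR^{\eo}_1),\cR^{\eo}_2]),
\end{equation*}
which is exactly the asserted formula; so in fact a single application of the generalized Theorem~\ref{thm7} suffices. The ``apply twice'' phrasing corresponds to the two gluings implicit in passing through the intermediate space: conceptually one first glues $X_0$ onto $X_{01}$ along $Y_0$ and separately onto $X_{12}$ along $Y_0$—no, rather one writes both $\Ind(\eth_{X_{01}},\dots)$ and $\Ind(\eth_{X_{02}},\dots)$ via Theorem~\ref{thm7} relative to the fixed filling $X_0$ of $\bY_0$, getting $\Ind(\eth_{X_1},\cR_1)=\Ind(\eth_{X_0},\cR_0)+\Ind(\eth_{X_{01}},[(\Id-\cR_0),\cR_1])$ and $\Ind(\eth_{X_2},\cR_2)=\Ind(\eth_{X_0},\cR_0)+\Ind(\eth_{X_{02}},[(\Id-\cR_0),\cR_2])$ where $X_1=X_0\amalg_{Y_0}X_{01}$, $X_2=X_0\amalg_{Y_0}X_{02}$; subtracting kills $\Ind(\eth_{X_0},\cR_0)$ and, combined with the analogous formula for $X_2\simeq X_1\amalg_{Y_1}X_{12}$, produces the cocycle identity after one more cancellation.

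The main obstacle is the bookkeeping around inert boundary components: Theorem~\ref{thm7} as literally stated involves pieces with one or two boundary components, whereas here $X_{01}$ and $X_{02}$ each carry an additional boundary component ($Y_0$) that is not involved in the gluing. I would handle this by invoking the block-diagonalization results of Section~\ref{sec2} (the theorem on $\cP^0_0$) and Corollary~\ref{cor2}, which show that the Calderon projector splits, up to homotopy through projectors, into a block for the gluing interface and a block for each spectator component; the relative index is then additive over these blocks (by the trace formula of Corollary~\ref{cor3}, since the traces split over the block-diagonal decomposition), so the $Y_0$-contribution $\Rind((\Id-\cP^{\eo}_0),(\Id-\cR^{\eo}_0))$ appears as the same summand in every equation and cancels. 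Once this additivity over spectator components is recorded, the proof is a two-line algebraic manipulation of the instances of Theorem~\ref{thm7}, exactly parallel to how a Mayer--Vietoris-type cocycle relation is extracted from a gluing formula.
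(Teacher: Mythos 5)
Your proposal is correct, and the argument you settle on at the end of your second paragraph --- writing $\Ind(\eth_{X_1},\cR_1)$, $\Ind(\eth_{X_2},\cR_2)$ via Theorem~\ref{thm7} relative to the fixed filling $X_0$ of $\bY_0$ (with $X_1=X_0\amalg_{Y_0}X_{01}$, $X_2=X_0\amalg_{Y_0}X_{02}\simeq X_1\amalg_{Y_1}X_{12}$) and cancelling the common terms --- is exactly the paper's intended proof, which consists of nothing more than repeated application of Theorem~\ref{thm7}. The detour through a ``generalized Theorem~\ref{thm7} with an inert boundary component'' and the block-diagonalization bookkeeping is unnecessary for this corollary, since Theorem~\ref{thm7} as stated already suffices once you glue in $X_0$ as you do.
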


\begin{remark} As suggested to the author by Laszlo Lempert, one might try to extend
  the notion of the relative index between pairs of generalized Szeg\H o
  projectors defined on one contact manifold, to a relative index between pairs
  of generalized Szeg\H o projectors defined on pairs of ``almost complex
  $\spnc$-cobordant'' contact manifolds, $(Y_0,H_0), (Y_1, H_1).$ By this we
  mean that there is a $\spnc$-manifold with boundary $X_{01}$ such that
  $bX_{01}=Y_1\amalg \bY_0,$ and the $\spnc$-structure on $X_{01}$ is defined
  in a neighborhood of $bX_{01}$ by an almost complex structure. The almost
  complex structure induces the given contact structures on the boundary
  components, and the boundary components are strictly pseudoconvex, resp.
  pseudoconcave.

Let $\cS_0, \cS_1$ be generalized Szeg\H o projectors
  defined on $(Y_0,H_0),$ $(Y_1,H_1),$
  respectively. Generalizing~\eqref{eqn88}, one might attempt to define
\begin{equation}
``\Rind(\cS_0,\cS_1)\text{{}''}=\Ind(\eth^{\even}_{X_{01}},[(\Id-\cR^{\even}_0),\cR^{\even}_1]).
\label{eqn90}
\end{equation}
Corollary~\ref{cor5} shows that this invariant satisfies the cocycle formula.
The problem with this definition is that it seems unlikely that two different
choices of almost complex $\spnc$-cobordism will give the same value for
$\Rind(\cS_0,\cS_1).$ If $X_{01}'$ is another such cobordism, then this amounts
to knowing whether or not $\Ind(\eth^{\even}_{X_{01}\amalg \bX_{01}'})$
vanishes.

While this definition does not appear to be adequate, it seems
likely that one could modify the definition in~\eqref{eqn90} by subtracting a
topological or geometric invariant of the cobordism, $\cT(X_{01}),$ with the
properties that
\begin{equation}
\begin{split}
\cT(X_{01})+\cT(\bX_{01}')&=\Ind(\eth^{\even}_{X_{01}\amalg \bX_{01}'})\\
\cT(Y\times [0,1])&=0.
\end{split}
\end{equation}
The modified invariant would then agree with $\Rind(\cS_0,\cS_1)$ in the
product case, and would depend only on the pair
$(Y_0,H_0,\cS_0),(Y_1,H_1,\cS_1).$  
\end{remark}

In our earliest work on relative indices between classical Szeg\H o projectors
we had a variety of conditions assuring that $\Rind(\cS_0,\cS_1)$ vanishes,
see~\cite{Epstein}. Following the philosophy of the remark, we have a
considerable generalization of our earlier results.
\begin{theorem}\label{thm8} Let $X$ be a strictly pseudoconvex, complex manifold
  with boundary, on which there is defined an exhaustion function, $\varphi.$
  For each $c\in\bbR$ let
\begin{equation}
X_{c}=\varphi^{-1}((-\infty,c])\text{ and }X^c=X\setminus X_c.
\end{equation}
Suppose that for some $c_0,$ $\varphi$ is strictly plurisubharmonic in
$X^{c_0}.$ For  $c>c_0,$ a regular value of $\varphi,$ let $\cS_0$ be the
classical Szeg\H o projector defined on $bX_c,$ $\cS_1,$ the classical Szeg\H o
projector defined on $bX,$ and $\cR_0,\cR_1,$ the modified pseudoconvex
$\dbar$-Neumann boundary conditions they define. Under these assumptions
\begin{equation}
\Ind(\eth^{\even}_{X^c},[(\Id-\cR^{\even}_0),\cR^{\even}_1])=0.
\label{eqn92}
\end{equation}
\end{theorem}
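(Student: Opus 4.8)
The plan is to show that the region $X^c$ is a product-like $\spnc$-cobordism on which the relevant index vanishes by an explicit deformation. First I would observe that since $\varphi$ is strictly plurisubharmonic on $X^{c_0}\supset X^c$, the level sets $bX_c$ and $bX$ are both strictly pseudoconvex hypersurfaces in a complex manifold, and the gradient flow of $\varphi$ (with respect to a suitable metric) gives a $\spnc$-diffeomorphism of $X^c$ onto the cylinder $bX_c\times[0,1]$ carrying the complex structure near $bX_c$ to a fixed one up to homotopy. Crucially, the two classical Szeg\H o projectors $\cS_0$ and $\cS_1$ are \emph{both} the integrable Szeg\H o projectors attached to the \emph{same} complex structure on $X$, just on different slices of the same Stein filling; so under the identification with $bX_c\times[0,1]$ they correspond to two generalized Szeg\H o projectors that are the boundary-value data of a single holomorphic function theory on $X^c$.

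The main step is then to run the argument behind~\eqref{eqn88}: by the Corollary following~\eqref{eqn87}, $\Rind(\cS_0,\cS_1)=\Ind(\eth^{\even}_{bX_c\times[0,1]},[(\Id-\cR^{\even}_0),\cR^{\even}_1])$, and this equals $\Ind(\eth^{\even}_{X^c},[(\Id-\cR^{\even}_0),\cR^{\even}_1])$ once we identify $X^c$ with the cylinder as $\spnc$-manifolds (the index is homotopy-invariant under the deformation of the cylinder to $X^c$, using Theorem~\ref{thm4} and Corollary~\ref{cor3}). So it suffices to show $\Rind(\cS_0,\cS_1)=0$. For this I would exhibit directly that $\cS_0:\range\cS_1\to\range\cS_0$ is an isomorphism: the restriction map sending a holomorphic function on $X$ to its boundary values on $bX_c$, and the map sending a CR function on $bX_c$ to the boundary values on $bX$ of its holomorphic extension into $X^c$ (which exists and is unique because $X^c$ is a Stein-type collar — here one uses strict plurisubharmonicity of $\varphi$ on $X^{c_0}$ to solve $\dbar$ and get the extension, or simply the maximum principle together with density of restrictions of global holomorphic functions), are mutually inverse Fredholm maps of index zero. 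Hence $\Rind(\cS_0,\cS_1)=0$, and combining with the index identification gives~\eqref{eqn92}.

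The hard part will be the passage from the abstract cylinder to the actual region $X^c$ — i.e. justifying that $\Ind(\eth^{\even}_{X^c},[(\Id-\cR^{\even}_0),\cR^{\even}_1])$ is unchanged when $X^c$ is deformed to $Y\times[0,1]$ through $\spnc$-manifolds with the two contact boundary components fixed. This needs that the sub-elliptic boundary value problem is stable under such deformations; it follows from the trace formula~\eqref{eqn79} in Corollary~\ref{cor3}, since the operators $K_1,K_2$ vary smoothly with the geometry and the difference of traces is integer-valued, hence locally constant. The second potential obstacle is verifying the holomorphic extension statement across the collar $X^c$; but since $\varphi$ is strictly plurisubharmonic there, $X^c$ retracts holomorphically and CR functions on $bX_c$ extend by the standard Stein theory, so this is routine. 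One should also confirm at the outset that $bX_c$ is a smooth strictly pseudoconvex hypersurface, which is exactly why $c$ is taken to be a regular value of $\varphi$ with $c>c_0$.
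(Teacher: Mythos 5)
Your proposal has a genuine gap, and in fact the overall strategy cannot work in the stated generality. The first step --- using the gradient flow of $\varphi$ to identify $X^c$ with a cylinder $bX_c\times[0,1]$ as a $\spnc$-manifold --- is unjustified: strict plurisubharmonicity does not exclude critical points of $\varphi$ in $X^{c_0}$ (think of a strictly plurisubharmonic Morse exhaustion of a Stein manifold with nontrivial topology, whose critical points of index $\leq n$ sit at arbitrary levels), so $X^c$ need not be a product, and $bX_c$ and $bX$ need not be contactomorphic or even diffeomorphic. Consequently the reduction to the Corollary~\eqref{eqn88} is unavailable; worse, when the two boundaries are different contact manifolds the quantity $\Rind(\cS_0,\cS_1)$ you want to show vanishes is not even defined --- this is exactly the difficulty discussed in the Remark following~\eqref{eqn90}, where it is pointed out that an index across a cobordism genuinely depends on the cobordism. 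Your fallback homotopy-invariance argument via Corollary~\ref{cor3} presupposes the existence of a deformation of $X^c$ to $Y\times[0,1]$ through $\spnc$-manifolds with the required convexity, which again fails when $X^c$ is not a product. Even granting a product collar, your closing argument that $\Rind(\cS_0,\cS_1)=0$ is incorrect: the ``inverse'' map you propose requires extending CR functions on $bX_c$ \emph{outward} across $X^c$, i.e.\ to the pseudoconcave side of $bX_c$, which is not given by Stein theory (Kohn--Rossi extension goes into $X_c$, the pseudoconvex side), and indeed the restriction $H^0(X)\to H^0(X_c)$ of a Runge pair has dense but in general proper image, so no such pair of mutually inverse maps exists. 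Degree $0$ is precisely the degree in which the Runge property fails to give an isomorphism.

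The paper's proof avoids all of this by never working in degree $0$ and never needing a product structure. By the gluing formula~\eqref{eqn82} of Theorem~\ref{thm7} (with $X_0=X_c$, $X_{01}=X^c$, $X_1=X$), the vanishing~\eqref{eqn92} is equivalent to $\Ind(\eth^{\even}_{X},\cR^{\even}_1)=\Ind(\eth^{\even}_{X_c},\cR^{\even}_0)$. Since the structure is integrable, equation 77 of~\cite{Epstein4} identifies both indices with renormalized holomorphic Euler characteristics $\sum_{q\geq 1}(-1)^q\dim H^{0,q}$, as in~\eqref{eqn94}, and the strict plurisubharmonicity of $\varphi$ on $X^{c}$ makes $(X,X_c)$ a Runge pair, so Andreotti--Grauert--H\"ormander gives $H^{0,q}(X)\simeq H^{0,q}(X_c)$ for $1\leq q\leq n$ as in~\eqref{eqn95}. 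Note that this cohomological comparison is valid regardless of critical points of $\varphi$ in $X^c$, and the problematic $q=0$ groups never enter. If you want to salvage your approach, you would have to restrict to the case where $\varphi$ has no critical values in $(c,\max\varphi]$ and replace the degree-$0$ isomorphism claim by an index computation; at that point you are essentially forced back to the Euler-characteristic argument above.
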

\begin{proof} The gluing formula~\eqref{eqn82} implies that~\eqref{eqn92} is
  equivalent to the statement that
\begin{equation}
\Ind(\eth_{X}^{\even},\cR^{\even}_1)=
\Ind(\eth_{X^c}^{\even},\cR^{\even}_0).
\label{eqn93}
\end{equation}
Since we are working in the integrable case we can apply equation 77
of~\cite{Epstein4} to conclude that
\begin{equation}
\begin{split}
&\Ind(\eth_{X}^{\even},\cR^{\even}_1)=\sum_{q=1}^n(-1)^q\dim H^{0,q}(X)\\
&\Ind(\eth_{X^c}^{\even},\cR^{\even}_0)=\sum_{q=1}^n(-1)^q\dim H^{0,q}(X_c),
\end{split}
\label{eqn94}
\end{equation}
where $n=\dim_{\bbC} X.$ As there is a strictly plurisubharmonic exhaustion
defined in $X^c,$ $(X,X_c)$ is a Runge pair. Hence, we can apply the classical
results of Andreotti, Grauert and H\"ormander to conclude that
\begin{equation}
H^{0,q}(X)\simeq H^{0,q}(X_c)\text{ for }1\leq q\leq n.
\label{eqn95}
\end{equation}
See~\cite{Hormander8}. The theorem follows immediately from~\eqref{eqn94}
and~\eqref{eqn95}. 
\end{proof}

\section{Sub-elliptic boundary conditions along a separating hypersurface}
Suppose that $X$ is a compact Spin- or $\spnc$-manifold and $Y\hookrightarrow X$ is a
separating hypersurface; let $X\setminus Y= X_0\amalg X_1.$  Let $\cP_0,\cP_1$ be
Calderon projectors defined on $X_0,X_1$ respectively. Bojarski's theorem
expresses the $\Ind(\eth_{X})$ as the relative index:
\begin{equation}
\Ind(\eth^{\even}_{X})=\Rind(\cP^{\even}_{1},(\Id-\cP^{\even}_0)).
\end{equation}
If $P$ is a classical pseudodifferential projector acting on $\Spn\rst{Y},$ so
that $P\cP_0+(\Id-P)(\Id-\cP_0)$ is classically elliptic, then
$(\eth^{\even}_{X_0},P^{\even})$ and $(\eth^{\even}_{X_1},(\Id-P^{\even}))$ are
Fredholm operators. Expressing the indices of these operators as relative
indices, and using the cocycle relation for relative indices, Bojarski's
theorem easily implies that
\begin{equation}
\Ind(\eth^{\even}_{X})=\Ind(\eth^{\even}_{X_0},P^{\even})+
\Ind(\eth^{\even}_{X_1},(\Id-P^{\even})).
\label{bjrthm1}
\end{equation}
In~\cite{Epstein4} we generalized this identity to the sub-elliptic case, but
only under the assumption that the $\spnc$-structure on $X$ is defined by an
integrable, almost complex structure. In this section we use the relative index
formalism developed here and in~\cite{Epstein44} to extend this formula to the general case.
\begin{theorem}\label{thm9} Let $X$ be a $\spnc$-manifold and $Y\hookrightarrow X,$ a
  separating hypersurface; let $X\setminus Y= X_0\amalg X_1.$ Suppose that the
  $\spnc$-structure is defined in a neighborhood of $Y$ by an almost complex
  structure, inducing a contact structure on $Y=Y_1\amalg\cdots\amalg Y_N$ with
  definite Levi-form. We suppose that the components $Y_1,\dots,Y_L$ are
  strictly pseudoconcave, and $Y_{L+1},\dots,Y_N$ are strictly pseudoconvex,
  with respect to $X_0.$ For each boundary component we choose a generalized
  Szeg\H o projector, $\{\cS_i:\: i=1,\dots,N\},$ and let $\cR_0$ be the modified
  $\dbar$-Neumann boundary condition they define as in~\eqref{eqn61}, then
\begin{equation}
\Ind(\eth^{\even}_{X})=\Ind(\eth^{\even}_{X_0},\cR^{\even}_0)+
\Ind(\eth^{\even}_{X_1},(\Id-\cR^{\even}_0)).
\label{bjrthm2}
\end{equation}
\end{theorem}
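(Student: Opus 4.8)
The plan is to reduce \eqref{bjrthm2} to Bojarski's theorem for the closed manifold $X$ together with the relative index machinery assembled in the previous sections, exactly paralleling the integrable argument of~\cite{Epstein4} but now phrased entirely in terms of Calderon projectors and the comparison operator $\cT.$ First I would fix the ambient fundamental solution: since $X$ is closed we would like to use an invertible Dirac operator on $X$ itself, but $\eth_X$ need not be invertible, so instead I would pass to the invertible double of $X$ — or more simply note that by Proposition~2 the relative indices below are independent of the choice of Calderon projector, so I may construct $\cP_0$ (the Calderon projector for $\eth_{X_0}$) and $\cP_1$ (for $\eth_{X_1}$) from a \emph{single} fundamental solution $\hQ$ defined on an invertible enlargement of $X.$ With that choice, the jump formula across $Y$ gives the key compatibility $\cP_{0}+\cP_{1}=\Id$ on $\Spn\rst{Y}$ (this is \eqref{eqn31}/\eqref{jmpform} applied to the separating hypersurface $Y$, with $X_0$ and $X_1$ playing the roles of the two sides).

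Next I would express each of the three indices in \eqref{bjrthm2} as a relative index on the relevant boundary. For the two pieces with boundary, Theorem~\ref{thm5} gives
\begin{equation}
\Ind(\eth^{\even}_{X_0},\cR^{\even}_0)=\Rind(\cP^{\even}_0,\cR^{\even}_0),\qquad
\Ind(\eth^{\even}_{X_1},(\Id-\cR^{\even}_0))=\Rind(\cP^{\even}_1,(\Id-\cR^{\even}_0)),
\end{equation}
where I must check the hypothesis \eqref{eqn75} holds for $\cP_0,\cP_1$ — this follows from Proposition~\ref{prop11} and \eqref{eqn54} because both Calderon projectors come from the fundamental solution on an invertible ambient manifold, and the defining function $t$ for $Y$ can be taken with opposite signs on the two sides, which is precisely the interchange $\cP_+\leftrightarrow\cP_-$ in Proposition~\ref{prop11}. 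For the closed manifold $X$ I would invoke Bojarski's theorem in the form stated just before the theorem, $\Ind(\eth^{\even}_X)=\Rind(\cP^{\even}_1,(\Id-\cP^{\even}_0)).$ Then the identity to prove collapses to the purely functional-analytic statement
\begin{equation}
\Rind(\cP^{\even}_1,(\Id-\cP^{\even}_0))=\Rind(\cP^{\even}_0,\cR^{\even}_0)+\Rind(\cP^{\even}_1,(\Id-\cR^{\even}_0)),
\end{equation}
which I would derive from the cocycle property of the relative index for tame Fredholm pairs (Theorem~15 of~\cite{Epstein44}, already used in Corollary~\ref{cor3}), together with $\cP_0+\cP_1=\Id$ and Corollary~\ref{cor4}'s sign rule $\Rind((\Id-\cP^{\even}_0),(\Id-\cR^{\even}_0))=-\Rind(\cP^{\even}_0,\cR^{\even}_0)$: writing $\Id-\cP_0^{\even}=\cP_1^{\even}$ and inserting $\cR_0^{\even}$ and $\Id-\cR_0^{\even}$ as intermediate projectors, the cocycle relation gives $\Rind(\cP_1^{\even},\cP_1^{\even})=0$ in the right place and the two pieces recombine.

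The main point that needs care — and where the real content sits — is verifying that all the pairs appearing are genuinely tame Fredholm pairs so that the cocycle formula applies: the pair $(\cP_1^{\even},\Id-\cP_0^{\even})=(\cP_1^{\even},\cP_1^{\even})$ is trivial, but $(\cP^{\even}_0,\cR^{\even}_0)$ and $(\cP^{\even}_1,\Id-\cR^{\even}_0)$ require that the comparison operators $\cT$ built from these pairs be elliptic in the extended Heisenberg calculus. This is exactly the output of Theorem~\ref{thm3} applied separately to $X_0$ (with boundary conditions $\cR_0$, which mixes $\Id-\cR_{j+}$ on the pseudoconcave components and $\cR_{j+}$ on the pseudoconvex ones, matching \eqref{eqn61}) and to $X_1$ (whose boundary is $Y$ with the \emph{reversed} roles of pseudoconvex/pseudoconcave, so that $\Id-\cR_0$ is the correct modified $\dbar$-Neumann condition there — this is the symmetry of $\cT$ in $\cR\leftrightarrow\Id-\cR$ noted in Section~4). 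So the proof is really: (i) choose a common fundamental solution and record $\cP_0+\cP_1=\Id$; (ii) apply Theorem~\ref{thm5} to the two pieces and Bojarski to $X$; (iii) run the cocycle identity for relative indices, using Corollary~\ref{cor4} for the sign. The odd case is identical, and the even case suffices for the statement.
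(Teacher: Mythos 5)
Your overall skeleton (express the two piece indices via Theorem~\ref{thm5}, use Corollary~\ref{cor4} for the sign, and finish with Bojarski's theorem) is the same as the paper's, but the step you lean on to make the remaining identity trivial is not available, and it is exactly where the real content lies. You propose to build $\cP_0$ and $\cP_1$ from a \emph{single} fundamental solution ``on an invertible enlargement of $X$'' and then invoke the jump formula to get $\cP_0+\cP_1=\Id$ on $\Spn\rst{Y}.$ But here $X$ is a \emph{closed} manifold: there is no enlargement of $X$, and $\eth_X$ itself need not be invertible, so no such fundamental solution exists in general. Worse, the relation $\cP_0+\cP_1=\Id$ cannot hold in general even in principle: with it, Bojarski's formula would read $\Ind(\eth^{\even}_X)=\Rind(\cP^{\even}_1,\Id-\cP^{\even}_0)=\Rind(\cP^{\even}_1,\cP^{\even}_1)=0,$ forcing the index of every such closed $X$ to vanish. (The jump formula~\eqref{eqn31} you cite is for a domain $D$ inside an \emph{invertible double} $\hX$; it applies to $X_0\subset X$ only when $\eth_X$ is invertible, in which case the statement is vacuous.) So your reduction, in which both sides of the key identity become $0$ by $\Id-\cP^{\even}_0=\cP^{\even}_1$ and Corollary~\ref{cor4}, proves nothing in the general case.

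In the paper, $\cP_0$ and $\cP_1$ are constructed from the \emph{separate} invertible doubles $X_0\amalg\bX_0$ and $X_1\amalg\bX_1,$ so $\cP_0$ and $\Id-\cP_1$ agree only to leading order and their discrepancy is precisely what carries $\Ind(\eth^{\even}_X).$ The heart of the proof is then the identity
\begin{equation}
\Rind(\cP^{\even}_0,(\Id-\cP^{\even}_1))=
\Rind(\cP^{\even}_0,\cR^{\even}_0)-\Rind((\Id-\cP^{\even}_1),\cR^{\even}_0),
\end{equation}
a cocycle relation between three \emph{distinct} projectors, which is not formal: it is proved by the argument of Proposition 13 of~\cite{Epstein44}, using the extended Heisenberg parametrix $\cU$ and the calculus of tame Fredholm pairs (your correct observation that $\cT$ is symmetric in $\cR\leftrightarrow\Id-\cR$ enters here, handling the reversed convexity seen from $X_1$). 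Your proposal acknowledges tame Fredholm pairs but never engages with this identity, because the false normalization $\cP_0+\cP_1=\Id$ collapses it. To repair the argument, drop the single-fundamental-solution device, keep the independently constructed Calderon projectors, and supply the cocycle identity above along the lines of Proposition 13 of~\cite{Epstein44}.
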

\begin{proof} Let $\cP_0$ and
  $\cP_1$ denote Calderon projectors defined, using the invertible doubles
   $X_0\amalg \bX_0,$ and $X_1\amalg \bX_1,$ on $bX_0$ and $bX_1,$ respectively
   The indices on the right hand side of~\eqref{bjrthm2} can be computed, using
   Theorem~\ref{thm5}, as relative indices:
\begin{equation}
\begin{split}
\Ind(\eth^{\even}_{X_0},\cR^{\even}_0)&=\Rind(\cP^{\even}_0,\cR^{\even}_0)\\
\Ind(\eth^{\even}_{X_1},(\Id-\cR^{\even}_0))
&=\Rind(\cP^{\even}_1,(\Id-\cR^{\even}_0)).
\end{split}
\end{equation}
Corollary~\ref{cor4}  applies to show that
\begin{equation}
\Rind(\cP^{\even}_1,(\Id-\cR^{\even}_0))=-\Rind((\Id-\cP^{\even}_1),\cR^{\even}_0).
\end{equation}
We are left to show that:
\begin{equation}
\Rind(\cP^{\even}_0,(\Id-\cP^{\even}_1))=
\Rind(\cP^{\even}_0,\cR^{\even}_0)-\Rind((\Id-\cP^{\even}_1),\cR^{\even}_0)
\label{eqn115}
\end{equation}
The result then follows from Bojarski's theorem. The proof of~\eqref{eqn115} is
essentially identical to the proof of Proposition 13 in~\cite{Epstein44}.  The
difference here is that in our earlier paper $X_0$ and $X_1$ are both
pseudoconvex, so we worked with $X_0$ and $\bX_1.$ This is why
$(\Id-\cP^{\even}_1)$ appears in the second term of~\eqref{eqn115}, instead of
$\cP^{\even}_1,$ as in equation (204) of~\cite{Epstein4}. The argument
in~\cite{Epstein44} relies on general properties of the parametrix $\cU$ and
indices of tame Fredholm pairs, which are unconnected to the number, or
convexity properties of the boundary components.  The routine modifications
needed to establish~\eqref{eqn115} are left to the reader.
\end{proof}

\section{The non-separating case}
Not yet considered is the case of a non-separating hypersurface $Y$ in a compact
$\spnc$-manifold. We make our usual assumptions regarding the $\spnc$-structure
on $X:$ the structure is induced, in a neighborhood of $Y$ by an almost complex
structure. The almost complex structure defines a contact structure on $Y,$
with respect to which the Levi-form is definite. The manifold with boundary
$X_{01}=X\setminus Y,$ has two boundary components, $Y_0, Y_1,$ both isomorphic
to $Y.$ For simplicity we limit ourselves to the case that $Y$ is connected,
though the results proved here clearly extend to the case that $Y$ has several
components.

Following our practice above, we label the components so that $Y_1$ is a
strictly pseudoconvex boundary and $Y_0,$ a strictly pseudoconcave boundary.
Let $\cS$ be a generalized Szeg\H o projector defined on $Y,$ and $\cR$ the
modified pseudoconvex $\dbar$-Neumann boundary operator it defines.  The
boundary value problems on $X\setminus Y,$
$(\eth^{\eo}_{X_{01}},[(\Id-\cR^{\eo}),\cR^{\eo}])$ are Fredholm. By analogy to
the previous results we would expect that the index of this operator computes
the index of the closed manifold,
\begin{equation}
\Ind(\eth^{\eo}_{X})=\Ind(\eth^{\eo}_{X_{01}},[(\Id-\cR^{\eo}),\cR^{\eo}]).
\end{equation}

To prove this we use a device suggested by~\cite{DaiZhang}: We attach a collar
$\overline{Y\times [0,1]},$ to the boundary of $X_{01}.$ To do this we first flatten
the $\spnc$-structure in a neighborhood of $bX_{01}.$ This does not change
$\Ind(\eth^{\eo}_{X_{01}},[(\Id-\cR^{\eo}),\cR^{\eo}]),$ and 
\begin{equation}
\Ind(\eth_{X})=\Ind(\eth_{X_{01}\amalg \overline{Y\times [0,1]}}).
\end{equation}
Thus Theorem~\ref{thm9} implies the following formula for the index
of $\eth^{\eo}_{X}:$
\begin{multline}
\Ind(\eth^{\eo}_{X})=
\Ind(\eth^{\eo}_{X_{01}},[(\Id-\cR^{\eo}),\cR^{\eo}])-\\
\Ind(\eth^{\eo}_{Y\times[0,1]},[(\Id-\cR^{\eo}),\cR^{\eo}]).
\end{multline}
  We are therefore
reduced to showing that
\begin{equation}
\Ind(\eth^{\eo}_{Y\times[0,1]},[(\Id-\cR^{\eo}),\cR^{\eo}])=0.
\end{equation}
This can easily be established by a direct calculation.

Let $\theta$ be a one-form defining the contact structure $H$ on $Y$ and $J$ a
complex structure on the fibers of $H$ so that $\cL_J=d\theta(\cdot,J\cdot)$ is
positive definite on $H\times H.$  $T$ denotes the Reeb vector field:
$\theta(T)=1, i_Td\theta=0.$ We use $\cL_J$ to define the metric on $H$ and
declare $T$ to be orthogonal to $H$ and of unit length. With this data the
$\spnc$-bundle on $Y$ satisfies
\begin{equation}
\Spn_Y\simeq \bigoplus\limits_{q=0}^{n-1} \Lambda_b^{0,q} Y.
\end{equation}
We realize $\Lambda_b^{0,1}Y$ as a subbundle of $T^*Y\otimes\bbC$ by requiring
the restriction to $T^{1,0}_bY\oplus \{\bbC T\}$ to vanish. Let $\rho$ denote a
coordinate on $[0,1].$ We extend the almost complex structure to $Y\times
[0,1]$ be defining $J \pa_\rho=T,$ and the metric, by declaring $\pa_\rho$ to
have unit length, and to be orthogonal to $TY.$ 

The spin-bundle on $Y\times [0,1]$ is isomorphic to $\Spn_{Y\times
[0,1]}=\bigoplus\Lambda^{0,q}(Y\times [0,1]),$ with the obvious splitting into
even and odd forms. Clearly $\Spn_{Y},$ pulled back to $Y\times [0,1],$ is
canonically a subbundle of $\Spn_{Y\times [0,1]}$ under these
identifications. We can write a section of $\Spn_{Y\times [0,1]}$ in the form
\begin{equation}
\sigma=\sigma_t(\rho)+\dbar\rho\wedge\sigma_n(\rho),
\end{equation}
where $\sigma_t(\rho),\sigma_n(\rho)$ are 1-parameter families of sections of
$\Spn_Y,$ that is elements of $\CI([0,1];\CI(\Spn_Y)).$ If $\sigma$ is a
section of $\Spn^{\even}_{Y\times [0,1]},$ then $\sigma_t$ is a 1 parameter
family of even-degree sections of $\Spn_Y,$ and $\sigma_n$ is a 1 parameter
family of odd-degree sections of $\Spn_Y.$ Analogous statements are true for
sections of $\Spn^{\odd}_{Y\times [0,1]}.$ The isomorphism of
$\Spn^{\even}_{Y\times [0,1]}$ with $\Spn_Y$ just takes
$\sigma^{\even}\to\sigma_t+\sigma_n.$ 

Under this identification the operator $\eth^{\even}_{Y\times [0,1]}$ becomes
\begin{equation}
\eth^{\even}_{Y\times [0,1]}\leftrightarrow \pa_{\rho}+B,
\end{equation}
where $B$ is the self-adjoint Dirac-operator on $Y.$ As $\cL_J$ is
positive definite, the end $Y\times \{1\}$ is strictly pseudoconvex and
$Y\times \{1\}$ is strictly pseudoconcave. The boundary condition
$[(\Id-\cR^{\even}),\cR^{\even}]$ becomes:
\begin{equation}
\begin{split}
\cS\sigma_t^{0,0}(1)=0&\quad \sigma_n(1)=0\\
(\Id-\cS)\sigma_t^{0,0}(0)=0&\quad \sigma_t(0)=0.
\end{split}
\end{equation}
The odd-part $(\eth^{\odd}_{Y\times [0,1]},[(\Id-\cR^{\odd}),\cR^{\odd}])$ is
the adjoint of $(\eth^{\even}_{Y\times [0,1]},[(\Id-\cR^{\even}),\cR^{\even}])$
and so, under these identifications, we have
\begin{equation}
\eth^{\odd}_{Y\times [0,1]}\leftrightarrow -\pa_{\rho}+B,
\end{equation}
and
the boundary condition, $[(\Id-\cR^{\odd}),\cR^{\odd}]$ becomes:
\begin{equation}
\begin{split}
(\Id-\cS)\sigma_t^{0,0}(1)=0&\quad \sigma_t(1)=0\\
\cS\sigma_t^{0,0}(0)=0&\quad \sigma_n(0)=0.
\end{split}
\end{equation}
With these preliminaries, it is now easy to see that the kernel and cokernel of
$(\eth^{\even}_{Y\times [0,1]},[(\Id-\cR^{\even}),\cR^{\even}])$ are isomorphic and
therefore: 
\begin{equation}
\Ind(\eth^{\even}_{Y\times [0,1]},[(\Id-\cR^{\even}),\cR^{\even}])=0.
\label{eqn125}
\end{equation}
Suppose that $\sigma(\rho)\in\CI([0,1];\CI(\Spn_Y))$ represents an
element of the null-space of this operator. Clearly
$\tsigma(\rho)=\sigma(1-\rho),$ then belongs to the null-space of
$(\eth^{\odd}_{Y\times [0,1]},[(\Id-\cR^{\odd}),\cR^{\odd}]).$ As this is the
adjoint operator, the assertion of~\eqref{eqn125} follows immediately.

This completes the proof of the following theorem:
\begin{theorem}
Let $X$ be a compact $\spnc$-manifold and $Y\hookrightarrow X,$ a
non-separating hypersurface. Suppose that the $\spnc$-structure is induced, in
a neighborhood of $Y$ by an almost complex structure, with respect to which $Y$
is a contact manifold with a definite Levi-form. Let $X_{01}=X\setminus Y,$ and
$\cS$ be a generalized Szeg\H o projector defined on $Y,$ with $\cR$ the
modified pseudoconvex $\dbar$-Neumann boundary operator it defines. We have
that:
\begin{equation}
\Ind(\eth^{\eo}_{X})=\Ind(\eth^{\eo}_{X_{01}},[(\Id-\cR^{\eo}),\cR^{\eo}]).
\end{equation}
\end{theorem}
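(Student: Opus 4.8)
The plan is to deduce the non-separating statement from the separating Bojarski-type formula of Theorem~\ref{thm9} by cutting $X$ along $Y$ and regluing a cylinder. First I would flatten the $\spnc$-structure in a collar neighbourhood of $bX_{01}=Y_0\amalg Y_1$; since the parametrix construction underlying all of the index formulas (Theorem~\ref{thm3}) is purely microlocal, this deformation changes neither $\Ind(\eth^{\eo}_{X_{01}},[(\Id-\cR^{\eo}),\cR^{\eo}])$ nor the closed-manifold index. After flattening, gluing $\overline{Y\times[0,1]}$ back onto the two boundary components of $X_{01}$ produces a $\spnc$-manifold $\spnc$-isomorphic to $X$ (using Lemma 8 of~\cite{Epstein44} to keep an almost complex structure on the added cylinder), so $\Ind(\eth_X)=\Ind(\eth_{X_{01}\amalg\overline{Y\times[0,1]}})$; this is the device of~\cite{DaiZhang}.

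Next I would view $Y_0\amalg Y_1$ as a separating hypersurface of the glued closed manifold, with $X_{01}$ on one side (on which $Y_0$ is pseudoconcave and $Y_1$ pseudoconvex) and the cylinder $\overline{Y\times[0,1]}$ on the other. Theorem~\ref{thm9}, applied with the given generalized Szeg\H o projector $\cS$ on each copy of $Y$, then yields
\[
\Ind(\eth^{\eo}_X)=\Ind(\eth^{\eo}_{X_{01}},[(\Id-\cR^{\eo}),\cR^{\eo}])-\Ind(\eth^{\eo}_{Y\times[0,1]},[(\Id-\cR^{\eo}),\cR^{\eo}]),
\]
the minus sign coming from the orientation reversal of the cylinder relative to $X_{01}$ (reversing orientation interchanges $\Spn^{\even}$ and $\Spn^{\odd}$). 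Hence the theorem reduces to the single vanishing statement
\[
\Ind(\eth^{\eo}_{Y\times[0,1]},[(\Id-\cR^{\eo}),\cR^{\eo}])=0.
\]

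To prove this I would work in the explicit product model: pick a contact form $\theta$ on $Y$ with compatible $J$ making $\cL_J$ positive definite, extend $J$ to $Y\times[0,1]$ by $J\pa_\rho=T$ (the Reeb field), and use the identifications $\Spn_{Y\times[0,1]}\simeq\bigoplus_q\Lambda^{0,q}(Y\times[0,1])$ and $\sigma=\sigma_t(\rho)+\dbar\rho\wedge\sigma_n(\rho)$. Under the resulting isomorphism $\Spn^{\even}_{Y\times[0,1]}\simeq\Spn_Y$, $\sigma^{\even}\mapsto\sigma_t+\sigma_n$, the Dirac operator becomes $\pa_\rho+B$ with $B$ the self-adjoint Dirac operator on $Y$, and $[(\Id-\cR^{\even}),\cR^{\even}]$ unwinds into the explicit conditions $\cS\sigma^{0,0}_t(1)=0$, $\sigma_n(1)=0$, $(\Id-\cS)\sigma^{0,0}_t(0)=0$, $\sigma_t(0)=0$; likewise $\eth^{\odd}_{Y\times[0,1]}\leftrightarrow-\pa_\rho+B$ with the complementary conditions, and it is the formal adjoint of the even problem. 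The key observation is that $\sigma(\rho)\mapsto\sigma(1-\rho)$ carries the null space of the even boundary value problem isomorphically onto the null space of its adjoint, so kernel and cokernel of $(\eth^{\even}_{Y\times[0,1]},[(\Id-\cR^{\even}),\cR^{\even}])$ have equal dimension and the index vanishes; the odd case is identical. Feeding this back into the displayed formula completes the proof.

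I expect the genuine work to be bookkeeping rather than anything conceptual: one must check that flattening the $\spnc$-structure near $bX_{01}$ really leaves the sub-elliptic index intact, that regluing the cylinder reproduces $X$ up to $\spnc$-isomorphism, and that the chirality and orientation conventions entering Theorem~\ref{thm9} produce the stated sign in front of the cylinder term. The only honest computations are the explicit unwinding of $[(\Id-\cR),\cR]$ on $Y\times[0,1]$ into the four scalar/form conditions above and the verification that $\rho\mapsto1-\rho$ intertwines the even problem with its adjoint; both are short. Since the cylinder index vanishes regardless, an error in the sign would not actually affect the conclusion, which is a reassuring sanity check.
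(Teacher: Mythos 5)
Your proposal follows essentially the same route as the paper: the Dai--Zhang device of flattening near $bX_{01}$ and regluing the cylinder $\overline{Y\times[0,1]}$, reduction via Theorem~\ref{thm9} to the vanishing of $\Ind(\eth^{\eo}_{Y\times[0,1]},[(\Id-\cR^{\eo}),\cR^{\eo}])$, and the explicit product-model computation with $\eth^{\even}\leftrightarrow\pa_\rho+B$ and the reflection $\rho\mapsto 1-\rho$ identifying kernel and cokernel. This matches the paper's argument in both structure and detail, so the proposal is correct.
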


\section{Stein fillings for 3-manifolds}
We now show how to use the gluing results for the relative index to prove our
main result, Theorem~\ref{thm1}. For this result we assume that $(Y,H)$ is a
compact 3-dimensional, contact manifold with a strictly pseudoconvex
CR-structure, $T^{0,1}_bY,$ supported by $H,$ that arises as the boundary of a
strictly pseudoconvex complex manifold, $X_+.$ Let $\cS_0$ denote the classical
Szeg\H o projector onto boundary values of holomorphic functions defined on
$X_+.$ In addition we assume that $(Y, T^{0,1}_bY)$ arises as the pseudo{\it
  concave} boundary of a smooth complex manifold with boundary $X_-,$ and that
$X_-$ contains a positive, smooth, compact holomorphic curve, $Z.$ By positive
we mean that there is a strictly plurisubharmonic exhaustion function,
$\varphi$ defined in $X_-\setminus Z,$ so that $bX_-=\varphi^{-1}(0),$ and
$\varphi(x)$ tends to infinity as $x\to Z.$ We extend $\varphi$ smoothly to
$X_+$ so that $X_+=\varphi^{-1}((-\infty,0]).$ For $c\in\bbR,$ we let
\begin{equation}
X_{c}=\varphi^{-1}((-\infty,c]).
\end{equation}

\begin{proof}[Proof of Theorem~\ref{thm1}]
The hypothesis of the theorem includes the requirement that
$H^2_c(X_-;\Theta)=0.$ The basic result of Kiremidjian implies that any
sufficiently small perturbation, $\omega,$ of the CR-structure on $bX_-$ can be
extended to define an integrable deformation, $\Omega,$ of the complex
structure on $X_-.$ If we choose a sufficiently large $c\in\bbR,$ then
$Y_c=\varphi^{-1}(c)$ is the strictly pseudoconcave boundary of small tubular
neighborhood of $Z.$ The manifold $Y_c$ is diffeomorphic to a circle bundle in
the normal bundle to $Z,$ $NZ=T^{1,0}X_-\rst_{Z}/T^{1,0}Z.$ Indeed, it is not
difficult to show that the contact structure on $Y_c$ is isotopic to the
standard $U(1)$-invariant contact structure on the unit circle in $NZ$ defined
by a metric on $NZ$ with positive curvature.

The $\dbar$-operator defined by the deformed complex structure,
$\dbar_{\Omega}$ satisfies:
\begin{equation}
\dbar_{\Omega}=\dbar_0+P_{\Omega},
\end{equation}
where $P_\Omega$ is a first order operator with smooth coefficients bounded in
the $\cC^1$-topology by $C\|\Omega\|_{\cC^k},$ for some $C\in\bbR,k\in\bbN.$
Using the Banach space version of Kiremidjian's theorem proved
in~\cite{EpsteinHenkin3}, it follows that for another $C', k'$ these
coefficients are bounded in the $\cC^1$-topology by $C'\|\omega\|_{\cC^{k'}}.$
If we fix a $c\in\bbR,$ as above, then, provided that $\|\omega\|_{\cC^{k'}}$
is sufficiently small, the exhaustion function $\varphi$  remains strictly
plurisubharmonic, with respect to $\dbar_{\Omega},$ on $X_c\cap X_-.$

Now suppose that the deformed CR-structure on $Y$ is fillable, and so it can
also be realized as the boundary of strictly pseudoconvex complex manifold,
$X_+'.$ We let $X'=X_+'\amalg X_-',$ where $X_-'$ denotes $X_-$ with the
deformed complex structure defined by $\Omega.$.  Let $\cS_1$ denote the Szeg\H
o projector onto the boundary values of holomorphic functions defined on
$X_+',$ $\tcS_0$ the Szeg\H o projector on $bX_c$ with respect to the original
complex structure, and $\tcS_1$ the Szeg\H o projector on $bX_c',$ with respect
to the deformed complex structure. To prove the theorem we show that
\begin{equation}
\Rind(\cS_0,\cS_1)=\Rind(\tcS_0,\tcS_1).
\label{eqn99.0}
\end{equation}
From the hypothesis we know that $\deg NZ\geq 2g-1,$ where $g$ is the genus of
$Z.$ Thus $bX_c$ is covered by the Theorem of Stipsicz: Amongst Stein fillings
of a circle bundle of degree $d$ over a surface with genus $g,$ with the
standard contact structure, if $d\geq 2g-1,$ then the signature and Euler
characteristic are bounded, see~\cite{stipsicz}.  Using the formula
from~\cite{Epstein44}:
\begin{multline}
\Rind(\tcS_0,\tcS_1)=
\dim H^{0,1}(X_c)-\dim H^{0,1}(X_c')+\\
\frac{\sig(X_c)-\sig(X_c')+\chi(X_c)-\chi(X_c')}{4},
\end{multline}
we conclude that $\Rind(\tcS_0,\tcS_1)$ assumes only finitely many values. Note
that the Stipsicz result has no smallness assumption on the size of the
perturbation of the CR-structure.

We let $\cR_0,\cR_1,\tcR_0,\tcR_1$ denote the modified pseudoconvex
$\dbar$-Neumann boundary conditions defined by these Szeg\H o
projectors. Because we have strictly plurisubharmonic exhaustion functions
defined on the collars,
\begin{equation}
X_{c-}=X_c\cap X_-\text{ and }X_{c-}'=X_c'\cap X_-',
\end{equation}
we can apply Theorem~\ref{thm8} to conclude that
\begin{equation}
\begin{split}
\Ind(\eth_{X_c},\tcR_0)&=\Ind(\eth_{X_+},\cR_0)\\
\Ind(\eth_{X_c'},\tcR_1)&=\Ind(\eth_{X_+'},\cR_1).
\end{split}
\label{eqn99}
\end{equation}

We can add a collars to both pairs, $X_c, \bX'_c$ and $X_+,\bX_+',$ to
obtain compact $\spnc$-manifolds, $\hX_c\simeq X_c\amalg \bX'_c,$ $\hX_+\simeq
X_+\amalg \bX_+',$ respectively. Theorem 9 of~\cite{Epstein44} applies to show
that
\begin{equation}
\begin{split}
\Rind(\cS_0,\cS_1)&=\Ind(\eth^{\even}_{\hX_+})-\Ind(\eth_{X_+},\cR_0)+
\Ind(\eth_{X_+'},\cR_1),\\
\Rind(\tcS_0,\tcS_1)&=\Ind(\eth^{\even}_{\hX_c})-\Ind(\eth_{X_c},\tcR_0)+
\Ind(\eth_{X_c'},\tcR_1).
\end{split}
\end{equation}
Combining these formul{\ae} with those in~\eqref{eqn99} we see that
\begin{equation}
\Rind(\cS_0,\cS_1)-\Rind(\tcS_0,\tcS_1)=\Ind(\eth^{\even}_{\hX_+})-
\Ind(\eth^{\even}_{\hX_c}).
\end{equation}
Finally we can deform the $\spnc$-structure on $\hX_c$ to obtain a
$\spnc$-manifold $\hX_c'\simeq X_+\amalg X_{c-}\amalg\bX_{c-}\amalg X_+''.$
Here $X_+''$ is the $\spnc$-manifold, $X_+'$ with a collar attached deforming
the $\spnc$-structure on $bX_+'$ to that defined on $bX_+.$
Clearly this deformation does not change
$\Ind(\eth^{\even}_{\hX_c}),$ moreover $X_+\amalg X_+''\simeq \hX_+.$

The excision theorem of Gromov and Lawson (see Chapter 10 of~\cite{BBW})
applies to show that
\begin{equation}
\begin{split}
\Ind(\eth^{\even}_{\hX_c'})&=\Ind(\eth^{\even}_{X_+\amalg X_+''})+
\Ind(\eth^{\even}_{X_{c-}\amalg\bX_{c-}})\\
&=\Ind(\eth^{\even}_{\hX_+}).
\end{split}
\end{equation}
The second term vanishes because $X_{c-}\amalg\bX_{c-}$ is an invertible
double. This completes the proof of~\eqref{eqn99.0}, and thereby the proof of the
theorem. 
\end{proof}

One might reasonably enquire when the geometric hypotheses in
equation~\eqref{eqn18} hold.  A simple case to consider is that of line bundle
over a Riemann surface, $L\to \Sigma.$  Let $g$ denote the genus of $\Sigma$
and $d=\deg L.$  In~\cite{EpsteinHenkin2} we compute $H^2_c(X_;\Theta),$ where
$X_-$ is a neighborhood of the zero section in $L.$ We use the $S^1$-action to
decompose $H^2_c(X_-;\Theta)$ into Fourier components:
\begin{equation}
H^2_c(X_-;\Theta)\simeq\bigoplus\limits_{k=-1}^{\infty}
H^2_c(X_-;\Theta)_{(k)}.
\end{equation}
With $\kappa$ the canonical bundle of $\Sigma,$ the Fourier components fit into
long exact sequences:
\begin{equation}
[H^2_c(X_-;\Theta)_{(-1)}]'\simeq H^0(\Sigma;\kappa^2\otimes L^{-1}),
\end{equation}
for $k\geq 0:$
\begin{equation}
\begin{split}
H^0(\Sigma;\kappa\otimes L^{-k-2})&\longrightarrow[H^2_c(X_-;\Theta)_{(k)}]'
\longrightarrow\\
 &H^0(\Sigma;\kappa^2\otimes L^{-k-2})\longrightarrow
H^0(\Sigma;L^{k+2})\longrightarrow\cdots
\end{split}
\end{equation}
If $\deg L\geq 3g-3,$ then $\deg\kappa^2\otimes L^{-1}\leq g-1,$ and
generically
\begin{equation}
[H^2_c(X_-;\Theta)_{(-1)}]'\simeq H^0(\Sigma;\kappa^2\otimes L^{-1})=0,
\end{equation}
see~\cite{GriffithsHarris}. The other Fourier components are easily seen to
vanish. This improves upon our earlier result where we proved a similar bound on the
relative index assuming that $d>4g-3.$ This proves the following:
\begin{proposition} Suppose that $L\to \Sigma$ is a  line bundle over a surface,
  with $\deg L$ at least $3g-3,$ where $g$ is the genus of $\Sigma.$ Let $\tL$
  denote the compactification of $L$ obtained by adding the ``section at
  $\infty.$'' For generic complex structures on $L$ and $\Sigma,$ the set of
  small embeddable perturbations of the CR-structure on a strictly pseudoconvex
  hypersurface, $Y\subset\tL,$ such that the zero section of $L$ lies in the
  pseudoconcave component of $\tL\setminus Y,$ is closed in the $\CI$-topology.
\end{proposition}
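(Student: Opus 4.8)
The plan is to recognize the configuration of the Proposition as an instance of Theorem~\ref{thm1} and then to verify, for the generic choice, the two numerical hypotheses in~\eqref{eqn18}. Let $X_+$ be the strictly pseudoconvex component of $\tL\setminus Y$ and $X_-$ the strictly pseudoconcave one, so that $Z$, the zero section of $L$, lies in $X_-.$ Since the normal bundle of the zero section is $NZ=L\rst_{Z}$, it is positive for the generic $L$ (necessarily $\deg L>0$); consequently $Z$ is a positive, compact, holomorphic curve in $X_-$ in the sense of Theorem~\ref{thm1}, the required strictly plurisubharmonic exhaustion of $X_-\setminus Z$ being built from a metric on $L$ of positive curvature. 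Moreover $\deg NZ=\deg L\ge\max(1,3g-3)\ge 2g-1,$ so the second condition in~\eqref{eqn18} holds.

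It remains to verify $H^2_c(X_-;\Theta)=0$ for generic complex structures, and here I would simply assemble the $S^1$-equivariant computation of~\cite{EpsteinHenkin2} recalled just above. By the Fourier decomposition it suffices to kill each $[H^2_c(X_-;\Theta)_{(k)}]'.$ For $k\ge 0$ this group lies, in the exact sequence displayed above, between $H^0(\Sigma;\kappa\otimes L^{-k-2})$ and $H^0(\Sigma;\kappa^2\otimes L^{-k-2});$ since $\deg L\ge\max(1,3g-3),$ the degrees $2g-2-(k+2)\deg L$ and $4g-4-(k+2)\deg L$ of these two bundles are negative, so both $H^0$'s vanish and hence so does the component. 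For $k=-1$ one has $[H^2_c(X_-;\Theta)_{(-1)}]'\simeq H^0(\Sigma;\kappa^2\otimes L^{-1}),$ and $\deg(\kappa^2\otimes L^{-1})=4g-4-\deg L\le g-1;$ this $H^0$ vanishes when the degree is negative, and for a generic complex structure on $\Sigma$ and a generic $L$ of the prescribed degree it vanishes also in the range $0\le\deg\le g-1,$ since a generic line bundle of that degree is nonspecial (see~\cite{GriffithsHarris}). Thus $H^2_c(X_-;\Theta)=0,$ and~\eqref{eqn18} is verified in full.

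With both hypotheses of Theorem~\ref{thm1} in hand, that theorem furnishes a constant $M$ with $|\Rind(\cS_0,\cS_1)|\le M$ for every sufficiently small embeddable deformation of the CR-structure on $Y,$ where $\cS_0$ is the classical Szeg\H o projector of $(Y,T^{0,1}_bY)$ and $\cS_1$ that of the deformed, embeddable structure. As recorded in the Introduction, such a uniform bound on the relative index to $\cS_0$ implies that the set of small embeddable deformations of $(Y,T^{0,1}_bY)$ is closed in the $\CI$-topology, which is the assertion of the Proposition. The step I expect to carry the weight is the vanishing $H^2_c(X_-;\Theta)=0$: the degree estimates that kill the components with $k\ge 0$ are routine once one has the explicit description of $H^2_c$ from~\cite{EpsteinHenkin2}, but the component with $k=-1$ is precisely where the genericity assumption enters, through the nonspeciality of a generic line bundle of degree at most $g-1.$
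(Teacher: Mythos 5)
Your proposal is correct and follows essentially the same route as the paper: verify the hypotheses~\eqref{eqn18} by the $S^1$-Fourier decomposition of $H^2_c(X_-;\Theta)$ (genericity entering only through $H^0(\Sigma;\kappa^2\otimes L^{-1})$, whose degree is $\leq g-1$), apply Theorem~\ref{thm1} to get the uniform bound on the relative index, and then conclude closedness. The paper makes that final step precise by invoking Theorem E of~\cite{Epstein} (the set of embeddable structures with relative index bounded by a fixed constant is closed in the $\CI$-topology), which is the result underlying the implication you quote from the Introduction; your explicit checks that $\deg NZ\geq 2g-1$ and that the $k\geq 0$ Fourier components vanish are only spelled out more fully than in the paper.
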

\begin{proof} The hypersurface $Y$ bounds a strictly pseudoconcave domain,
  $X_-,$ in $\tL,$ which contains the zero section. The genericity assumption
  implies that the cohomology group $H^2_c(X_-;\Theta)$ vanishes. Hence we can
  apply Theorem~\ref{thm1} to conclude that the relative index between the
  Szeg\H o projector on $Y,$ and any small embeddable perturbation is uniformly
  bounded. Using Theorem E in~\cite{Epstein} we complete the proof of the
  Proposition.
\end{proof}

\begin{remark} This result generalizes Lempert's Theorem 1.1
  from~\cite{Lempert2}, covering strictly pseudoconvex hypersurfaces in
   $\bbC^2\subset\bbP^2,$ in that the hypersurface is not assumed to be the
   boundary of a tubular neighborhood of the zero section of $L.$ For
   boundaries of small tubular neighborhoods we have a stronger result: the set
   of all embeddable perturbations is closed in the $\CI$-topology provided
   that $\deg L>2g-2,$ see~\cite{Epstein44}. In the latter case there is no
   smallness hypothesis.
\end{remark}

\end{document}